\newtheorem{theorem}{Theorem}[section]
\newtheorem{lemma}[theorem]{Lemma}
\newtheorem{proposition}[theorem]{Proposition}
\newtheorem{corollary}[theorem]{Corollary}
\theoremstyle{definition}
\newtheorem{definition}[theorem]{Definition}
\newtheorem{setting}[theorem]{Setting}
\newtheorem{example}[theorem]{Example}
\newtheorem{remark}[theorem]{Remark}
\numberwithin{equation}{section}
\renewcommand{\phi}{\varphi}
\newcommand{\ep}{\varepsilon}
\newcommand{\Coker}{\operatorname{Coker}}
\renewcommand{\d}{\operatorname{d}}
\newcommand{\dH}{\operatorname{d_{\textrm{H}}}}
\newcommand{\diam}{\operatorname{diam}}
\renewcommand{\Im}{\operatorname{Im}}
\newcommand{\Ker}{\operatorname{Ker}}
\newcommand{\N}{\mathbb{N}}
\newcommand{\Z}{\mathbb{Z}}
\newcommand{\R}{\mathbb{R}}
\newcommand{\G}{\mathcal{G}}
\renewcommand{\H}{\mathcal{H}}
\newcommand{\V}{\mathcal{V}}
\newcommand{\E}{\mathcal{E}}
\newcommand{\W}{\mathcal{W}}
\newcommand{\F}{\mathcal{F}}
\title{Long exact sequences of homology groups of \'etale groupoids}
\author{Hiroki Matui \\
Graduate School of Science \\
Chiba University \\
Inage-ku, Chiba 263-8522, Japan
}
\date{}
\begin{document}
\maketitle

\begin{abstract}
When a pair of \'etale groupoids $\mathcal{G}$ and $\mathcal{G}'$ 
on totally disconnected spaces are related in some way, 
we discuss the difference of their homology groups. 
More specifically, we treat two basic situations. 
In the subgroupoid situation, $\mathcal{G}'$ is assumed to be 
an open regular subgroupoid of $\mathcal{G}$. 
In the factor groupoid situation, we assume that 
$\mathcal{G}'$ is a quotient of $\mathcal{G}$ and 
the factor map $\mathcal{G}\to\mathcal{G}'$ is proper and regular. 
For each, we show that 
there exists a long exact sequence of homology groups. 
We present examples 
which arise from SFT groupoids and hyperplane groupoids. 
\end{abstract}

\section{Introduction}

We investigate homology groups of \'etale groupoids 
on totally disconnected spaces. 
\'Etale groupoids (called $r$-discrete groupoids in \cite{Re_text}) 
provide us a natural framework for unified treatment of 
various topological dynamical systems. 
The homology groups of \'etale groupoids were first 
introduced and studied by M. Crainic and I. Moerdijk \cite{CM00crelle}. 
Later, in \cite{Ma12PLMS,Ma15crelle,Ma16Adv}, 
when the underlying space is totally disconnected, 
the homology groups were computed for some specific examples, 
and the connection with topological full groups 
as well as $K$-theory of $C^*$-algebras was discussed. 
See \cite{FKPS19Munster,Or20JNG,Sc20ETDS,Yi20BullAust,
PY2006arXiv,BDGW2104arXiv} 
for further developments. 

The goal of this paper is the computation of homology groups 
of \'etale groupoids on totally disconnected spaces. 
To be more specific, 
we will be concerned with a pair of \'etale groupoids 
which are related in some way, 
and look at the difference of homology groups of the two groupoids. 
Loosely speaking, we may call it the relative homology theory. 
Analogous results for $K$-theory of $C^*$-algebras have been already 
obtained in \cite{Pu97JOP,Pu98Rocky,Pu21Munster} by I. F. Putnam. 
For a groupoid $\G$, 
one can associate its reduced groupoid $C^*$-algebra $C^*_r(\G)$. 
Given a pair of groupoids $\G$ and $\G'$ satisfying certain conditions, 
we get an inclusion $C^*_r(\G')\hookrightarrow C^*_r(\G)$ 
of $C^*$-algebras. 
By using his excision theorem, Putnam established a method of 
computing the relative $K$-groups $K_i(C^*_r(\G');C^*_r(\G))$. 
More precisely, 
he constructed another pair of groupoids $\H$ and $\H'$ 
which are tractable compared with $\G$ and $\G'$, 
and then showed that $K_i(C^*_r(\G');C^*_r(\G))$ are 
isomorphic to $K_i(C^*_r(\H');C^*_r(\H))$. 
In the present paper, 
we discuss the homology theoretic counterpart. 
Namely, we will prove that the two pairs of groupoids 
have the isomorphic relative homology groups 
(Proposition \ref{iso_sub} and \ref{iso_factor}). 
As a consequence, via this isomorphism, 
the long exact sequence coming from $\G$ and $\G'$ is 
linked to that coming from $\H$ and $\H'$. 
See Theorem \ref{LES_sub} and \ref{LES_factor} 
for the precise statement. 

In the same way as Putnam's results, 
we deal with two situations: 
one is the subgroupoid situation and 
the other is the factor groupoid situation. 
By using simple examples coming from Cantor minimal systems 
(minimal $\Z$-actions on Cantor sets), 
we would like to explain the basic idea behind the proof 
in the two situations. 
Let $\phi:\Z\curvearrowright X$ be a Cantor minimal system and 
consider the transformation groupoid $\G=\Z\ltimes X$. 
It is well-known that 
\[
H_0(\G)\cong C(X,\Z)/\{f-f\circ\phi_1\mid f\in C(X,\Z)\},\quad 
H_1(\G)=\Z
\]
and $H_n(\G)=0$ for $n\geq2$. 
Fix a point $y\in X$ and consider the subgroupoid 
\[
\G_y:=\G\setminus\left\{(m,\phi_n(y))\mid 
(n{\leq}0\wedge m{+}n{>}0)\vee(n{>}0\wedge m{+}n{\leq}0)\right\}. 
\]
It is also known that $\G_y$ is a (so-called) AF groupoid, 
and $H_0(\G_y)=H_0(\G)$ and $H_n(\G_y)=0$ for $n\geq1$. 
The two groupoids $\G$ and $\G_y$ differ only on the orbit of $y$. 
So, let us look only at this single orbit. 
Then the restriction of $\G$ to the orbit is 
just a transitive action of $\Z$ on $\Z$, 
which yields the difference between $H_1(\G)=\Z$ and $H_1(\G_y)=0$. 
Next, let us turn to an example of factor groupoids. 
Suppose that there exists an asymptotic pair $x,x'\in X$, i.e. 
\[
\lim_{\lvert n\rvert\to\infty}\d(\phi_n(x),\phi_n(x'))=0. 
\]
By glueing the points $\phi_n(x),\phi_n(x')$ and 
identifying them with a single point for each $n$, 
we obtain another Cantor minimal system $(Y,\psi)$. 
There exists a canonical factor map $\pi:(X,\phi)\to(Y,\psi)$. 
It is known that $H_0(\G)$ is isomorphic to 
the direct sum of $H_0$ of $(Y,\psi)$ and $\Z$ 
(see \cite{GPS01MathScand} for details). 
Again, the two dynamical systems differ only on a single $\psi$-orbit, 
and the $\Z$ summand of $H_0(\G)$ arises from this orbit. 
In both cases (subgroupoid and factor groupoid), 
the difference of the homology groups of two groupoids can be 
deduced from the restriction of the $\Z$-action to a single orbit. 
The issue here is the topology of that single orbit. 
Although this is a subset of the given Cantor set, 
the use of the relative topology is not a good idea. 
Since every orbit is dense, 
the relative topology on it cannot be locally compact. 
The correct topology in this case is the discrete topology. 
In a more general setting, we need to seek the right topology 
on the restriction of the given \'etale groupoid 
so that it becomes an \'etale groupoid. 
The regularity assumptions 
(Definition \ref{regularinclusion} and Definition \ref{defofregular}) 
enables us to resolve this technical issue. 

We would like to also mention 
a homology theory for Smale spaces defined by Putnam \cite{Pu14Memoir}. 
For an irreducible Smale space $(X,\phi)$, 
he introduced the stable homology groups $H^s_*(X,\phi)$ and 
the unstable homology groups $H^u_*(X,\phi)$ 
(\cite[Definition 5.5.2]{Pu14Memoir}). 
Loosely speaking, these groups are computed 
via a certain factor map 
from a shift of finite type $(\Sigma,\sigma)$ to $(X,\phi)$. 
In the case that the stable sets are totally disconnected, 
we may arrange that the factor map is $s$-bijective, 
which induces a groupoid inclusion $\G'\hookrightarrow\G$ 
just like the one considered in this paper. 
Furthermore, it is known that 
$H^s_*(X,\phi)$ coincides with the homology groups $H_*(\G)$ 
(\cite[Theorem 4.1]{PY2104arXiv}). 
This shows that homology computations associated with inclusions 
have been considered before the present paper. 

The paper is organized as follows. 
In Section 2, we set up notation and 
review some basic facts on \'etale groupoids and their homology groups. 
In Section 3, we deal with the subgroupoid situation, 
i.e.\ the case that 
$\G'$ is a subgroupoid of $\G$ and 
the inclusion $\G'\subset\G$ is regular 
(Definition \ref{regularinclusion}). 
In this setting, 
the chain complex $\mathcal{C}(\G',A)$ is naturally 
identified with a subcomplex of $\mathcal{C}(\G,A)$. 
Section 4 is devoted to the factor groupoid situation, 
i.e.\ the case that 
$\pi:\G\to\G'$ is a regular proper homomorphism 
(Definition \ref{defofregular}). 
In this setting, 
there exists an injective homomorphism 
$\pi^*:\mathcal{C}(\G',A)\to\mathcal{C}(\G,A)$. 
First, we study regular maps between locally compact metric spaces. 
When the spaces are totally disconnected, 
we can introduce the notion of reduction maps 
(Definition \ref{defofreduction}, Proposition \ref{reductionisiso}). 
By using this reduction map, 
we will prove the isomorphism between relative homology groups. 
In Section 5, 
we give two examples from SFT groupoids. 
Applying Theorem \ref{LES_factor} and Theorem \ref{LES_sub}, 
one obtains long exact sequences. 
Section 6 deals with an example from hyperplane groupoids. 
We compute the homology groups of the octagonal tiling 
and the Penrose tiling by using our long exact sequences.

\section{Preliminaries}

In this section, 
we present the basic definitions and well-known results 
concerning \'etale groupoids and their homology groups.

\subsection{\'Etale groupoids}

The cardinality of a set $A$ is written $\#A$ and 
the characteristic function of $A$ is written $1_A$. 
We say that a subset of a topological space is clopen 
if it is both closed and open. 
A topological space is said to be totally disconnected 
if its connected components are singletons. 
By a Cantor set, 
we mean a compact, metrizable, totally disconnected space 
with no isolated points. 
It is known that any two such spaces are homeomorphic. 

In this article, by an \'etale groupoid 
we mean a second countable locally compact Hausdorff groupoid 
such that the range map is a local homeomorphism. 
(We emphasize that our \'etale groupoids are always assumed to be Hausdorff, 
while non-Hausdorff groupoids are also studied actively.) 
We refer the reader to \cite{Re_text,R08Irish} 
for background material on \'etale groupoids. 
For an \'etale groupoid $\G$, 
we let $\G^{(0)}$ denote the unit space and 
let $s$ and $r$ denote the source and range maps, 
i.e.\ $s(g)=g^{-1}g$, $r(g)=gg^{-1}$. 
For a subset $Y\subset\G^{(0)}$, 
the reduction of $\G$ to $Y$ is $r^{-1}(Y)\cap s^{-1}(Y)$ and 
denoted by $\G|Y$. 
If $Y$ is open, then 
the reduction $\G|Y$ is an \'etale subgroupoid of $\G$ in an obvious way. 

A subset $U\subset\G$ is called a $\G$-bisection 
if $r|U$ and $s|U$ are injective. 
For $U,V\subset\G$, we set 
\[
U^{-1}:=\{g\in\G\mid g^{-1}\in U\}
\]
and 
\[
UV:=\{gg'\in\G\mid g\in U,\ g'\in V,\ s(g)=r(g')\}. 
\]
When $U,V$ are $\G$-bisections, 
$U^{-1}$ and $UV$ are also $\G$-bisections. 

A subset $X\subset\G^{(0)}$ is said to be full 
if $r(s^{-1}(X))=\G^{(0)}$ holds. 
The \'etale groupoids $\G$ and $\H$ are said to be Kakutani equivalent 
if there exists full clopen subsets 
$X\subset\G^{(0)}$ and $Y\subset\H^{(0)}$ 
such that $\G|X\cong\H|Y$ 
(\cite[Definition 4.1]{Ma12PLMS},\cite[Definition 3.8]{FKPS19Munster}).

\subsection{Homology groups}

Let $A$ be a topological abelian group. 
For a locally compact Hausdorff space $X$, we denote by $C_c(X,A)$ 
the set of $A$-valued continuous functions with compact support. 
When $X$ is compact, we simply write $C(X,A)$. 
With pointwise addition, $C_c(X,A)$ is an abelian group. 
Let $\pi:X\to Y$ be a local homeomorphism 
between locally compact Hausdorff spaces. 
For $f\in C_c(X,A)$, we define a map $\pi_*(f):Y\to A$ by 
\[
\pi_*(f)(y):=\sum_{\pi(x)=y}f(x). 
\]
It is not so hard to see that $\pi_*(f)$ belongs to $C_c(Y,A)$ and 
that $\pi_*$ is a homomorphism from $C_c(X,A)$ to $C_c(Y,A)$. 
Besides, if $\pi':Y\to Z$ is another local homeomorphism to 
a locally compact Hausdorff space $Z$, then 
one can check $(\pi'\circ\pi)_*=\pi'_*\circ\pi_*$ in a direct way. 
Thus, $C_c(\cdot,A)$ is a covariant functor 
from the category of locally compact Hausdorff spaces 
with local homeomorphisms 
to the category of abelian groups with homomorphisms. 

Let $\G$ be an \'etale groupoid. 
For $n\in\N$, we write $\G^{(n)}$ 
for the space of composable strings of $n$ elements in $\G$, that is, 
\[
\G^{(n)}:=\{(g_1,g_2,\dots,g_n)\in\G^n\mid
s(g_i)=r(g_{i+1})\text{ for all }i=1,2,\dots,n{-}1\}. 
\]
For $i=0,1,\dots,n$, 
we let $d_i:G^{(n)}\to G^{(n-1)}$ be a map defined by 
\[
d_i(g_1,g_2,\dots,g_n):=\begin{cases}
(g_2,g_3,\dots,g_n) & i=0 \\
(g_1,\dots,g_ig_{i+1},\dots,g_n) & 1\leq i\leq n{-}1 \\
(g_1,g_2,\dots,g_{n-1}) & i=n. 
\end{cases}
\]
When $n=1$, we let $d_0,d_1:\G^{(1)}\to\G^{(0)}$ be 
the source map and the range map, respectively. 
For $j=0,1,\dots,n$, 
we let $s_j:G^{(n)}\to G^{(n+1)}$ be a map defined by 
\[
s_j(g_1,g_2,\dots,g_n):=\begin{cases}
(r(g_1),g_1,g_2,\dots,g_n) & j=0 \\
(g_1,\dots,g_j,r(g_{j+1}),g_{j+1},\dots,g_n) & 1\leq j\leq n{-}1 \\
(g_1,g_2,\dots,g_n,s(g_n)) & j=n. 
\end{cases}
\]
Clearly the maps $d_i$ and $s_j$ are local homeomorphisms. 
The spaces $(\G^{(n)})_{n\geq0}$ 
together with the face maps $d_i$ and the degeneracy maps $s_j$ 
form a simplicial space, 
which is called the nerve of $\G$ 
(see \cite[Example I.1.4]{GJ_text} for instance). 
Applying the covariant functor $C_c(\cdot,A)$ to the nerve of $\G$, 
we obtain the simplicial abelian group $(C_c(\G^{(n)},A))_{n\geq0}$. 
Define the homomorphisms $\partial_n:C_c(\G^{(n)},A)\to C_c(\G^{(n-1)},A)$ 
by 
\[
\partial_n:=\sum_{i=0}^n(-1)^id_{i*}. 
\]
The abelian groups $C_c(\G^{(n)},A)$ 
together with the boundary operators $\partial_n$ form a chain complex 
$\mathcal{C}(\G,A):=(C_c(\G^{(n)},A))_n$, 
which is called the Moore complex of the simplicial abelian group 
(see \cite[Chapter III.2]{GJ_text} for instance). 

\begin{definition}
[{\cite[Section 3.1]{CM00crelle},\cite[Definition 3.1]{Ma12PLMS}}]
\label{homology}
For $n\geq0$, we let $H_n(\G,A)$ be the homology groups of 
the Moore complex $\mathcal{C}(\G,A)$, 
i.e.\ $H_n(\G,A):=\Ker\partial_n/\Im\partial_{n+1}$, 
and call them the homology groups of $\G$ with constant coefficients $A$. 
When $A=\Z$, we simply write $H_n(\G):=H_n(\G,\Z)$. 
\end{definition}

It is known that if $\G$ and $\H$ are Kakutani equivalent, 
then they have the same homology groups, 
that is, $H_n(\G,A)\cong H_n(\H,A)$ 
(see \cite[Theorem 4.8]{Ma12PLMS} or \cite[Lemma 4.3]{FKPS19Munster}). 

Let $\G$ and $\H$ be \'etale groupoids and 
let $\pi:\G\to\H$ be a continuous and proper homomorphism. 
Define $\pi^{(n)}:\G^{(n)}\to\H^{(n)}$ by 
\[
\pi^{(n)}(g_1,g_2,\dots,g_n):=(\pi(g_1),\pi(g_2),\dots,\pi(g_n)). 
\]
It is easy to see that $\pi^{(n)}$ is also proper. 
Hence, we get homomorphisms from $C_c(\H^{(n)},A)$ to $C_c(\G^{(n)},A)$ 
by pull back, which commute with the boundary operators $\partial_n$. 
Thus, 
there exists a homomorphism $\pi^*:\mathcal{C}(\H,A)\to\mathcal{C}(\G,A)$.

\section{Subgroupoid situation}

In this section, 
we discuss a long exact sequence of homology groups 
of \'etale groupoids arising from the subgroupoid situation. 
The precise setting is given as follows. 

\begin{setting}\label{sub}
Let $\G$ be an \'etale groupoid 
and let $\G'\subset\G$ be an open subgroupoid with $\G'^{(0)}=\G^{(0)}$. 
Assume that $\G$ is totally disconnected. 
Set $\Delta:=\G\setminus\G'$. 
We define groupoids $\H$ and $\H'$ by 
\[
\H:=\G|r(\Delta)\quad\text{and}\quad \H':=\G'|r(\Delta). 
\]
Clearly $\H'$ is a subgroupoid of $\H$, $\H=\H'\cup\Delta$ 
and $\H^{(0)}=\H'^{(0)}=r(\Delta)$. 
\end{setting}

We would like to put suitable topologies on $\H$ and $\H'$ 
so that they become \'etale groupoids. 
To this end, 
we need certain technical conditions for the inclusion $\G'\subset\G$. 

\begin{definition}[{\cite[Definition 6.3]{Pu21Munster}}]
\label{regularinclusion}
Let $\G'\subset\G$ be as in Setting \ref{sub}. 
We say that the inclusion $\G'\subset\G$ is regular 
if the map $r:\Delta\to r(\Delta)$ is open, 
when the image is given the final topology with respect to $r$ 
(i.e. $U\subset r(\Delta)$ is open if and only if 
$r^{-1}(U)\cap\Delta$ is open in $\Delta$). 
\end{definition}

\begin{remark}\label{openness}
It is easy to see the following conditions are equivalent. 
\begin{enumerate}
\item The map $r:\Delta\to r(\Delta)$ is open, 
when the image is given the quotient topology. 
\item For any open subset $U\subset\Delta$ of $\Delta$, 
$r^{-1}(r(U))\cap\Delta$ is open in $\Delta$. 
\item For any open subset $U\subset\Delta$ and 
a sequence $(g_k)_k$ in $\Delta$, 
if $(g_k)_k$ converges to $g\in\Delta$ and $r(g)$ is in $r(U)$, 
then $r(g_k)$ is in $r(U)$ for all sufficiently large $k$. 
\end{enumerate}
\end{remark}

\begin{remark}
In \cite[Section 6]{Pu21Munster}, 
the groupoid $\G$ is not assumed to be \'etale or totally disconnected. 
Thus, the results of \cite[Section 6]{Pu21Munster} can be applied to 
actions of continuous groups on connected spaces. 
However, some of them require that 
the inclusion $\G'\subset\G$ is of finite index 
(see \cite[Definition 6.11]{Pu21Munster}). 
In this article, we do not need to assume this condition. 
\end{remark}

In what follows, 
we denote the multiplication map on $\G$ by $m:\G^{(2)}\to\G$. 
By \cite[Lemma 6.6 (1)]{Pu21Munster}, the map 
\[
m:(\Delta\times\Delta)\cap m^{-1}(\G')\to\H'
\]
is surjective. 

\begin{lemma}\label{misopen}
Let $\G'\subset\G$ and $\H'\subset\H$ be as in Setting \ref{sub}. 
Suppose that $\G'\subset\G$ is regular. 
Then the map $m:(\Delta\times\Delta)\cap m^{-1}(\G')\to\H'$ is open, 
when the image is given the quotient topology. 
\end{lemma}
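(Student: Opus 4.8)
The plan is to verify directly that, for every open set $W\subseteq\Omega:=(\Delta\times\Delta)\cap m^{-1}(\G')$, the saturation $m^{-1}(m(W))\cap\Omega$ is open in $\Omega$; since $\H'$ carries the quotient topology with respect to the surjection $m$, this is exactly what it means for $m$ to be an open map. Because $W\mapsto m^{-1}(m(W))\cap\Omega$ sends unions to unions, and because the totally disconnected \'etale groupoid $\G$ has a basis of compact open bisections, it is enough to treat a basic open set of the form $W=\bigl((B_1\cap\Delta)\times(B_2\cap\Delta)\bigr)\cap\Omega$ with $B_1,B_2\subseteq\G$ compact open bisections.

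The crucial observation is that membership in $m(W)$ is detected by the range and source of the product alone: using that $B_1,B_2$ are bisections (so $r|_{B_1}$ and $s|_{B_2}$ are injective) and that $\Delta^{-1}=\Delta$ (as $\G'$ is a subgroupoid), one checks that
\[
m(W)=\bigl\{\gamma\in(B_1B_2)\cap\G'\ :\ r(\gamma)\in r(B_1\cap\Delta)\ \text{ and }\ s(\gamma)\in r(B_2^{-1}\cap\Delta)\bigr\}.
\]
Indeed, every $\gamma\in B_1B_2$ factors uniquely as $\gamma=b_1b_2$ with $b_i\in B_i$, and $b_1\in\Delta$ is equivalent to $r(\gamma)\in r(B_1\cap\Delta)$, while $b_2\in\Delta$ is equivalent to $s(\gamma)\in s(B_2\cap\Delta)=r(B_2^{-1}\cap\Delta)$.

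Now fix $(h_1,h_2)\in m^{-1}(m(W))\cap\Omega$ and put $\gamma:=h_1h_2\in m(W)$; I must exhibit a neighbourhood of $(h_1,h_2)$ in $\Omega$ mapped into $m(W)$. From $\gamma\in m(W)$ one gets $r(h_1)=r(\gamma)\in r(B_1\cap\Delta)$ and $r(h_2^{-1})=s(\gamma)\in r(B_2^{-1}\cap\Delta)$, so $h_1$ and $h_2^{-1}$ lie in the sets $r^{-1}(r(B_1\cap\Delta))\cap\Delta$ and $r^{-1}(r(B_2^{-1}\cap\Delta))\cap\Delta$. This is exactly where the regularity of $\G'\subset\G$ is used: by Remark \ref{openness}(2) both of these sets are \emph{open} in $\Delta$. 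Using this, that inversion is a homeomorphism of $\Delta$, and that $m$ is continuous with $m(h_1,h_2)=\gamma\in B_1B_2$, one can pick (shrinking as needed) compact open bisections $C_1\ni h_1$ and $C_2\ni h_2$ with $C_1\cap\Delta\subseteq r^{-1}(r(B_1\cap\Delta))$, $(C_2\cap\Delta)^{-1}\subseteq r^{-1}(r(B_2^{-1}\cap\Delta))$, and $m\bigl((C_1\times C_2)\cap\G^{(2)}\bigr)\subseteq B_1B_2$. Then any $(h_1',h_2')\in\Omega\cap(C_1\times C_2)$ has product $\gamma'=h_1'h_2'$ lying in $\G'$ (by the definition of $\Omega$) and in $B_1B_2$, with $r(\gamma')=r(h_1')\in r(B_1\cap\Delta)$ since $h_1'\in C_1\cap\Delta$, and $s(\gamma')=s(h_2')\in r(B_2^{-1}\cap\Delta)$ since $(h_2')^{-1}\in(C_2\cap\Delta)^{-1}$; so $\gamma'\in m(W)$ by the displayed formula, and $\Omega\cap(C_1\times C_2)$ is the required neighbourhood.

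The only real obstacle, I expect, is the conceptual step behind the first two paragraphs. The pairs $(h_1,h_2)$ one must control need not resemble any pair in $W$, and since $\Delta$ is merely closed — not open — being in $\Delta$ is not an open condition, so no naive argument that nearby points remain in place is available. Expressing membership in $m(W)$ purely through $r(\gamma)$ and $s(\gamma)$ is what breaks this impasse, and the regularity hypothesis, in the form of Remark \ref{openness}(2), is precisely what promotes these range/source conditions to open conditions on the factors; the rest is routine bookkeeping with compact open bisections.
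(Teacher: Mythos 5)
Your proof is correct, but it takes a genuinely different route from the paper's. The paper verifies the sequential criterion (in the spirit of Remark \ref{openness} (3), applied to $m$): given $(g_k,h_k)\to(g,h)$ in $\Omega:=(\Delta\times\Delta)\cap m^{-1}(\G')$ with $gh\in m(V)$ for a basic open $V=(U_1\times U_2)\cap\Omega$, it writes $gh=ab$ with $(a,b)\in V$, lifts the correcting element $c:=a^{-1}g=bh^{-1}$ through the source map to elements $c_k$ with $s(c_k)=r(h_k)$, and splits into the cases $c_k\in\G'$ and $c_k\in\Delta$; the second case is settled by invoking \cite[Lemma 6.4]{Pu21Munster} (openness of $(\Delta\times\Delta)\cap m^{-1}(\Delta)$ in $\Delta\times\Delta$), after which the factorization $g_kh_k=(g_kc_k^{-1})(c_kh_k)$ puts $g_kh_k$ in $m(V)$. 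You instead prove directly that the saturation $m^{-1}(m(W))\cap\Omega$ of a basic open $W=(B_1\times B_2)\cap\Omega$ is open, by the exact description of $m(W)$ through the conditions $\gamma\in B_1B_2\cap\G'$, $r(\gamma)\in r(B_1\cap\Delta)$, $s(\gamma)\in r(B_2^{-1}\cap\Delta)$ (the unique factorization over bisections makes this an equality, and your verification of it is sound), and you then feed regularity in, via Remark \ref{openness} (2), to turn these range/source constraints into open conditions on the factors. What your argument buys: it avoids the case analysis and the external citation, it pinpoints exactly where regularity is used, and it needs total disconnectedness only to produce a basis of open bisections, which every \'etale groupoid has (compactness of your $C_i$ is never used). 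What the paper's argument buys: brevity given the citation, and a sequential style that matches the neighbouring lemmas, where the same lifting trick recurs. The one step you should state explicitly is that $B_1B_2$ is open in $\G$ (the product of open bisections in an \'etale groupoid is open, because multiplication is a local homeomorphism); this is what allows you to shrink $C_1\times C_2$ inside $m^{-1}(B_1B_2)$, and without it the containment $m\bigl((C_1\times C_2)\cap\G^{(2)}\bigr)\subseteq B_1B_2$ would not follow from continuity alone. With that remark added, your proof is complete.
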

\begin{proof}
Take open subsets $U_1,U_2$ of $\G$ and set 
\[
V:=(U_1\times U_2)\cap(\Delta\times\Delta)\cap m^{-1}(\G'). 
\]
Let $((g_k,h_k))_k$ be a sequence converging to $(g,h)$ 
in $(\Delta\times\Delta)\cap m^{-1}(\G')$. 
Assume $gh\in m(V)$. 
It suffices to show that $g_kh_k$ belongs to $m(V)$ 
for all sufficiently large $k$ (see Remark \ref{openness} (3)). 
There exists $(a,b)\in V$ such that $ab=gh$. 
Let $c:=bh^{-1}=a^{-1}g$. 
Since $r(h_k)\to r(h)=s(c)$, 
we can find a sequence $(c_k)_k$ in $\G$ such that 
$s(c_k)=r(h_k)$ and $c_k\to c$. 
Put $A:=\{k\in\N\mid c_k\in\G'\}$ and 
$B:=\{k\in\N\mid c_k\in\Delta\}=\N\setminus A$. 

When $A$ is an infinite set, 
we may think of $(c_k)_{k\in A}$ as a subsequence of $(c_k)_k$. 
For $k\in A$, $c_kh_k$ is in $\Delta$ and $c_kh_k\to ch=b$. 
Similarly we have $g_kc_k^{-1}$ is in $\Delta$ 
and $g_kc_k^{-1}\to gc^{-1}=a$. 
It follows that 
\[
g_kh_k=g_kc_k^{-1}c_kh_k=m(g_kc_k^{-1},c_kh_k)
\]
belongs to $m(V)$ for all sufficiently large $k\in A$. 

When $B$ is an infinite set, 
we may think of $(c_k)_{k\in B}$ as a subsequence of $(c_k)_k$. 
It follows from \cite[Lemma 6.4]{Pu21Munster} that 
$(\Delta\times\Delta)\cap m^{-1}(\Delta)$ is 
open in $\Delta\times\Delta$. 
This, together with $c_kh_k\to ch=b\in\Delta$, 
implies that $c_kh_k\in\Delta$ for all sufficiently large $k\in B$. 
In the same way, $g_kc_k^{-1}$ is in $\Delta$. 
Therefore, we obtain the same conclusion as above. 

Consequently, we can deduce that 
$g_kh_k$ belongs to $m(V)$ for all sufficiently large $k\in\N$, 
which completes the proof. 
\end{proof}

Now, we introduce topologies on $\H$ and $\H'$ 
in the following way. 

\begin{definition}[{\cite[Definition 6.5, Theorem 6.8]{Pu21Munster}}]
Let $\G'\subset\G$ and $\H'\subset\H$ be as in Setting \ref{sub}. 
\begin{enumerate}
\item Consider the surjection 
\[
m:(\Delta\times\Delta)\cap m^{-1}(\G')\to\H', 
\]
and endow $\H'$ with the quotient topology from this map. 
\item Endow $\H=\H'\cup\Delta$ with the disjoint union topology, 
that is, both $\H'$ and $\Delta$ are clopen. 
\end{enumerate}
\end{definition}

The following theorem says that if $\G'\subset\G$ is regular, 
then $\H$ and $\H'$ with the topologies above are \'etale groupoids 
as desired. 

\begin{theorem}\label{Hisetale}
Let $\G'\subset\G$ and $\H'\subset\H$ be as in Setting \ref{sub}. 
Suppose that $\G'\subset\G$ is regular. 
\begin{enumerate}
\item Both $\H$ and $\H'$ are \'etale groupoids 
\item Both $\H$ and $\H'$ are totally disconnected. 
\item $\H'$ is a clopen subgroupoid of $\H$. 
\end{enumerate}
\end{theorem}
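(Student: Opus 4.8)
The plan is to verify the three assertions for $\H'$ first and then transfer them to $\H$ using the disjoint union topology. The key technical input is Lemma \ref{misopen}: the multiplication map $m:(\Delta\times\Delta)\cap m^{-1}(\G')\to\H'$ is an open surjection onto $\H'$ with the quotient topology. Since $(\Delta\times\Delta)\cap m^{-1}(\G')$ is an open subset of $\G\times\G$, which is second countable, locally compact, Hausdorff and (being a subspace of $\G\times\G$, with $\G$ totally disconnected) has a basis of compact open sets, I would first record that these properties pass to open subsets, and then study which of them survive passage to the quotient $\H'$ under an open surjection.

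First I would check second countability and the existence of a countable basis of compact open sets: the domain has such a basis, its images under the open map $m$ form a basis for $\H'$, and since $m$ is continuous these images are compact; openness of $m$ ensures they are open. Thus $\H'$ has a basis of compact open sets, and countably many of them suffice, giving second countability and local compactness simultaneously; a space with a basis of clopen sets is totally disconnected, which gives item (2) for $\H'$. The more delicate point is Hausdorffness of $\H'$: quotients of Hausdorff spaces need not be Hausdorff, so here one must use the groupoid structure. I expect this to be the main obstacle. The standard route is to observe that, for an open surjection $q:Z\to W$, the quotient $W$ is Hausdorff if and only if the relation $R=\{(z,z')\in Z\times Z\mid q(z)=q(z')\}$ is closed in $Z\times Z$; so I would need to show that
\[
R=\{((a,b),(a',b'))\in \bigl((\Delta\times\Delta)\cap m^{-1}(\G')\bigr)^{2}\mid ab=a'b'\}
\]
is closed. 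Given a convergent net (or sequence, by second countability) with $(a_k,b_k)\to(a,b)$ and $(a'_k,b'_k)\to(a',b')$ and $a_kb_k=a'_kb'_k$ in $\Delta$-land, continuity of multiplication and inversion in $\G$ forces $ab=a'b'$ since $\G$ is Hausdorff, and the limits lie in $(\Delta\times\Delta)\cap m^{-1}(\G')$ because that set is the intersection of $\Delta\times\Delta$ with the open set $m^{-1}(\G')$ — here one uses that $\Delta=\G\setminus\G'$ is \emph{closed} in $\G$ (as $\G'$ is open), so the limits of the $a_k,b_k$ stay in $\Delta$, while $a_kb_k=a'_kb'_k$ lies in $\G'$ and its limit does too, giving $(a,b)\in m^{-1}(\G')$. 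Hence $R$ is closed and $\H'$ is Hausdorff.

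Next I would verify that $\H'$ is actually an \'etale groupoid with this topology, not merely a nice space: the range map $r_{\H'}$ must be a local homeomorphism. Pulling back through the open surjection $m$, the composite $r_{\H'}\circ m$ sends $(a,b)\mapsto r(a)$, which is a local homeomorphism on $(\Delta\times\Delta)\cap m^{-1}(\G')$ since $r$ is a local homeomorphism on $\G$; combining this with openness and continuity of $m$ (and restricting to small bisection-like pieces of the domain on which $m$ is injective, which exist because $\G$ has a basis of bisections) shows $r_{\H'}$ is a local homeomorphism. One should also check that multiplication and inversion on $\H'$ are continuous for the quotient topology — inversion is induced by the flip $(a,b)\mapsto(b^{-1},a^{-1})$ on the domain, and multiplication is handled by a similar pullback argument using Lemma \ref{misopen} together with \cite[Lemma 6.4]{Pu21Munster}. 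This establishes items (1) and (2) for $\H'$. Then item (3), and the remaining half of (1) and (2), are essentially automatic: by construction $\H=\H'\sqcup\Delta$ carries the disjoint union topology, $\Delta$ is an open subset of the totally disconnected \'etale groupoid $\G$ hence is itself a totally disconnected locally compact Hausdorff second-countable space on which $r$ restricts to a local homeomorphism, and a disjoint union of two \'etale groupoids along a common unit space is again \'etale, totally disconnected, with each summand clopen — in particular $\H'$ is a clopen subgroupoid of $\H$, giving (3). The only subtlety in assembling $\H$ is to check that the groupoid operations are jointly continuous across the two clopen pieces, i.e.\ that a product of an element of $\Delta$ with an element of $\H'$ (landing in one or the other piece) depends continuously on its factors; this follows from continuity of $m$ on $\G^{(2)}$ combined with the fact that $\Delta$ is clopen in $\G$ and $\H'$ is a quotient of an open subset of $\G^{(2)}$, so one argues piecewise using Remark \ref{openness}(3) exactly as in the proof of Lemma \ref{misopen}.
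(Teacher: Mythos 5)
Your treatment of the point-set topology of $\H'$ is fine and in fact a genuinely different route from the paper's: instead of quoting Putnam's Theorems 6.7 and 6.8 for Hausdorffness and local compactness, you get them directly from Lemma \ref{misopen} (an open quotient map has Hausdorff image iff the relation $R$ is closed, and images of a countable basis of compact open sets give a basis of compact open sets for $\H'$), which also yields second countability and total disconnectedness in one stroke. That part could be written up correctly.

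The genuine gap is in the \'etaleness argument, in two places. First, the map $r_{\H'}\circ m:(a,b)\mapsto r(a)$ on $(\Delta\times\Delta)\cap m^{-1}(\G')$ is \emph{not} a local homeomorphism: it is nowhere locally injective, since the second coordinate varies freely in each fibre; your parenthetical fix via pieces on which $m$ is injective is pointing in the right direction but is not carried out, and what it ultimately requires is openness of $r:\H'\to\H^{(0)}$, which in the paper comes from the commutative square $r\circ m=r\circ p$ with the open projection $p(g,h)=g$, reducing everything to openness of $r|\Delta$. Second, and more seriously, your final paragraph declares the $\Delta$ half ``essentially automatic'' because $\Delta$ is open in the \'etale groupoid $\G$. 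This is exactly where the theorem's difficulty lies: the range map on $\Delta$ takes values in $\H^{(0)}=r(\Delta)$ equipped with the \emph{new, finer} topology (the subspace topology from the quotient $\H'$), not the relative topology from $\G^{(0)}$, so neither continuity nor openness of $r|\Delta$ follows from $\G$ being \'etale. Openness of $r|\Delta$ for the new topology is precisely where regularity and Lemma \ref{misopen} must be invoked; the paper does it via the identity $r(U\cap\Delta)=(U\cap\Delta)(U\cap\Delta)^{-1}$ for an open $\G$-bisection $U$, which exhibits $r(U\cap\Delta)$ as the $m$-image of an open set and hence open in $\H'$. Relatedly, ``a disjoint union of two \'etale groupoids along a common unit space'' is not an accurate description of $\H=\H'\cup\Delta$, since $\Delta$ is not a subgroupoid (products of elements of $\Delta$ may land in $\H'$, and products mixing the two pieces land in $\Delta$), so continuity of the operations across the pieces, and of $r$ into the finer unit-space topology, must be proved rather than inherited; the paper sidesteps these verifications by citing Putnam's Theorems 6.7 and 6.8 for the topological-groupoid structure and only adds second countability and the openness of the range maps.
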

\begin{proof}
(1)
By \cite[Theorem 6.7, 6.8]{Pu21Munster}, 
both $\H$ and $\H'$ are locally compact Hausdorff groupoids. 
(We remark that $\H'$ being locally compact immediately 
follows from Lemma \ref{misopen}.) 
By Lemma \ref{misopen}, 
$\H'$ is second countable, and so is $\H$. 
It remains for us to show that $\H$ and $\H'$ are \'etale. 
To this end, since $\G$ is \'etale and 
the topologies on $\H$ and $\H'$ are finer 
than the relative topologies from $\G$, 
it suffices to show that 
$r:\H'\to\H^{(0)}$ and $r:\Delta\to\H^{(0)}$ are open. 
When $U\subset\G$ is an open $\G$-bisection, 
$r(U\cap\Delta)=(U\cap\Delta)(U\cap\Delta)^{-1}$ is open 
thanks to Lemma \ref{misopen}. 
Therefore $r|\Delta$ is open. 
Let us consider the following commutative diagram: 
\[
\xymatrix@M=10pt{
(\Delta\times\Delta)\cap m^{-1}(\G') \ar[r]^-m \ar[d]_p 
& \H' \ar[d]^r \\
\Delta \ar[r]_r & \H^{(0)}, 
}
\]
where $p$ is defined by $p(g,h)=g$. 
As $p$ is clearly open, $r|\H'$ is also open. 

(2)
This is obvious, 
because the topologies on $\H$ and $\H'$ are finer 
than the relative topologies from $\G$, 

(3)
This is clear from the definition of the topology on $\H$. 
\end{proof}

Next, we would like to show that 
$\G^{(n)}\setminus\G'^{(n)}$ and $\H^{(n)}\setminus\H'^{(n)}$ are 
canonically homeomorphic, 
which implies that $\G'\subset\G$ and $\H'\subset\H$ share 
the same relative homology theory. 
We need the following technical lemma. 

\begin{lemma}
Let $\G'\subset\G$ and $\H'\subset\H$ be as in Setting \ref{sub}. 
Suppose that $\G'\subset\G$ is regular. 
Let $(g_k)_k$ be a sequence in $\H$ and let $g\in\H$. 
The following conditions are equivalent. 
\begin{enumerate}
\item $g_k\to g$ in $\H$. 
\item $r(g_k)\to r(g)$ in $\H$, $s(g_k)\to s(g)$ in $\H$ 
and $g_k\to g$ in $\G$. 
\item $r(g_k)\to r(g)$ in $\H$ and $g_k\to g$ in $\G$. 
\item $s(g_k)\to s(g)$ in $\H$ and $g_k\to g$ in $\G$. 
\end{enumerate}
\end{lemma}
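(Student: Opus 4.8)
The plan is to establish the cycle of implications $(1)\Rightarrow(2)\Rightarrow(3)\Rightarrow(1)$ and $(2)\Rightarrow(4)\Rightarrow(1)$, so that all four conditions are seen to be equivalent. Since $\H=\H'\sqcup\Delta$ carries the disjoint union topology with $\H'$ and $\Delta$ clopen, I would first reduce to the case where all terms lie in one piece: passing to a subsequence, either $g,g_k\in\Delta$ for all large $k$, or $g\in\H'$ and $g_k\in\H'$ for all large $k$ (convergence in $\H$ forces $g_k$ eventually into the clopen piece containing $g$). On $\Delta$ the topology is the relative topology from $\G$, so there the statement is essentially the well-known characterization of convergence in an \'etale groupoid in terms of convergence of the element together with convergence of its range (or source); the content is genuinely in the $\H'$ case.

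The easy directions are $(2)\Rightarrow(3)$ and $(2)\Rightarrow(4)$, which are trivial, and $(1)\Rightarrow(2)$: if $g_k\to g$ in $\H$ then, because $r\colon\H\to\H^{(0)}$ and $s\colon\H\to\H^{(0)}$ are continuous (indeed $r$ is open by Theorem~\ref{Hisetale}), we get $r(g_k)\to r(g)$ and $s(g_k)\to s(g)$ in $\H$; and since the topology on $\H$ is finer than the relative topology from $\G$, we also get $g_k\to g$ in $\G$. So the real work is in $(3)\Rightarrow(1)$ (and symmetrically $(4)\Rightarrow(1)$), which I expect to be the main obstacle.

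For $(3)\Rightarrow(1)$ in the case $g\in\H'$: by the definition of the quotient topology on $\H'$, it suffices to produce, for every open neighborhood of $g$ of the form $m(V)$ with $V=(U_1\times U_2)\cap(\Delta\times\Delta)\cap m^{-1}(\G')$ open in $(\Delta\times\Delta)\cap m^{-1}(\G')$, that $g_k\in m(V)$ for all large $k$ --- this is exactly the openness statement of Lemma~\ref{misopen}, but now I need to feed it a convergent sequence $(a_k,b_k)\to(a,b)$ in $(\Delta\times\Delta)\cap m^{-1}(\G')$ with $a_kb_k=g_k$. The mechanism is the one already used in the proof of Lemma~\ref{misopen}: choose $(a,b)\in V$ with $ab=g$, set $c:=a^{-1}g=bg^{-1}$ (using some bisection picture), and then --- crucially using that $r(g_k)\to r(g)$ in $\H$, hence $r(g_k)\to r(g)$ in $\G$ with the extra $\H$-control --- lift to a sequence $c_k\to c$ in $\G$ with $s(c_k)=r(g_k)$, split into the cases $c_k\in\G'$ and $c_k\in\Delta$ as in Lemma~\ref{misopen}, and write $g_k=(g_kc_k^{-1})(c_kh_k)$ with both factors in $\Delta$ and converging to $a$ and $b$ respectively. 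Here $h_k$ should be chosen so that $c_kh_k$ and $g_kc_k^{-1}$ make sense and multiply to $g_k$; this is where I must be careful, and the hypothesis $g_k\to g$ in $\G$ together with $r(g_k)\to r(g)$ in $\H$ is precisely what guarantees the factors land in $\Delta$ and converge correctly. Once this is set up, Lemma~\ref{misopen} (or rather its proof technique) yields $g_k\in m(V)$ eventually. The implication $(4)\Rightarrow(1)$ follows by the symmetric argument, applying the inverse map $g\mapsto g^{-1}$ (a homeomorphism of $\H$ exchanging the roles of $r$ and $s$), or by running the same factorization from the left. The main obstacle, as indicated, is getting the bookkeeping of the lifted sequence $(a_k,b_k)$ right so that it genuinely converges in the domain $(\Delta\times\Delta)\cap m^{-1}(\G')$ and has product $g_k$; everything else is routine manipulation with bisections and the disjoint-union structure of $\H$.
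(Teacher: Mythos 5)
There is a genuine gap, and it sits exactly where you flagged the ``main obstacle.'' Your opening reduction --- ``passing to a subsequence, either $g,g_k\in\Delta$ for all large $k$, or $g\in\H'$ and $g_k\in\H'$ for all large $k$, since convergence in $\H$ forces $g_k$ eventually into the clopen piece containing $g$'' --- is circular for the only nontrivial implication (3)$\implies$(1): convergence in $\H$ is precisely the conclusion you are trying to establish, and under hypothesis (3) alone nothing prevents, a priori, a subsequence with $g\in\H'$ but $g_k\in\Delta$ (ruling this out is the actual content of the lemma). The circularity then propagates into your main construction: to produce a lifted sequence $(a_k,b_k)$ in $(\Delta\times\Delta)\cap m^{-1}(\G')$ with $a_kb_k=g_k$ you need $g_k\in\G'$, which is not part of hypothesis (3). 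Moreover the bookkeeping is broken as written: with $g=ab$ one has $a^{-1}g=b$ and $bg^{-1}=a^{-1}$, so ``$c:=a^{-1}g=bg^{-1}$'' does not define one element; the factor $h_k$ in $g_k=(g_kc_k^{-1})(c_kh_k)$ is never defined (in Lemma \ref{misopen} the pair $(g_k,h_k)$ was part of the given data, here only $g_k$ is given); and $g_kc_k^{-1}$ is not even composable when $s(c_k)=r(g_k)$.

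The paper's proof avoids all of this by never assuming which piece $g_k$ lies in. First it treats the case $g\in\Delta$ for an \emph{arbitrary} sequence in $\H$: take an open $\G$-bisection $U\ni g$; then $g_k\in U$ eventually (convergence in $\G$), and $r(g_k)\in r(U\cap\Delta)$ eventually because $r(U\cap\Delta)$ is open in $\H$ (this is where Lemma \ref{misopen}/regularity enters); injectivity of $r$ on $U$ then forces $g_k\in U\cap\Delta$, so $g_k\to g$ in $\Delta\subset\H$. For $g\in\H'$ it writes $g=ab$ with $a,b\in\Delta$ and lifts only the \emph{range}: openness of $r(U\cap\Delta)$ in $\H$ (with $U\ni a$ a bisection) yields $a_k\in U\cap\Delta$ with $r(a_k)=r(g_k)$, and since $U\cap\Delta$ is an $\H$-bisection, $a_k\to a$ in $\H$; then $a_k^{-1}g_k$ is a sequence in $\H$ converging in $\G$ to $b\in\Delta$ with ranges $s(a_k)\to s(a)$ in $\H$, so the first case applies, and $g_k=a_k(a_k^{-1}g_k)\to g$ by continuity of multiplication in $\H$. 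In particular, $g_k\in\H'$ eventually comes out as a consequence, not an assumption. If you insist on your quotient-topology route, you would first have to prove, without circularity, that (3) forces $g_k$ eventually into the piece containing $g$ --- and that is essentially the paper's bisection argument, at which point the $m(V)$ machinery is no longer needed. Your treatment of the easy implications and of (4)$\implies$(1) via the inverse map is fine.
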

\begin{proof}
The implications (1)$\implies$(2), (2)$\implies$(3) and 
(2)$\implies$(4) are obvious. 
It suffices to show (3)$\implies$(1). 

First, we consider the case $g\in\Delta$. 
Let $U\subset\G$ be an open $\G$-bisection such that $g\in U$. 
Since $(g_k)_k$ converges to $g$ in $\G$, 
$g_k$ is in $U$ eventually. 
We also have $r(g_k)\in r(U\cap\Delta)$ eventually, 
because $r(U\cap\Delta)$ is open in $\H$ and $r(g_k)\to r(g)$ in $\H$. 
It follows that $g_k$ belongs to $U\cap\Delta$ eventually, 
which means that $g_k$ converges to $g$ in $\H$. 

Next, we consider the case $g\in\H'$. 
Choose $a,b\in\Delta$ so that $g=ab$. 
Let $U$ be an open $\G$-bisection such that $a\in U$. 
We have $r(g_k)\in r(U\cap\Delta)$ eventually, 
because $r(U\cap\Delta)$ is open in $\H$ and $r(g_k)\to r(g)$ in $\H$. 
Hence, there exists $a_k\in U\cap\Delta$ such that $r(g_k)=r(a_k)$. 
As $U\cap\Delta$ is an open $\H$-bisection and 
$r(a_k)=r(g_k)\to r(g)=r(a)$ in $\H$, 
we have $a_k\to a$ in $\H$. 
Then, $a_k^{-1}g_k$ converges to $a^{-1}g=b\in\Delta$ in $\G$, 
and $r(a_k^{-1}g_k)=s(a_k)$ converges to $r(a^{-1}g)=s(a)$ in $\H$. 
It follows from the first half of the proof that 
$a_k^{-1}g_k$ converges to $b\in\Delta$ in $\H$. 
Thus, $g_k=a_ka_k^{-1}g_k$ converges to $g$ in $\H$. 
\end{proof}

\begin{proposition}
Let $\G'\subset\G$ and $\H'\subset\H$ be as in Setting \ref{sub}. 
Suppose that $\G'\subset\G$ is regular. 
For any $n\in\N\cup\{0\}$, 
the topological spaces $\G^{(n)}\setminus\G'^{(n)}$ 
and $\H^{(n)}\setminus\H'^{(n)}$ are naturally homeomorphic. 
\end{proposition}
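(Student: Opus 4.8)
The plan is to take the natural identification to be the identity on underlying sets, so the first step is to check that $\G^{(n)}\setminus\G'^{(n)}$ and $\H^{(n)}\setminus\H'^{(n)}$ agree as sets, both being equal to the set of composable strings $(g_1,\dots,g_n)\in\G^{(n)}$ having at least one entry in $\Delta$. Here I will use repeatedly that $\Delta^{-1}=\Delta$, that $\G'\Delta\subseteq\Delta$ and that $\Delta\G'\subseteq\Delta$ for composable elements, all of which follow immediately from $\G'$ being a subgroupoid with $\G'^{(0)}=\G^{(0)}$. If such a string has $g_i\in\Delta$, then $r(g_i),s(g_i)\in r(\Delta)$ since $g_i,g_i^{-1}\in\Delta$. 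To propagate: whenever an interface unit $s(g_j)=r(g_{j+1})$ lies in $r(\Delta)$, pick $\eta\in\Delta$ with $r(\eta)=r(g_{j+1})$; then $\eta^{-1}g_{j+1}$ is composable and lies in $\Delta$ or in $\G'$, and either $s(g_{j+1})=s(\eta^{-1}g_{j+1})\in r(\Delta)$ or $g_{j+1}=\eta(\eta^{-1}g_{j+1})\in\Delta\G'\subseteq\Delta$, so in both cases $s(g_{j+1})\in r(\Delta)$; the symmetric computation with $g_j\eta$ and $\G'\Delta\subseteq\Delta$ propagates to the left. Hence all interface units $r(g_1),s(g_1)=r(g_2),\dots,s(g_n)$ lie in $r(\Delta)=\H^{(0)}$, so the string lies in $\H^{(n)}$ and still has an entry in $\Delta$; the reverse inclusion is trivial since $\H\subseteq\G$ as a set. (When $n=0$ both sets are empty, and when $n=1$ both equal $\Delta$ with the same topology, so from now on assume $n\geq2$.)

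All the spaces involved are second countable, locally compact and Hausdorff, hence metrizable, so it suffices to argue with sequences. The identity map $\iota:\H^{(n)}\setminus\H'^{(n)}\to\G^{(n)}\setminus\G'^{(n)}$ is continuous because the topology of $\H$ is finer than the relative topology from $\G$; what remains is the continuity of $\iota^{-1}$. So let $\xi^{(k)}=(g_1^{(k)},\dots,g_n^{(k)})$ converge to $\xi=(g_1,\dots,g_n)$ coordinatewise in $\G$, with all terms and the limit in $\H^{(n)}\setminus\H'^{(n)}$, and write $x_0,\dots,x_n$ for the interface units of $\xi$ and $x_0^{(k)},\dots,x_n^{(k)}$ for those of $\xi^{(k)}$; by the first step all of these lie in $r(\Delta)=\H^{(0)}$, and $x_m^{(k)}\to x_m$ in $\G^{(0)}$ for each $m$. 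Since convergence in a metric space can be tested along subsequences, I may pass to a subsequence along which the set $I\subseteq\{1,\dots,n\}$ of positions $j$ with $g_j^{(k)}\in\Delta$ is the same for all $k$; since each $\xi^{(k)}$ has an entry in $\Delta$, $I\neq\emptyset$. For $j\in I$ one gets $g_j\in\Delta$ (as $\Delta$ is closed in $\G$) and $g_j^{(k)}\to g_j$ in $\H$ (the topology of $\Delta\subseteq\H$ being the relative one from $\G$), hence, by continuity of $r,s:\H\to\H^{(0)}$, one obtains $x_{j-1}^{(k)}\to x_{j-1}$ and $x_j^{(k)}\to x_j$ in $\H^{(0)}$. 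The remaining plan is to propagate this convergence in $\H^{(0)}$ outward along the string from a position in $I$, and then to recover coordinatewise convergence in $\H$ by invoking the lemma preceding this proposition, which gives $g_j^{(k)}\to g_j$ in $\H$ as soon as $g_j^{(k)}\to g_j$ in $\G$ and $r(g_j^{(k)})=x_{j-1}^{(k)}\to x_{j-1}=r(g_j)$ in $\H^{(0)}$.

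The heart of the matter, and the step I expect to be the main obstacle, is a single propagation step in the case $m+1\notin I$, i.e.\ $g_{m+1}^{(k)}\in\G'$ for all $k$: assuming inductively that $x_m^{(k)}\to x_m$ in $\H^{(0)}$, I must show first that $g_{m+1}\in\G'$ and then that $x_{m+1}^{(k)}\to x_{m+1}$ in $\H^{(0)}$. Suppose, toward a contradiction, that $g_{m+1}\in\Delta$. Since $\H$ is \'etale (Theorem \ref{Hisetale}) and $\Delta$ is open in $\H$, choose an open $\H$-bisection $B\subseteq\Delta$ with $g_{m+1}\in B$; as $x_m=r(g_{m+1})\in r(B)$ and $x_m^{(k)}\to x_m$ in $\H^{(0)}$, for all large $k$ we may define $\delta^{(k)}:=(r|B)^{-1}(x_m^{(k)})\in\Delta$, so that $r(\delta^{(k)})=r(g_{m+1}^{(k)})$ and $\delta^{(k)}\to g_{m+1}$ in $\H$, hence in $\G$. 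Then $c^{(k)}:=(\delta^{(k)})^{-1}g_{m+1}^{(k)}$ is a well-defined product lying in $\Delta\G'\subseteq\Delta$, and in $\G$ it converges to $g_{m+1}^{-1}g_{m+1}=s(g_{m+1})$, a unit; but units lie in $\G^{(0)}\subseteq\G'$ and not in $\Delta$, contradicting that $\Delta$ is closed in $\G$. Therefore $g_{m+1}\in\G'$, and the preceding lemma (applied with $r(g_{m+1}^{(k)})=x_m^{(k)}\to x_m=r(g_{m+1})$ in $\H^{(0)}$ and $g_{m+1}^{(k)}\to g_{m+1}$ in $\G$) yields $g_{m+1}^{(k)}\to g_{m+1}$ in $\H$, whence $x_{m+1}^{(k)}=s(g_{m+1}^{(k)})\to s(g_{m+1})=x_{m+1}$ in $\H^{(0)}$. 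When $m+1\in I$ this last convergence was already observed, and the leftward propagation step is proved in the same way using the $s$-version of the preceding lemma and $\G'\Delta\subseteq\Delta$. Iterating from a position in $I$ gives $x_m^{(k)}\to x_m$ in $\H^{(0)}$ for all $m$, and then the preceding lemma yields $g_j^{(k)}\to g_j$ in $\H$ for all $j$, i.e.\ $\xi^{(k)}\to\xi$ in $\H^{(n)}$. Thus $\iota$ is a homeomorphism. (The regularity of $\G'\subset\G$ enters only through Theorem \ref{Hisetale} and the preceding lemma.)
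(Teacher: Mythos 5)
Your proof is correct and follows essentially the same route as the paper: identify the two sets, note the $\H$-topology is finer, and prove sequential continuity of the other direction by anchoring at a coordinate lying in $\Delta$ and propagating convergence of the interface units outward via the preceding lemma (the paper organizes this with the closed cover $C_i=\{\xi\mid g_i\in\Delta\}$ and the pasting lemma instead of your constant-pattern subsequences, and simply asserts the set equality you verify in detail). One remark: your contradiction argument showing $g_{m+1}\in\G'$ is superfluous, since the preceding lemma needs only $g_{m+1}^{(k)}\to g_{m+1}$ in $\G$ and $r(g_{m+1}^{(k)})\to r(g_{m+1})$ in $\H^{(0)}$ and covers both cases $g_{m+1}\in\H'$ and $g_{m+1}\in\Delta$.
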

\begin{proof}
When $n=0$, 
both $\G^{(0)}\setminus\G'^{(0)}$ and $\H^{(0)}\setminus\H'^{(0)}$ 
are empty. 
Let $n\in\N$. 
One has 
\begin{align*}
\G^{(n)}\setminus\G'^{(n)}
&=\{(g_1,g_2,\dots,g_n)\in\G^{(n)}
\mid\text{$\exists i$ such that $g_i\in\Delta$}\}\\
&=\{(g_1,g_2,\dots,g_n)\in\H^{(n)}
\mid\text{$\exists i$ such that $g_i\in\Delta$}\}\\
&=\H^{(n)}\setminus\H'^{(n)}
\end{align*}
as sets. 
By definition, it is evident that 
the topology of $\H^{(n)}\setminus\H'^{(n)}$ is finer than 
that of $\G^{(n)}\setminus\G'^{(n)}$. 
It remains for us to prove that the identity map 
from $\G^{(n)}\setminus\G'^{(n)}$ to $\H^{(n)}\setminus\H'^{(n)}$ 
is continuous. 
For each $i=1,2,\dots,n$, we define 
\[
C_i:=\{(g_1,g_2,\dots,g_n)\in\G^{(n)}\mid g_i\in\Delta\}. 
\]
Clearly, each $C_i$ is a closed subset of $\G^{(n)}\setminus\G'^{(n)}$ and 
the union of them is equal to $\G^{(n)}\setminus\G'^{(n)}$. 
Hence it suffices to show that the identity map on $C_i$ is continuous. 
Let $(\xi_k)_k$ be a sequence in $C_i$ 
which converges to $\xi=(g_1,g_2,\dots,g_n)\in C_i$ 
in $\G^{(n)}\setminus\G'^{(n)}$. 
Let us write $\xi_k=(g_{k,1},g_{k,2},\dots,g_{k,n})$. 
For each $j=1,2,\dots,n$, we have $g_{k,j}\to g_j$ in $\G$. 
Since $g_{k,i}$ and $g_i$ are in $\Delta$, we get $g_{k,i}\to g_i$ in $\H$. 
In particular, $r(g_{k,i})\to r(g_i)$ and $s(g_{k,i})\to s(g_i)$ in $\H$. 
If $i\neq1$, we get $s(g_{k,i-1})\to s(g_{i-1})$ in $\H$, and so, 
by virtue of the lemma above, one has $g_{k,i-1}\to g_{i-1}$ in $\H$. 
If $i\neq n$, we get $r(g_{k,i+1})\to r(g_{i+1})$ in $\H$, and so, 
by virtue of the lemma above, one has $g_{k,i+1}\to g_{i+1}$ in $\H$. 
Repeating this argument, we can conclude that 
$g_{k,j}$ converges to $g_j$ in $\H$ for every $j$. 
Therefore, $\xi_k$ converges to $\xi$ in $\H^{(n)}\setminus\H'^{(n)}$, 
as desired. 
\end{proof}

As a direct consequence of the proposition above, 
we obtain the following. 

\begin{proposition}\label{iso_sub}
Let $\G'\subset\G$ and $\H'\subset\H$ be as in Setting \ref{sub}. 
Suppose that $\G'\subset\G$ is regular. 
Let $A$ be a discrete abelian group. 
Then, we have the following diagram: 
\[
\xymatrix@M=10pt{
0 \ar[r] & \mathcal{C}(\G',A) \ar[r] & \mathcal{C}(\G,A) \ar[r] 
& \mathcal{C}(\G,A)/\mathcal{C}(\G',A) \ar[r] \ar@{=}[d] & 0 \\
0 \ar[r] & \mathcal{C}(\H',A) \ar[r] & \mathcal{C}(\H,A) \ar[r] 
& \mathcal{C}(\H,A)/\mathcal{C}(\H',A) \ar[r] & 0. 
}
\]
\end{proposition}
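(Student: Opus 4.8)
The plan is to identify each of the two quotient complexes, degree by degree, with the group of compactly supported continuous functions on a single topological space, and then to verify that this identification intertwines the two boundary operators.

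\emph{Setting up the rows.} Since $\G'$ is open in $\G$, the set $\G'^{(n)}$ is open in $\G^{(n)}$, so extension by zero realizes $C_c(\G'^{(n)},A)$ as a subgroup of $C_c(\G^{(n)},A)$; because $\G'$ is a subgroupoid, the face maps $d_i$ carry $\G'^{(n)}$ into $\G'^{(n-1)}$, so $\mathcal{C}(\G',A)$ is a subcomplex of $\mathcal{C}(\G,A)$, which gives the top row. The bottom row is handled the same way, and is in fact simpler: by Theorem \ref{Hisetale}~(3) the groupoid $\H'$ is \emph{clopen} in $\H$, hence $\H'^{(n)}$ is clopen in $\H^{(n)}$ and $C_c(\H^{(n)},A)=C_c(\H'^{(n)},A)\oplus C_c(\H^{(n)}\setminus\H'^{(n)},A)$, the first summand being a subcomplex.

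\emph{Describing the quotients degreewise.} First I would record the elementary fact that for a locally compact Hausdorff totally disconnected space $X$ with an open subset $U$ and closed complement $Z=X\setminus U$ (given the subspace topology), restriction of functions induces a natural isomorphism $C_c(X,A)/C_c(U,A)\cong C_c(Z,A)$: the kernel of $f\mapsto f|_Z$ consists exactly of those $f$ whose (clopen, compact) support lies in $U$, that is, of $C_c(U,A)$; and surjectivity holds because every compact open $L\subset Z$ is of the form $K\cap Z$ for some compact open $K\subset X$ (cover $L$ by finitely many compact open neighbourhoods contained in an open set cutting out $L$), while $C_c(Z,A)$ is spanned by the functions $a\cdot 1_L$ as $A$ is discrete. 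Applying this with $(X,U,Z)=(\G^{(n)},\G'^{(n)},\G^{(n)}\setminus\G'^{(n)})$ and, separately, with $(\H^{(n)},\H'^{(n)},\H^{(n)}\setminus\H'^{(n)})$, I conclude that $\mathcal{C}(\G,A)/\mathcal{C}(\G',A)$ in degree $n$ is canonically $C_c(\G^{(n)}\setminus\G'^{(n)},A)$ and $\mathcal{C}(\H,A)/\mathcal{C}(\H',A)$ in degree $n$ is canonically $C_c(\H^{(n)}\setminus\H'^{(n)},A)$. By the preceding proposition the spaces $\G^{(n)}\setminus\G'^{(n)}$ and $\H^{(n)}\setminus\H'^{(n)}$ are naturally homeomorphic, so these groups are canonically identified for every $n$; this is the right-hand vertical equality in the diagram.

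\emph{Matching the boundary maps — the crux.} It remains to check that the degreewise identification above is a chain map. Under it, the boundary induced on $\mathcal{C}(\G,A)/\mathcal{C}(\G',A)$ sends the class of $f$ to $\bigl(\sum_i(-1)^i d_{i*}f\bigr)\big|_{\G^{(n-1)}\setminus\G'^{(n-1)}}$, and similarly for $\H$. The face maps $d_i$ do \emph{not} restrict to maps $\G^{(n)}\setminus\G'^{(n)}\to\G^{(n-1)}\setminus\G'^{(n-1)}$, so one must argue on the induced maps directly. The key observation is that $d_i^{-1}\bigl(\G^{(n-1)}\setminus\G'^{(n-1)}\bigr)\subset\G^{(n)}\setminus\G'^{(n)}$: if $d_i(\xi)$ has a coordinate in $\Delta$, then so does $\xi$, where in the case that this coordinate is a product $g_ig_{i+1}$ one uses that $\G'$ is a subgroupoid, so $g_ig_{i+1}\in\Delta$ forces $g_i\in\Delta$ or $g_{i+1}\in\Delta$. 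Hence, for $\eta\in\G^{(n-1)}\setminus\G'^{(n-1)}$, the sum $\sum_{d_i(\xi)=\eta}f(\xi)$ defining $(d_{i*}f)(\eta)$ runs only over $\xi\in\G^{(n)}\setminus\G'^{(n)}$, where $f$ has a prescribed value and where the multiplication, source and range of $\H$ restrict those of $\G$; therefore the same sum is obtained working inside $\H$. This shows that the identification is an isomorphism of chain complexes and that the diagram commutes. I expect this final step to be the only genuine obstacle; the short exactness of the rows and the surjectivity in the quotient description (which is where local compactness and total disconnectedness enter) are routine.
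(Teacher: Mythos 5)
Your proof is correct and follows essentially the same route as the paper: realize $\mathcal{C}(\G',A)$ (resp.\ $\mathcal{C}(\H',A)$) as a subcomplex, use discreteness of $A$ to identify the quotient in degree $n$ with $C_c(\G^{(n)}\setminus\G'^{(n)},A)$ (resp.\ $C_c(\H^{(n)}\setminus\H'^{(n)},A)$), and invoke the preceding homeomorphism proposition to equate the two quotient complexes. The only difference is that you spell out details the paper leaves implicit, namely the general fact $C_c(X,A)/C_c(U,A)\cong C_c(X\setminus U,A)$ and the verification (via $d_i^{-1}(\G^{(n-1)}\setminus\G'^{(n-1)})\subset\G^{(n)}\setminus\G'^{(n)}$) that the identification intertwines the induced boundary maps, both of which are correct.
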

\begin{proof}
It is easy to see that 
$\mathcal{C}(\G',A)$ is a subcomplex of $\mathcal{C}(\G,A)$. 
Since $A$ is discrete, 
the quotient complex $\mathcal{C}(\G,A)/\mathcal{C}(\G',A)$ 
consists of the abelian groups $(C_c(\G^{(n)}\setminus\G'^{(n)},A))_n$. 
The same is true for groupoids $\H$ and $\H'$. 
The proposition above tells us that 
$C_c(\G^{(n)}\setminus\G'^{(n)},A)$ equals 
$C_c(\H^{(n)}\setminus\H'^{(n)},A)$ for all $n$, 
which completes the proof. 
\end{proof}

We let $H_*(\G/\G',A)$ (resp. $H_*(\H/\H',A)$) 
symbolically denote the homology groups of 
the chain complex $\mathcal{C}(\G,A)/\mathcal{C}(\G',A)$ 
(resp. $\mathcal{C}(\H,A)/\mathcal{C}(\H',A)$). 

\begin{theorem}\label{LES_sub}
Let $\G'\subset\G$ and $\H'\subset\H$ be as in Setting \ref{sub}. 
Suppose that $\G'\subset\G$ is regular. 
Let $A$ be a discrete abelian group. 
Then we have the following diagram: 
\[
\xymatrix@M=10pt{
\dots \ar[r] & H_n(\G',A) \ar[r] & H_n(\G,A) \ar[r] 
& H_n(\G/\G',A) \ar[r] \ar@{=}[d] & \dots \\
\dots \ar[r] & H_n(\H',A) \ar[r] & H_n(\H,A) \ar[r] 
& H_n(\H/\H',A) \ar[r] & \dots, 
}
\]
where the two horizontal sequences are exact. 
\end{theorem}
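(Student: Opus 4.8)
The plan is to deduce Theorem \ref{LES_sub} almost immediately from Proposition \ref{iso_sub} together with the standard homological algebra of short exact sequences of chain complexes. First I would observe that each row of the diagram in Proposition \ref{iso_sub} is a short exact sequence of chain complexes of abelian groups: the top row because $\mathcal{C}(\G',A)$ is a subcomplex of $\mathcal{C}(\G,A)$ (the inclusion $\G'^{(n)}\subset\G^{(n)}$ being clopen, so that extension-by-zero gives an injective chain map $C_c(\G'^{(n)},A)\hookrightarrow C_c(\G^{(n)},A)$ compatible with the face maps), and the bottom row for the same reason applied to $\H'\subset\H$, which is legitimate since $\H'$ is a clopen subgroupoid of the \'etale groupoid $\H$ by Theorem \ref{Hisetale}.

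Next I would invoke the fundamental theorem of homological algebra: a short exact sequence of chain complexes $0\to C'\to C\to C/C'\to 0$ induces a natural long exact sequence in homology, with connecting homomorphisms $\delta_n\colon H_n(C/C',A)\to H_{n-1}(C',A)$. Applying this to each row yields the two horizontal long exact sequences appearing in the statement. The essential point — and the only thing that does real work — is the \emph{naturality} of this construction: the vertical equality $\mathcal{C}(\G,A)/\mathcal{C}(\G',A)=\mathcal{C}(\H,A)/\mathcal{C}(\H',A)$ furnished by Proposition \ref{iso_sub} is, level by level, the identity map $C_c(\G^{(n)}\setminus\G'^{(n)},A)=C_c(\H^{(n)}\setminus\H'^{(n)},A)$, and it is a genuine chain map because the face maps $d_i$ agree on $\G^{(n)}\setminus\G'^{(n)}=\H^{(n)}\setminus\H'^{(n)}$ (they are restrictions of the groupoid multiplication and of the source/range maps, all of which coincide for $\G$ and $\H$ on this common set). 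Hence the identity on the quotient complexes commutes with $\partial_n$, and therefore induces the vertical identifications $H_n(\G/\G',A)=H_n(\H/\H',A)$ that make the squares in the displayed ladder commute.

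With commutativity of the ladder in hand, the theorem is proved: the top row is exact, the bottom row is exact, and the vertical maps on the quotient-homology terms are equalities, which is precisely the assertion. I would not expand the explicit definition of $\delta_n$ beyond remarking that under the identification of quotient complexes it is literally the same map for $\G/\G'$ and for $\H/\H'$, so that the diagram commutes on the nose rather than merely up to isomorphism.

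The main obstacle, such as it is, is purely bookkeeping: one must be slightly careful that the "identification" in Proposition \ref{iso_sub} really is a morphism of short exact sequences of complexes and not merely an isomorphism of the quotient objects in isolation — i.e. that it respects the boundary operators. But this is immediate once one notes that the boundary operator on the quotient complex is induced by the same face maps $d_i$ restricted to the same underlying set $\G^{(n)}\setminus\G'^{(n)}=\H^{(n)}\setminus\H'^{(n)}$; there is no subtlety hidden in the topology here because $C_c$ of a locally compact totally disconnected space with discrete coefficients $A$ depends only on the space, and the relevant spaces have already been shown to be homeomorphic (indeed equal with identical topology) in the preceding proposition. So the proof is genuinely a two-line application of the long exact sequence functor to the diagram of Proposition \ref{iso_sub}.
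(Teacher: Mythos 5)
Your proposal is correct and matches the paper's treatment: the paper gives no separate argument for Theorem \ref{LES_sub}, deducing it exactly as you do from Proposition \ref{iso_sub} together with the standard long exact sequence associated to a short exact sequence of chain complexes. Your extra remark about the identification of quotient complexes respecting the boundary operators is precisely the content already secured by Proposition \ref{iso_sub}, so nothing further is needed.
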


\section{Factor groupoid situation}

In this subsection, 
we discuss a long exact sequence of homology groups 
of \'etale groupoids arising from the factor groupoid situation. 
At first, apart from \'etale groupoids, 
we introduce the notion of regularity for continuous surjections 
between locally compact metric spaces 
and establish various properties in Section 4.1. 
Then we will apply them to the factor groupoid situation 
in Section 4.2.

\subsection{Regular maps}

In this subsection, we collect several facts 
about regular maps between locally compact metric spaces. 
Most of them are reformulation of 
arguments given in \cite[Section 7]{Pu21Munster}. 

Let $(X,\d)$ be a metric space. 
For a compact subset $A\subset X$, its diameter is defined by 
\[
\diam(A):=\sup\{\d(x,y)\mid x,y\in A\}. 
\]
The Hausdorff metric for compact subsets is defined by 
\[
\dH(A,B):=\max\left\{\sup_{x\in A}\inf_{y\in B}\d(x,y),\ 
\sup_{y\in B}\inf_{x\in A}\d(x,y)\right\}
\]
where $A,B\subset X$ are compact. 
It is easy to see that the inequality 
\[
\lvert\diam(A)-\diam(B)\rvert\leq2\dH(A,B)
\]
holds. 

The regularity of a proper surjection $\pi:X\to X'$ 
is defined as follows. 

\begin{definition}[{\cite[Definition 7.6, Definition 7.7]{Pu21Munster}}]
\label{defofregular}
Let $(X,\d)$ and $(X',\d')$ be locally compact metric spaces 
and let $\pi:X\to X'$ be a continuous proper surjection. 
\begin{enumerate}
\item We say that $\pi$ is regular 
if the following condition is satisfied: 
for every $x'\in X'$ and $\ep>0$ 
there exists an open neighborhood $U'$ of $x'$ 
such that for any $y'\in U'$ either 
\[
\dH(\pi^{-1}(x'),\pi^{-1}(y'))<\ep
\quad\text{or}\quad 
\diam\pi^{-1}(y')<\ep
\]
holds. 
\item Set 
\[
X'_\pi:=\{x'\in X'\mid\#\pi^{-1}(x')>1\}
\]
and $X_\pi:=\pi^{-1}(X_\pi')$. 
We endow $X'_\pi$ with the metric 
\[
\d'_\pi(x',y'):=\dH(\pi^{-1}(x'),\pi^{-1}(y'))
\quad\forall x',y'\in X'_\pi, 
\]
and $X_\pi$ with the metric 
\[
\d_\pi(x,y):=\d(x,y)+\d'_\pi(\pi(x),\pi(y))
\quad\forall x,y\in X_\pi. 
\]
\end{enumerate}
\end{definition}

Our first task is to show that 
the metric spaces $X_\pi$ and $X'_\pi$ are locally compact 
under the assumption that $\pi:X\to X'$ is regular. 

\begin{remark}
In \cite[Definition 7.7]{Pu21Munster} 
the regularity condition is required only for $x'\in X'_\pi$, 
and the definition above is seemingly stronger 
than \cite[Definition 7.7]{Pu21Munster}. 
However, the two definitions are the same, 
because for $x'\in X'\setminus X'_\pi$ 
we can always find an open neighborhood $U'$ of $x'$ 
such that $\diam\pi^{-1}(y')<\ep$ holds 
for any $y'\in U'$. 
\end{remark}

It is easy to see the following. 

\begin{lemma}[{\cite[Proposition 7.8]{Pu21Munster}}]
\label{finertop}
Let $(X,\d)$ and $(X',\d')$ be locally compact metric spaces 
and let $\pi:X\to X'$ be a continuous proper surjection. 
The inclusion maps $(X_\pi,\d_\pi)\hookrightarrow(X,\d)$ 
and $(X'_\pi,\d'_\pi)\hookrightarrow(X',\d')$ are continuous. 
\end{lemma}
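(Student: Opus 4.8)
The plan is to verify the two continuity assertions separately; both will follow by directly unwinding the definitions of the metrics $\d_\pi$ and $\d'_\pi$, together with properness and surjectivity of $\pi$.

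For the inclusion $(X_\pi,\d_\pi)\hookrightarrow(X,\d)$ I would simply observe that, by the very definition of $\d_\pi$, one has
\[
\d(x,y)\le\d(x,y)+\d'_\pi(\pi(x),\pi(y))=\d_\pi(x,y)
\]
for all $x,y\in X_\pi$. Hence this inclusion map is $1$-Lipschitz, and in particular continuous. Nothing more is needed here.

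For the inclusion $(X'_\pi,\d'_\pi)\hookrightarrow(X',\d')$, since both spaces are metric it suffices to check sequential continuity. So I would take a sequence $(x'_k)_k$ in $X'_\pi$ with $\d'_\pi(x'_k,x')\to0$ for some $x'\in X'_\pi$, that is, $\dH(\pi^{-1}(x'_k),\pi^{-1}(x'))\to0$, and show $x'_k\to x'$ in $(X',\d')$. First I would fix a point $x_0\in\pi^{-1}(x')$, which exists because $\pi$ is surjective. From the definition of the Hausdorff metric,
\[
\inf_{y\in\pi^{-1}(x'_k)}\d(x_0,y)\le\dH(\pi^{-1}(x'_k),\pi^{-1}(x')).
\]
Since $\pi$ is proper, each fibre $\pi^{-1}(x'_k)$ is a nonempty compact set, so this infimum is attained at some $y_k\in\pi^{-1}(x'_k)$; therefore $\d(x_0,y_k)\to0$, i.e.\ $y_k\to x_0$ in $(X,\d)$. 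Applying the continuity of $\pi$ then yields $x'_k=\pi(y_k)\to\pi(x_0)=x'$ in $(X',\d')$, as required.

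The only points needing a moment's care are that the fibres $\pi^{-1}(x'_k)$ are nonempty (by surjectivity) and compact (by properness), so that the infimum above is finite and attained, and that sequential continuity is enough in the metric setting; there is no real obstacle beyond this bookkeeping, which is why the statement is recorded as being easy.
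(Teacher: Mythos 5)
Your proof is correct, and it fills in exactly the details the paper omits (the lemma is stated with ``It is easy to see'' and no proof is given): the first inclusion is $1$-Lipschitz since $\d\le\d_\pi$ by definition, and for the second the Hausdorff-distance bound $\inf_{y\in\pi^{-1}(x'_k)}\d(x_0,y)\le\dH(\pi^{-1}(x'_k),\pi^{-1}(x'))$ for a fixed $x_0\in\pi^{-1}(x')$ together with continuity of $\pi$ does the job. (As a minor remark, you do not even need the infimum to be attained—choosing $y_k$ within $1/k$ of the infimum would suffice—but compactness of the fibres, which you correctly derive from properness, makes the attainment harmless.)
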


The following is the crucial conclusion of the regularity. 

\begin{lemma}\label{Kdeltaiscpt}
Let $(X,\d)$ and $(X',\d')$ be locally compact metric spaces 
and let $\pi:X\to X'$ be a continuous proper surjection. 
Suppose that $\pi$ is regular. 
For any compact subset $K$ of $X'$ and $\delta>0$, the set 
\[
K_\delta:=K\cap\{x'\in X'_\pi\mid\diam\pi^{-1}(x')\geq\delta\}
\]
is compact in $X'_\pi$. 
\end{lemma}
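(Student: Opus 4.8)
The plan is to prove that $K_\delta$ is sequentially compact in the metric space $(X'_\pi,\d'_\pi)$; since that space is metrizable, sequential compactness yields compactness. So I would take an arbitrary sequence $(x'_k)_k$ in $K_\delta$. All these points lie in the compact set $K\subset X'$, so after passing to a subsequence I may assume $x'_k\to x'$ in $(X',\d')$ for some $x'\in K$. The aim is then to show that $x'\in X'_\pi$ with $\diam\pi^{-1}(x')\geq\delta$ (so that $x'\in K_\delta$) and that $\d'_\pi(x'_k,x')\to0$. Note that none of this is automatic: the $\d'_\pi$-topology is finer than the relative topology from $\d'$ (Lemma \ref{finertop}), and a priori $x'$ is not even known to belong to $X'_\pi$.

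The decisive step is to invoke the regularity of $\pi$ at the limit point $x'$. Fix $\ep$ with $0<\ep<\delta$ and let $U'\ni x'$ be the open neighbourhood furnished by Definition \ref{defofregular}(1). Since $x'_k\to x'$ in $(X',\d')$, we have $x'_k\in U'$ for all sufficiently large $k$, so for all such $k$ either $\dH(\pi^{-1}(x'),\pi^{-1}(x'_k))<\ep$ or $\diam\pi^{-1}(x'_k)<\ep$. But $x'_k\in K_\delta$ forces $\diam\pi^{-1}(x'_k)\geq\delta>\ep$, which rules out the second alternative. Hence $\dH(\pi^{-1}(x'),\pi^{-1}(x'_k))<\ep$ for all large $k$, and since $\ep\in(0,\delta)$ was arbitrary we conclude $\dH(\pi^{-1}(x'),\pi^{-1}(x'_k))\to0$.

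The remainder is bookkeeping. Because $\pi$ is proper, the fibres $\pi^{-1}(x')$ and $\pi^{-1}(x'_k)$ are compact, so the elementary inequality $\lvert\diam A-\diam B\rvert\leq2\dH(A,B)$ gives $\diam\pi^{-1}(x'_k)\to\diam\pi^{-1}(x')$; as each $\diam\pi^{-1}(x'_k)\geq\delta$, we obtain $\diam\pi^{-1}(x')\geq\delta$. In particular $\#\pi^{-1}(x')>1$, hence $x'\in X'_\pi$, and then $x'\in K_\delta$. Finally, $x'$ and all the $x'_k$ lie in $X'_\pi$, so $\d'_\pi(x'_k,x')=\dH(\pi^{-1}(x'_k),\pi^{-1}(x'))\to0$, i.e.\ $x'_k\to x'$ in $(X'_\pi,\d'_\pi)$. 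This establishes the required sequential compactness.

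I do not anticipate any serious obstacle. The one point needing care is exactly that the limit point $x'$ is not assumed in advance to lie in $X'_\pi$, so its membership --- and the diameter lower bound that places it in $K_\delta$ --- must be extracted after the fact from the Hausdorff convergence together with the diameter inequality, rather than taken for granted; one must also remember to use properness of $\pi$ so that the fibres, and hence their diameters, are honestly compact and finite.
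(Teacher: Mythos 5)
Your proof is correct and follows essentially the same route as the paper's: pass to a subsequence converging in $X'$, then invoke regularity at the limit point, with $\diam\pi^{-1}(x'_k)\geq\delta>\ep$ ruling out the small-diameter alternative, to upgrade to convergence in $\d'_\pi$. The only (harmless) difference is the order of steps: the paper first establishes $\diam\pi^{-1}(x')\geq\liminf_k\diam\pi^{-1}(x'_k)\geq\delta$ (hence $x'\in X'_\pi$) and then applies regularity, whereas you apply regularity at the a priori arbitrary point $x'$ (legitimate, since Definition \ref{defofregular} imposes the condition at every point of $X'$) and recover the diameter bound afterwards from $\lvert\diam A-\diam B\rvert\leq2\dH(A,B)$.
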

\begin{proof}
Clearly $K_\delta$ is closed. 
Let $(x'_k)_k$ be any sequence in $K_\delta$. 
Since $K$ is compact in $X'$, passing to a subsequence, 
we may assume that $(x'_k)_k$ converges to $x'$ in $X'$. 
We have 
\[
\diam\pi^{-1}(x')
\geq\liminf_{k\to\infty}\diam\pi^{-1}(x'_k)\geq\delta. 
\]
In particular, $x'$ belongs to $X'_\pi$. 
It suffices to show that $(x'_k)_k$ converges to $x'$ in $X'_\pi$. 
Let $\ep>0$. Assume $\ep<\delta$. 
By the regularity of $\pi$, 
we may find an open neighborhood $U'$ of $x'$ 
such that for any $y'\in U'$ either 
\[
\dH(\pi^{-1}(x'),\pi^{-1}(y'))<\ep
\quad\text{or}\quad 
\diam\pi^{-1}(y')<\ep
\]
holds. 
When $k$ is sufficiently large, $x'_k$ is in $U'$. 
Since $\diam\pi^{-1}(x'_k)\geq\delta>\ep$, 
we get $\dH(\pi^{-1}(x'),\pi^{-1}(x'_k))<\ep$ 
for all sufficiently large $k$. 
Therefore $(x'_k)_k$ converges to $x'$ in $X'_\pi$. 
\end{proof}

The following proposition says that 
$X_\pi$ and $X'_\pi$ are locally compact when $\pi$ is regular. 
This is discussed in \cite[Theorem 7.9]{Pu21Munster} 
in the setting of groupoids. 
We include a proof here for the reader's convenience. 
This proposition will be used in the next subsection 
to show that the groupoids $\G_\pi$ and $\G'_\pi$ are \'etale 
(Proposition \ref{Gpiisetale}). 

\begin{proposition}[{\cite[Theorem 7.9]{Pu21Munster}}]
\label{consequofregular}
Let $(X,\d)$ and $(X',\d')$ be locally compact metric spaces 
and let $\pi:X\to X'$ be a continuous proper surjection. 
Suppose that $\pi$ is regular. 
Then $(X_\pi,\d_\pi)$ and $(X'_\pi,\d'_\pi)$ are 
locally compact metric spaces, 
and $\pi:(X_\pi,\d_\pi)\to(X'_\pi,\d'_\pi)$ is 
a continuous proper open map. 
\end{proposition}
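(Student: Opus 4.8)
The plan is to prove the five assertions---that $X'_\pi$ and $X_\pi$ are locally compact and that $p:=\pi|_{X_\pi}\colon X_\pi\to X'_\pi$ is continuous, proper and open---in an order in which each rests only on the preceding ones, the substantial inputs being Lemma~\ref{finertop} and Lemma~\ref{Kdeltaiscpt} (only through the latter does the regularity hypothesis enter). Continuity of $p$ is immediate, because $\d_\pi(x,y)=\d(x,y)+\d'_\pi(\pi(x),\pi(y))\geq\d'_\pi(p(x),p(y))$, so $p$ is $1$-Lipschitz. For properness, let $L\subset X'_\pi$ be compact; by Lemma~\ref{finertop} it is compact in $X'$, hence $\pi^{-1}(L)$ is compact in $X$ since $\pi$ is proper, and one checks $\pi^{-1}(L)\subset X_\pi$ and $p^{-1}(L)=\pi^{-1}(L)$. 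Given a sequence in $p^{-1}(L)$, I would pass to a subsequence converging in $X$ to some $x\in\pi^{-1}(L)$, and then to a further subsequence whose $\pi$-images converge in $X'_\pi$; by Lemma~\ref{finertop} these images also converge in $X'$, so their limit is forced to be $\pi(x)$. Along this subsequence one has convergence to $x$ in $\d$ and to $\pi(x)$ in $\d'_\pi$ simultaneously, i.e.\ convergence in $\d_\pi$, so $p^{-1}(L)$ is sequentially compact, hence compact, and $p$ is proper.

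The crux is the local compactness of $X'_\pi$. Fix $x'\in X'_\pi$ and set $\delta_0:=\diam\pi^{-1}(x')>0$. Using local compactness of $X'$, choose a compact neighbourhood $K$ of $x'$ in $X'$ and put
\[
K_{\delta_0/2}:=K\cap\{y'\in X'_\pi\mid\diam\pi^{-1}(y')\geq\delta_0/2\},
\]
which is compact in $X'_\pi$ by Lemma~\ref{Kdeltaiscpt}. I would then argue that $K_{\delta_0/2}$ is a neighbourhood of $x'$ in $X'_\pi$: the set $W$ of all $y'\in X'_\pi$ that lie in the interior of $K$ (relative to $X'$) and satisfy $\d'_\pi(x',y')<\delta_0/4$ is open in $X'_\pi$ (the first condition cuts out an open subset by Lemma~\ref{finertop}, the second a $\d'_\pi$-ball) and contains $x'$, and for $y'\in W$ the estimate $\lvert\diam\pi^{-1}(x')-\diam\pi^{-1}(y')\rvert\leq 2\dH(\pi^{-1}(x'),\pi^{-1}(y'))=2\d'_\pi(x',y')<\delta_0/2$ yields $\diam\pi^{-1}(y')>\delta_0/2$, so $W\subset K_{\delta_0/2}$. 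Hence $X'_\pi$ is locally compact.

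Local compactness of $X_\pi$ follows at once: for $x\in X_\pi$ take a compact neighbourhood $L$ of $\pi(x)$ in $X'_\pi$ and note that the $p$-preimage of the interior of $L$ (relative to $X'_\pi$) is open in $X_\pi$ by continuity of $p$, contains $x$, and lies in the compact set $p^{-1}(L)$. For openness of $p$, take $U\subset X_\pi$ open, $x\in U$, and $\eta>0$ with $\{z\in X_\pi\mid\d_\pi(x,z)<\eta\}\subset U$; if $y'\in X'_\pi$ satisfies $\d'_\pi(\pi(x),y')<\eta/2$, then $\dH(\pi^{-1}(\pi(x)),\pi^{-1}(y'))<\eta/2$, so (as $x\in\pi^{-1}(\pi(x))$ and $\pi^{-1}(y')$ is compact) there is $y\in\pi^{-1}(y')$ with $\d(x,y)<\eta/2$, whence $\d_\pi(x,y)=\d(x,y)+\d'_\pi(\pi(x),y')<\eta$, so $y\in U$ and $y'=p(y)\in p(U)$; thus $p(U)$ is open. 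The one genuinely delicate step is the local compactness of $X'_\pi$---specifically, checking that the compact sets furnished by Lemma~\ref{Kdeltaiscpt} are neighbourhoods in the refined metric $\d'_\pi$, which is exactly where the regularity of $\pi$ is exploited---while the rest amounts to unwinding the definitions of $\d_\pi$ and $\d'_\pi$ in combination with Lemma~\ref{finertop}.
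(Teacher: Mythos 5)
Your proposal is correct and follows essentially the same route as the paper: local compactness of $X'_\pi$ via the compact sets from Lemma \ref{Kdeltaiscpt} together with the diameter--Hausdorff estimate, properness via Lemma \ref{finertop} and a subsequence argument, local compactness of $X_\pi$ from properness, and openness by the same $\ep/2$-estimate on $\d_\pi$. The only differences (order of the steps, and using the interior of $K$ rather than a small $\d'_\pi$-ball inside $K$ to see that $K_{\delta_0/2}$ is a neighbourhood) are cosmetic.
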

\begin{proof}
First, we show that $X'_\pi$ is locally compact. 
Let $x'$ be in $X'_\pi$. 
Since $X'$ is locally compact, 
there exists a compact neighborhood $K$ of $x'$ in $X'$. 
Put $\delta:=\diam\pi^{-1}(x')$ and define 
\[
K_{\delta/2}
:=K\cap\{y'\in X'_\pi\mid\diam\pi^{-1}(y')\geq\delta/2\}. 
\]
By Lemma \ref{Kdeltaiscpt}, 
$K_{\delta/2}$ is compact in $X'_\pi$. 
Let us check that $x'$ is an interior point of $K_{\delta/2}$. 
By Lemma \ref{finertop}, there exists $\ep>0$ such that 
\[
\d'_\pi(x',y')<\ep\implies y'\in K
\]
holds for all $y'\in X'_\pi$. 
Let $\ep':=\min\{\ep,\delta/4\}$. 
Suppose that $y'\in X'_\pi$ satisfies $\d'_\pi(x',y')<\ep'$. 
Evidently $y'$ is in $K$, and 
\begin{align*}
\diam\pi^{-1}(y')
&\geq\diam\pi^{-1}(x')-2\dH(\pi^{-1}(x'),\pi^{-1}(y'))\\
&=\diam\pi^{-1}(x')-2\d'_\pi(x',y')\geq\delta/2. 
\end{align*}
Hence $y'$ is in $K_{\delta/2}$, 
and so $x'$ is an interior point of $K_{\delta/2}$. 

Next, we show that $\pi:X_\pi\to X'_\pi$ is proper. 
Let $K\subset X'_\pi$ be a compact subset and 
let $(x_k)_k$ be any sequence in $\pi^{-1}(K)$. 
By Lemma \ref{finertop}, $K$ is compact in $X'$, 
and hence $\pi^{-1}(K)$ is compact in $X$. 
It follows that, by passing to a subsequence, 
we may assume that $(x_k)_k$ converges to $x$ in $X$ 
and that $(\pi(x_k))_k$ converges to $\pi(x)$ in $X'_\pi$. 
From the definition of the metric $\d_\pi$, 
we can conclude that $(x_k)_k$ converges to $x$ in $X_\pi$. 
Thus, $\pi^{-1}(K)$ is a compact subset of $X_\pi$. 

By the definition of $\d_\pi$ and $\d'_\pi$, 
the map $\pi:X_\pi\to X'_\pi$ is clearly continuous. 
Let $x\in X_\pi$. 
There exists a compact neighborhood 
$K\subset X'_\pi$ of $\pi(x)$ in $X'_\pi$, 
because $X'_\pi$ is locally compact. 
Then $\pi^{-1}(K)$ is a compact neighborhood of $x$, 
which implies that $X_\pi$ is also locally compact. 

Finally, let us prove that $\pi:X_\pi\to X'_\pi$ is open. 
Let $V$ be an open subset of $X_\pi$ and let $x\in V$. 
We would like to prove that 
$\pi(x)$ is an interior point of $\pi(V)$ in $X'_\pi$. 
Find $\ep>0$ so that 
\[
\d_\pi(x,y)<\ep\implies y\in V
\]
holds for all $y\in X_\pi$. 
We show 
\[
\d'_\pi(\pi(x),z)<\ep/2\implies z\in\pi(V)
\]
holds for all $z\in X'_\pi$. 
If $\d'_\pi(\pi(x),z)<\ep/2$, 
then $\dH(\pi^{-1}(\pi(x)),\pi^{-1}(z))<\ep/2$. 
In particular, 
there exists $y\in\pi^{-1}(z)$ such that $\d(x,y)<\ep/2$. 
It follows that 
\[
\d_\pi(x,y)
=\d(x,y)+\d'_\pi(\pi(x),\pi(y))
=\d(x,y)+\d'_\pi(\pi(x),z)<\ep. 
\]
Hence $y$ is in $V$, and $z=\pi(y)$ is in $\pi(V)$. 
\end{proof}

\begin{remark}
The definition of the regularity apparently involves 
the metrics $\d$ and $\d'$. 
But, this dependence is not essential, 
and the regularity of $\pi$ is a topological property. 
Furthermore, the topologies on $X_\pi$ and $X'_\pi$ do not 
depend on the choice of $\d$ and $\d'$, neither. 
These follow from the fact that 
the metric topology induced by the Hausdorff distance 
coincides with the Vietoris topology 
on the family of non-empty compact subsets of the space. 
We refer the reader to 
\cite[Theorem 3.3]{MR42109} or \cite[Theorem 8]{MR1930656}. 
\end{remark}

Next, we show that regularity is preserved by local homeomorphisms. 
We will use this proposition to prove Lemma \ref{pi^nregular}. 

\begin{proposition}\label{regularVSlocalhomeo}
For $i=1,2$, 
let $(X_i,\d_i)$ and $(X'_i,\d'_i)$ be locally compact metric spaces 
and let $\pi_i:X_i\to X'_i$ be a continuous proper surjection. 
Let $\phi:X_1\to X_2$ and $\phi':X'_1\to X'_2$ be 
surjective local homeomorphisms 
such that $\pi_2\circ\phi=\phi'\circ\pi_1$. 
Assume further that 
the restriction $\phi:\pi_1^{-1}(x')\to\pi_2^{-1}(\phi'(x'))$ is bijective 
for every $x'\in X'_1$. 
Then the following are equivalent. 
\begin{enumerate}
\item $\pi_1$ is regular. 
\item $\pi_2$ is regular. 
\end{enumerate}
Moreover, when $\pi_1$ and $\pi_2$ are regular, 
both $\phi:(X_1)_{\pi_1}\to(X_2)_{\pi_2}$ and 
$\phi':(X'_1)_{\pi_1}\to(X'_2)_{\pi_2}$ are local homeomorphisms. 
\end{proposition}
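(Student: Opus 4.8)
The plan is to prove the equivalence (1) $\iff$ (2) by a direct $\ep$-$\delta$ argument, using the hypothesis that $\phi$ restricts to a bijection on each fiber of $\pi_1$ and that $\phi,\phi'$ are local homeomorphisms. The key observation is that, locally, $\phi$ is an isometry in a suitable sense: given a point of $X_1$ (or $X'_1$), one can choose small enough neighborhoods on which $\phi$ (resp. $\phi'$) is a homeomorphism onto its image, and since the metrics are only used to induce topologies (by the remark after Proposition \ref{consequofregular}, regularity and the topologies on $X_\pi$, $X'_\pi$ are topological invariants), one may as well assume $\phi$ is a local isometry near the relevant fibers. Concretely, for a fiber $\pi_1^{-1}(x')$, compactness (by properness of $\pi_1$) lets us cover it by finitely many open sets on which $\phi$ is injective, and then $\phi$ maps $\pi_1^{-1}(x')$ bijectively and bicontinuously onto $\pi_2^{-1}(\phi'(x'))$; a standard Lebesgue-number/compactness argument shows that on a neighborhood of $x'$ the Hausdorff distances $\dH(\pi_1^{-1}(x'),\pi_1^{-1}(y'))$ and $\dH(\pi_2^{-1}(\phi'(x')),\pi_2^{-1}(\phi'(y')))$ are comparable up to constants, and similarly for diameters. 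Transporting the defining condition of regularity across $\phi'$ (using that $\phi'$ is an open map, being a local homeomorphism, and surjective) then yields the equivalence.

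First I would fix notation: write $\phi_*$ for the fiber bijection $\pi_1^{-1}(x')\to\pi_2^{-1}(\phi'(x'))$. Step one is the local comparison lemma: for each $x'\in X'_1$ and each $\eta>0$ there is a neighborhood $W'$ of $x'$ and constants $0<c\le C$ such that for all $y'\in W'$,
\[
c\,\dH\!\left(\pi_1^{-1}(x'),\pi_1^{-1}(y')\right)\le \dH\!\left(\pi_2^{-1}(\phi'(x')),\pi_2^{-1}(\phi'(y'))\right)\le C\,\dH\!\left(\pi_1^{-1}(x'),\pi_1^{-1}(y')\right),
\]
and likewise $c\,\diam\pi_1^{-1}(y')\le\diam\pi_2^{-1}(\phi'(y'))\le C\,\diam\pi_1^{-1}(y')$; this is where compactness of fibers (from properness) and the local-homeomorphism property of $\phi$ enter. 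Step two is to deduce (1)$\Rightarrow$(2): given $x_2'\in X_2'$ and $\ep>0$, pick $x_1'$ with $\phi'(x_1')=x_2'$ (surjectivity), apply regularity of $\pi_1$ to get a neighborhood $U'_1$ of $x_1'$, intersect with the $W'$ from step one, push forward by the open map $\phi'$ to a neighborhood $U'_2$ of $x_2'$, and shrink so that every $y_2'\in U'_2$ is $\phi'(y_1')$ for some $y_1'\in U'_1\cap W'$; the dichotomy for $\pi_1$ at $y_1'$ transfers to the dichotomy for $\pi_2$ at $y_2'$ after adjusting $\ep$ by the constant $C$. Step three, (2)$\Rightarrow$(1), is symmetric, using $c$ in place of $C$ and the fact that $\phi'$ is continuous so that preimages of neighborhoods of $x_2'$ contain neighborhoods of $x_1'$. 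Step four: once both $\pi_i$ are regular, the maps $\phi:(X_1)_{\pi_1}\to(X_2)_{\pi_2}$ and $\phi':(X'_1)_{\pi_1}\to(X'_2)_{\pi_2}$ are local homeomorphisms — this follows because $\phi'$ restricted to $X'_\pi$ is, by step one, bi-Lipschitz from $\d'_{\pi_1}$ to (the pullback of) $\d'_{\pi_2}$ locally, hence continuous and open for those finer topologies, and similarly for $\phi$ using $\d_\pi=\d+\d'_\pi$; one also needs that $\phi'$ maps $X'_{1,\pi_1}$ onto $X'_{2,\pi_2}$ and $\phi$ maps $(X_1)_{\pi_1}$ onto $(X_2)_{\pi_2}$, which is immediate from the fiber-bijection hypothesis (a point of $X'_i$ has multiple preimages under $\pi_i$ iff its image/preimage under $\phi'$ does).

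The main obstacle is step one, the local comparison of Hausdorff distances and diameters under $\phi$. The subtlety is that $\phi$ is only a \emph{local} homeomorphism, so it need not be injective on a whole neighborhood of a fiber $\pi_1^{-1}(x')$, only near each individual point; since the fiber is compact one gets a finite cover and a Lebesgue number, but one must be careful that when $y'$ is close to $x'$ the fiber $\pi_1^{-1}(y')$ stays inside the union of these small charts — this needs the continuity of the fiber assignment, i.e. upper semicontinuity of $y'\mapsto\pi_1^{-1}(y')$ in the Hausdorff sense, which holds because $\pi_1$ is proper and continuous. Once $\pi_1^{-1}(y')$ is trapped inside the charts, the bijection $\phi_*$ between $\pi_1^{-1}(y')$ and $\pi_2^{-1}(\phi'(y'))$ is realized chartwise by homeomorphisms, and on each compact chart a homeomorphism is bi-Lipschitz-comparable — more precisely, for the \emph{topological} conclusion one does not even need Lipschitz constants: it suffices that $\phi'$ carries the Vietoris/Hausdorff topology on fibers near $x'$ homeomorphically, which the remark after Proposition \ref{consequofregular} has already reduced to a purely topological statement about the Vietoris topology on compact subsets. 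I would therefore phrase step one in terms of the Vietoris topology to sidestep the metric bookkeeping: the fiber map $X'_1\to\mathcal K(X_1)$ composed with the induced map $\mathcal K(X_1)\to\mathcal K(X_2)$ agrees with the fiber map $X'_2\to\mathcal K(X_2)$ restricted along $\phi'$, all continuous, and this transports regularity (which is a condition on the continuity of these fiber maps relative to the "diameter $\ge\delta$" loci) from one side to the other.
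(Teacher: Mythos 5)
Your step one, as stated, is not correct: a homeomorphism between compact metric spaces need not be bi-Lipschitz (think of $t\mapsto t^3$ near $0$), so there are in general no constants $0<c\le C$ comparing $\dH(\pi_1^{-1}(x'),\pi_1^{-1}(y'))$ with $\dH(\pi_2^{-1}(\phi'(x')),\pi_2^{-1}(\phi'(y')))$, nor the diameters; the phrase ``on each compact chart a homeomorphism is bi-Lipschitz-comparable'' is false. This particular defect is repairable, because regularity only requires an $\ep$--$\delta$ transfer: uniform continuity of $\phi$ on the compact set $\pi_1^{-1}(K')$ (available since $\pi_1$ is proper) already gives ``$\d_1$-small $\Rightarrow$ $\d_2$-small'', which is all that (1)$\Rightarrow$(2) needs, and this is how the paper argues. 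Your fallback to the Vietoris topology does not substitute for an argument either: the remark you invoke only says that regularity of a \emph{fixed} map is independent of the choice of metrics on the \emph{same} spaces; it says nothing about transporting regularity across the non-injective map $\phi$, and the parenthetical reformulation of regularity in terms of ``continuity of the fiber maps relative to the diameter loci'' is nowhere established.

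The more serious gap is in (2)$\Rightarrow$(1) and in the final assertion about $\phi:(X_1)_{\pi_1}\to(X_2)_{\pi_2}$. There you must pass from smallness of $\diam\pi_2^{-1}(\phi'(y'))$ or of $\dH(\pi_2^{-1}(\phi'(x')),\pi_2^{-1}(\phi'(y')))$ back to the corresponding smallness in $X_1$, and for that you need $\phi$ to be injective, with uniformly continuous inverse, on an \emph{entire} neighborhood $\pi_1^{-1}(V')$ of the fiber. Chartwise injectivity plus a Lebesgue number does not give this: two points of $\pi_1^{-1}(y')$ lying in different charts, far apart in $X_1$, could have images arbitrarily close in $X_2$ (fiberwise bijectivity only makes the images distinct, not far apart), which destroys the diameter and Hausdorff-distance estimates in the backward direction. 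The paper closes exactly this hole by shrinking $V'$ so that $\phi'|_{V'}$ is injective and then using $\pi_2\circ\phi=\phi'\circ\pi_1$ together with the fiberwise bijection to conclude that $\phi$ is a homeomorphism from $\pi_1^{-1}(V')$ onto $\pi_2^{-1}(\phi'(V'))$; uniform continuity of its inverse on this compact set then produces the required $\delta$. The same restricted homeomorphism is what drives the last statement (that $\phi$ and $\phi'$ are local homeomorphisms for the $\d_{\pi}$- and $\d'_{\pi}$-topologies), so your sketch of that part inherits both problems: the unavailable Lipschitz constants and the missing injectivity of $\phi$ on a full neighborhood of a fiber.
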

\[
\xymatrix@M=10pt{
X_1 \ar[r]^\phi \ar[d]_{\pi_1} & X_2 \ar[d]^{\pi_2} \\
X'_1 \ar[r]^{\phi'} & X'_2
}
\]
\begin{proof}
We show that (1) implies (2). 
Take $x'\in X'_2$ and $\ep>0$. 
Since $\phi':X'_1\to X'_2$ is surjective, 
there exists $z'\in X'_1$ such that $\phi'(z')=x'$. 
Let $K'\subset X'_1$ be a compact neighborhood of $z'$. 
As $\pi_1$ is proper, $\pi_1^{-1}(K')$ is compact, 
and so the restriction of $\phi$ to it is uniformly continuous. 
Therefore we can find $\delta>0$ so that 
\[
\d_1(x,y)<\delta\implies\d_2(\phi(x),\phi(y))<\ep
\]
holds for all $x,y\in\pi_1^{-1}(K')$. 
By the regularity of $\pi_1$, 
there exists an open neighborhood $U'$ of $z'$ 
such that for any $w'\in U'$ either 
\[
\dH(\pi_1^{-1}(z'),\pi_1^{-1}(w'))<\delta
\quad\text{or}\quad 
\diam\pi_1^{-1}(w')<\delta
\]
holds, where $\dH$ and $\diam$ stand for 
the Hausdorff metric and the diameter with respect to $\d_1$. 
We may assume that $U'$ is smaller than $K'$. 
Then, $\phi'(U')$ is an open neighborhood of $x'$. 
Since $\phi:\pi_1^{-1}(w')\to\pi_2^{-1}(\phi'(w'))$ is surjective 
for every $w'\in X'_1$, 
we can see that for any $y'\in\phi'(U')$ either 
\[
\dH(\pi_2^{-1}(x'),\pi_2^{-1}(y'))<\ep
\quad\text{or}\quad 
\diam\pi_2^{-1}(y')<\ep
\]
holds, where $\dH$ and $\diam$ stand for 
the Hausdorff metric and the diameter with respect to $\d_2$. 
It follows that $\pi_2$ is regular. 

Let us prove the other implication. 
Take $x'\in X'_1$ and $\ep>0$. 
Since $\phi'$ is a local homeomorphism, 
there exists an open neighborhood $V'$ of $x'$ 
such that $\phi':V'\to\phi'(V')$ is a homeomorphism. 
We may assume that $V'$ has compact closure. 
Then, $\phi:\pi_1^{-1}(V')\to\pi_2^{-1}(\phi'(V'))$ 
is a homeomorphism, 
because $\phi:\pi_1^{-1}(y')\to\pi_2^{-1}(\phi'(y'))$ is bijective 
for every $y'\in X'_1$. 
By the uniform continuity, 
we can find $\delta>0$ so that 
\[
\d_2(\phi(x),\phi(y))<\delta\implies\d_1(x,y)<\ep
\]
holds for all $x,y\in\pi_1^{-1}(V')$. 
By the regularity of $\pi_2$, 
there exists an open neighborhood $U'$ of $\phi'(x')$ 
such that for any $w'\in U'$ either 
\[
\dH(\pi_2^{-1}(\phi'(x')),\pi_2^{-1}(w'))<\delta
\quad\text{or}\quad 
\diam\pi_2^{-1}(w')<\delta
\]
holds, where $\dH$ and $\diam$ stand for 
the Hausdorff metric and the diameter with respect to $\d_2$. 
We may assume that $U'$ is smaller than $\phi'(V')$. 
Then, for any $y'\in\phi'^{-1}(U')$ either 
\[
\dH(\phi(\pi_1^{-1}(x')),\phi(\pi_1^{-1}(y')))<\delta
\quad\text{or}\quad 
\diam\phi(\pi_1^{-1}(y'))<\delta
\]
holds. 
Hence, for any $y'\in\phi'^{-1}(U')$ either 
\[
\dH(\pi_1^{-1}(x'),\pi_1^{-1}(y'))<\ep
\quad\text{or}\quad 
\diam\pi_1^{-1}(y')<\ep
\]
holds, where $\dH$ and $\diam$ stand for 
the Hausdorff metric and the diameter with respect to $\d_1$. 
Therefore we can conclude that $\pi_2$ is regular. 

Finally, let us prove that 
$\phi $ restricted to $(X_1)_{\pi_1}$ and $(X'_1)_{\pi_1}$ 
are local homeomorphisms 
under the assumption that $\pi_1$ (and hence $\pi_2$) is regular. 
For $i=1,2$, 
we write $Y_i:=(X_i)_{\pi_i}$ and $Y'_i:=(X'_i)_{\pi_i}$. 
Let $x'\in Y'_1$ and $V'\subset X'_1$ be as above. 
The open sets $\pi_1^{-1}(V')$ and $\pi_2^{-1}(\phi'(V'))$ 
have compact closures and 
$\phi$ gives a homeomorphism between them. 
Suppose that 
a sequence $(y'_k)_k$ in $V'\cap Y'_1$ converges to 
$y'\in V'\cap Y'_1$ in $Y'_1$. 
Then 
\begin{align*}
\lim_k\d'_{\pi_1}(y'_k,y')=0
&\iff\lim_k\dH(\pi_1^{-1}(y'_k),\pi_1^{-1}(y'))=0\\
&\implies\lim_k\dH(\phi(\pi_1^{-1}(y'_k)),\phi(\pi_1^{-1}(y')))=0\\
&\iff\lim_k\dH(\pi_2^{-1}(\phi'(y'_k)),\pi_2^{-1}(\phi'(y')))=0\\
&\iff\lim_k\d'_{\pi_2}(\phi'(y'_k),\phi'(y'))=0, 
\end{align*}
which means that 
$\phi':V'\cap Y'_1\to\phi'(V')\cap Y'_2$ is continuous. 
In the same way, we can see that its inverse is continuous. 
Hence $\phi'$ gives a homeomorphism 
between $V'\cap Y'_1$ and $\phi'(V')\cap Y'_2$. 
Thus, $\phi'$ gives a local homeomorphism from $Y'_1$ to $Y'_2$. 
For each $i=1,2$, 
the metric $\d_{\pi_i}$ on $Y_i$ is 
the sum of $\d_i$ and $\d'_{\pi_i}$, 
and so one can easily check that 
$\phi:Y_1\to Y_2$ is also a local homeomorphism. 
\end{proof}

We now introduce the notion of reduction maps. 
In the rest of this subsection, we keep the following setting. 
Let $(X,\d)$ and $(X',\d')$ be locally compact metric spaces 
and let $\pi:X\to X'$ be a continuous proper surjection. 
Suppose that $\pi$ is regular. 
Furthermore, we assume that 
both $X$ and $X'$ are totally disconnected. 
It follows from Lemma \ref{finertop} that 
both $X_\pi$ and $X'_\pi$ are totally disconnected, too. 
Let $A$ be a discrete abelian group. 

\begin{lemma}\label{welldefofPi}
For any $f\in C_c(X,A)$, 
there exists a compact open subset $L'\subset X'_\pi$ and 
a continuous function $g:X'_\pi\to A$ such that 
\[
f(x)=g(\pi(x))\quad\forall x\in X_\pi\setminus\pi^{-1}(L'). 
\]
\end{lemma}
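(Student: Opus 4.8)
The plan is to use that $A$ is discrete, so $f$ is locally constant, and to isolate the set of base points at which $f$ fails to be constant on the fibre. Let $K\subset X$ be the support of $f$, a compact set, with $f$ vanishing off $K$. Since $\pi$ is proper, $Z:=\pi^{-1}(\pi(K))$ is compact; covering $Z$ by open subsets of $X$ on which $f$ is constant and invoking the Lebesgue number lemma, I obtain $\delta>0$ such that $f$ is constant on every subset of $Z$ of diameter less than $\delta$. Put
\[
B':=\{x'\in X'\mid f\text{ is not constant on }\pi^{-1}(x')\}.
\]
I would then verify three things: $B'\subset\pi(K)$ (if $x'\notin\pi(K)$ then $\pi^{-1}(x')\subset X\setminus K$, so $f\equiv0$ there); $B'\subset X'_\pi$ (a one-point fibre carries a constant function); and if $x'\in\pi(K)\cap X'_\pi$ satisfies $\diam\pi^{-1}(x')<\delta$, then $\pi^{-1}(x')\subset Z$ together with the choice of $\delta$ forces $f$ to be constant on $\pi^{-1}(x')$. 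Consequently
\[
B'\subset\pi(K)\cap\{x'\in X'_\pi\mid\diam\pi^{-1}(x')\geq\delta\},
\]
and the right-hand side is compact in $X'_\pi$ by Lemma \ref{Kdeltaiscpt}.

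Since $X'_\pi$ is locally compact, Hausdorff and totally disconnected, its compact open subsets form a basis for the topology; covering the compact set above by finitely many compact open sets and taking their union gives a compact open $L'\subset X'_\pi$ with $B'\subset L'$. Now define $g\colon X'_\pi\to A$ by setting $g\equiv0$ on $L'$ and, for $x'\in W:=X'_\pi\setminus L'$, letting $g(x')$ be the common value of $f$ on $\pi^{-1}(x')$, which is well defined since $x'\notin B'$. For $x\in X_\pi\setminus\pi^{-1}(L')=\pi^{-1}(W)$ one has $\pi(x)\in W$ and $x$ lies in the fibre $\pi^{-1}(\pi(x))$, on which $f$ is constant equal to $g(\pi(x))$; hence $f(x)=g(\pi(x))$, which is the desired identity.

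It remains to check that $g$ is continuous, which I expect to be the only delicate point. As $L'$ is compact open, hence clopen, in $X'_\pi$, it suffices to prove continuity of $g$ on the clopen set $W$, and since $A$ is discrete it is enough to argue sequentially: were $g$ discontinuous at some $x'_0\in W$, metrizability of $X'_\pi$ would give a sequence $x'_k\to x'_0$ in $W$ with $g(x'_k)\neq g(x'_0)$ for all $k$. Choose $y_k\in\pi^{-1}(x'_k)$. The set $\{x'_0\}\cup\{x'_k\mid k\in\N\}$ is compact in $X'_\pi$, so by properness of $\pi\colon X_\pi\to X'_\pi$ (Proposition \ref{consequofregular}) its preimage is compact in $X_\pi$, and hence $(y_k)_k$ has a subsequence converging in $X_\pi$ to some $y$ with $\pi(y)=x'_0$. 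Since the inclusion $X_\pi\hookrightarrow X$ is continuous (Lemma \ref{finertop}) and $f$ is locally constant on $X$, one gets $f(y_{k_j})=f(y)$ for all large $j$; but $f(y_{k_j})=g(x'_{k_j})$ and $f(y)=g(x'_0)$, so $g(x'_{k_j})=g(x'_0)$ eventually, contradicting the choice of the sequence. (Equivalently, one may observe that $\pi$ restricts to a proper, hence closed, map $\pi^{-1}(W)\to W$ between locally compact Hausdorff spaces, so that $g^{-1}(a)\cap W=W\setminus\pi\bigl(\pi^{-1}(W)\setminus f^{-1}(a)\bigr)$ is open for every $a\in A$.) The main obstacle is precisely this last continuity step; everything else is a bookkeeping of compactness and properness together with the compact-open structure of $X'_\pi$.
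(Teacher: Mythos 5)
Your proposal is correct and follows essentially the same route as the paper: use discreteness of $A$ to get a Lebesgue-type $\delta$, apply Lemma \ref{Kdeltaiscpt} to $\pi(K)\cap\{x'\in X'_\pi\mid\diam\pi^{-1}(x')\geq\delta\}$, enclose it in a compact open $L'$ (total disconnectedness of $X'_\pi$), and define $g$ as the common fibre value off $L'$. The only difference is that you spell out the continuity of $g$ (which the paper leaves implicit), and your argument for it is fine.
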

\begin{proof}
The support $K:=\{x\in X\mid f(x)\neq0\}$ of $f$ is 
compact and open in $X$. 
There exists $\delta>0$ such that 
\[
\d(x,y)<\delta\implies f(x)=f(y)
\]
holds for any $x,y\in X$. 
By Lemma \ref{Kdeltaiscpt}, 
\[
K':=\pi(K)\cap\{x'\in X'_\pi\mid\diam\pi^{-1}(x')\geq\delta\}
\]
is compact in $X'_\pi$. 
Let $L'\subset X'_\pi$ be a compact open set containing $K'$. 
If $x,y\in X_\pi\setminus\pi^{-1}(L')$ satisfy $\pi(x)=\pi(y)$, 
then by the definition of $L'$ we get $\d(x,y)<\delta$. 
Thus $f(x)=f(y)$. 
Hence there exists a continuous function $g:X'_\pi\to A$ such that 
\[
f(x)=g(\pi(x))\quad\forall x\in X_\pi\setminus\pi^{-1}(L'). 
\]
\end{proof}

For $f\in C_c(X,A)$ and a compact open subset $K'\subset X'_\pi$, 
we define $f_{K'}\in C_c(X_\pi,A)$ by 
\[
f_{K'}(x)
:=\begin{cases}f(x)&x\in\pi^{-1}(K')\\0&\text{otherwise.}\end{cases}
\]

\begin{corollary}
When the compact open subset $K'$ is large enough, 
the equivalence class of $f_{K'}$ in $C_c(X_\pi,A)/C_c(X'_\pi,A)$ is 
uniquely determined. 
\end{corollary}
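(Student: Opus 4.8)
The plan is to deduce this directly from Lemma \ref{welldefofPi}. Fix $f\in C_c(X,A)$ and, by that lemma, choose a compact open subset $L'\subset X'_\pi$ together with a continuous function $g\colon X'_\pi\to A$ satisfying $f(x)=g(\pi(x))$ for all $x\in X_\pi\setminus\pi^{-1}(L')$. I claim that this $L'$ is a witness: for every compact open $K'\subset X'_\pi$ with $L'\subset K'$, the class of $f_{K'}$ in $C_c(X_\pi,A)/C_c(X'_\pi,A)$ coincides with the class of $f_{L'}$. (Here the quotient is understood to be by the image of the pullback $\pi^*\colon C_c(X'_\pi,A)\to C_c(X_\pi,A)$, which is well defined because $\pi\colon X_\pi\to X'_\pi$ is proper by Proposition \ref{consequofregular}.)

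First I would check that $f_{K'}$ really is an element of $C_c(X_\pi,A)$. By Proposition \ref{consequofregular} the map $\pi\colon X_\pi\to X'_\pi$ is continuous and proper, so $\pi^{-1}(K')$ is a compact open subset of $X_\pi$; since the inclusion $X_\pi\hookrightarrow X$ is continuous (Lemma \ref{finertop}), the restriction of $f$ to $X_\pi$ is continuous, whence $f_{K'}=(f|_{X_\pi})\cdot 1_{\pi^{-1}(K')}$ is continuous and supported in the compact set $\pi^{-1}(K')$.

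Next I would compute $f_{K'}-f_{L'}$. On $\pi^{-1}(L')$ both functions equal $f$, and on $X_\pi\setminus\pi^{-1}(K')$ both vanish; on the remaining set $\pi^{-1}(K')\setminus\pi^{-1}(L')=\pi^{-1}(K'\setminus L')$ one has $f_{K'}=f$ and $f_{L'}=0$, and since those points lie outside $\pi^{-1}(L')$ the defining property of $g$ gives $f(x)=g(\pi(x))$ there. Hence $f_{K'}-f_{L'}=\pi^*(g\cdot 1_{K'\setminus L'})$. Now $K'\setminus L'$ is compact, being closed in the compact set $K'$, and open, being the intersection of the open sets $K'$ and $X'_\pi\setminus L'$; therefore $g\cdot 1_{K'\setminus L'}\in C_c(X'_\pi,A)$ and $f_{K'}-f_{L'}$ lies in the image of $\pi^*$. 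Thus $[f_{K'}]=[f_{L'}]$, and since any two compact open sets containing $L'$ give the same class, the class of $f_{K'}$ is independent of $K'$ once $K'\supseteq L'$.

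I do not anticipate any real obstacle: all the substance is contained in Lemma \ref{welldefofPi}, which itself rests on the regularity of $\pi$ through Lemma \ref{Kdeltaiscpt}. The only points demanding a modicum of care are verifying that $f_{K'}$ is continuous with compact support for the finer topology on $X_\pi$, and the elementary observation that a difference of compact open sets is again compact open — both routine in the locally compact, totally disconnected setting.
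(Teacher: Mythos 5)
Your proposal is correct and follows essentially the same route as the paper: both invoke the $L'$ and $g$ supplied by Lemma \ref{welldefofPi} and observe that for any compact open $K'$ containing $L'$ the difference of cutoffs lies in $C_c(X'_\pi,A)$ (viewed inside $C_c(X_\pi,A)$ via $\pi$), your version merely making the identity $f_{K'}-f_{L'}=\pi^*\bigl(g\cdot 1_{K'\setminus L'}\bigr)$ and the clopenness/continuity checks explicit.
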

\begin{proof}
Let $L'\subset X'_\pi$ be the compact open subset as in the lemma above. 
Then, for any compact open sets $K'_1,K'_2$ containing $L'$, 
one has $f_{K'_1}-f_{K'_2}\in C_c(X'_\pi,A)$, 
where $C_c(X'_\pi,A)$ is regarded as a subgroup of $C_c(X_\pi,A)$ 
via the map $\pi:X_\pi\to X'_\pi$. 
Thus, 
the equivalence class of $f_{K'}$ in $C_c(X_\pi,A)/C_c(X'_\pi,A)$ is 
uniquely determined by letting $K'$ large enough. 
\end{proof}

\begin{definition}\label{defofreduction}
We define the reduction map 
\[
\Pi:C_c(X,A)\to C_c(X_\pi,A)/C_c(X'_\pi,A)
\]
by $\Pi(f):=f_{K'}+C_c(X'_\pi,A)$, 
where $f_{K'}$ is a function described above. 
\end{definition}

\begin{proposition}\label{reductionisiso}
The reduction map $\Pi$ is a surjective homomorphism, 
and its kernel equals $C_c(X',A)$. 
\end{proposition}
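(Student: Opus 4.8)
The plan is to verify three things in sequence: that $\Pi$ is well defined and a homomorphism, that $\ker\Pi=C_c(X',A)$ (viewed as a subgroup of $C_c(X,A)$ via $\pi^*$), and that $\Pi$ is surjective. The first is already essentially handled by the preceding corollary and Lemma \ref{welldefofPi}: for $f\in C_c(X,A)$ the class of $f_{K'}$ in $C_c(X_\pi,A)/C_c(X'_\pi,A)$ is independent of the large compact open set $K'$, so $\Pi(f)$ is unambiguous. Additivity follows because, given $f_1,f_2$, one may choose a single compact open $K'$ large enough to serve for $f_1$, $f_2$ and $f_1+f_2$ simultaneously (take the union of the three sets produced by the lemma), and then $(f_1+f_2)_{K'}=(f_1)_{K'}+(f_2)_{K'}$ on the nose.

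For the kernel, first note that $C_c(X',A)$ embeds in $C_c(X,A)$ by $h\mapsto h\circ\pi$, and I want to show $\Pi(h\circ\pi)=0$ for such $h$ and conversely. If $f=h\circ\pi$ with $h\in C_c(X',A)$, then $f$ is constant on each fiber of $\pi$, so in Lemma \ref{welldefofPi} one can take $L'=\emptyset$; hence $f_{K'}$ for large $K'$ already lies in $C_c(X'_\pi,A)$ (it is the pullback under $\pi:X_\pi\to X'_\pi$ of $h|_{K'}$, which is compactly supported on $X'_\pi$ because $K'$ is compact open in $X'_\pi$), so $\Pi(f)=0$. Conversely, suppose $\Pi(f)=0$, i.e.\ for large compact open $K'\subset X'_\pi$ we have $f_{K'}\in C_c(X'_\pi,A)$, meaning $f_{K'}=g\circ\pi$ on $X_\pi$ for some $g\in C_c(X'_\pi,A)$. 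Combining with Lemma \ref{welldefofPi}, which gives a continuous $g_0:X'_\pi\to A$ and a compact open $L'$ with $f=g_0\circ\pi$ on $X_\pi\setminus\pi^{-1}(L')$, one sees that $f$ agrees with the pullback of a single continuous $X'_\pi\to A$ function on all of $X_\pi$; so $f$ is constant on every fiber of $\pi$ (fibers with $\#\pi^{-1}(x')=1$ are trivially covered, the multi-point fibers lie in $X_\pi$). Therefore $f$ factors as $h\circ\pi$ for a function $h:X'\to A$, and $h$ is continuous because $\pi$ is a continuous proper \emph{surjection} (so a quotient map) and $h$ has compact support since $\pi(\operatorname{supp}f)$ is compact. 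Hence $f\in C_c(X',A)$.

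For surjectivity, take an arbitrary class $\xi\in C_c(X_\pi,A)/C_c(X'_\pi,A)$ represented by some $u\in C_c(X_\pi,A)$. Its support $S\subset X_\pi$ is compact; since $\pi:X_\pi\to X'_\pi$ is proper (Proposition \ref{consequofregular}), and since the inclusion $X_\pi\hookrightarrow X$ is continuous with $X$ totally disconnected, $S$ is a compact subset of the totally disconnected space $X$, hence I can enclose it in a compact open subset $W$ of $X$ on which, after shrinking, the value of the function I build is locally constant; more directly, I extend $u$ to a function $f$ on $X$ by $f:=u$ on $X_\pi$ (using the compact open set containing $S$ to see this is continuous and compactly supported on $X$) and $f:=0$ off that set. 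Then $f\in C_c(X,A)$, and for $K'$ large enough $f_{K'}=u$ up to an element of $C_c(X'_\pi,A)$, so $\Pi(f)=\xi$. The point requiring care is the passage from the compact set $S\subset X_\pi$ to an honest element of $C_c(X,A)$ restricting to $u$ near $S$: this uses that $X$ is locally compact totally disconnected, so $S$ has a compact open neighborhood in $X$, together with the fact that the $X_\pi$-topology is finer than the $X$-topology (Lemma \ref{finertop}) only in the "extra" directions recorded by $\d'_\pi$, which are already controlled on the compact set $S$.

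The main obstacle I anticipate is the surjectivity step, specifically making precise that a function compactly supported for the finer topology $\d_\pi=\d+\d'_\pi$ on $X_\pi$ can be realized as the restriction of a genuinely $C_c(X,A)$ function: one must check that the "gluing" of $u$ (on $X_\pi$) with $0$ (elsewhere) is continuous for the coarser topology on $X$, which needs an argument that near the boundary of the support — where $u$ is already $0$ — no discontinuity is introduced when points of $X\setminus X_\pi$ or points of $X_\pi$ with small fiber-diameter approach the support. This is where the regularity hypothesis (via Lemma \ref{welldefofPi}/\ref{Kdeltaiscpt}) does the real work, exactly as in the well-definedness argument.
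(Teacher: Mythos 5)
Your well-definedness, homomorphism, and kernel arguments are fine and essentially coincide with the paper's (the paper is terser on the kernel, but the content is the same: $\Pi(f)=0$ together with Lemma \ref{welldefofPi} forces $f$ to be constant on fibers, and properness of the surjection $\pi$ makes the factored function continuous with compact support). The gap is in surjectivity. You propose to extend a given $u\in C_c(X_\pi,A)$ to $f\in C_c(X,A)$ by setting $f:=u$ on $X_\pi$ and $f:=0$ elsewhere, claiming continuity on $X$ from a compact open neighborhood of the support. This step is false in general: the topology on $X_\pi$ is strictly finer than the subspace topology inherited from $X$, so a compact open subset $C\subset X_\pi$ need not be open in $X$, and points of $X\setminus X_\pi$ (or points of $X_\pi$ lying in fibers of small diameter, which are far away in $\d'_\pi$ but close in $\d$) can accumulate on $C$. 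For instance, if $X$ is a Cantor set with a dense set of doubled points over $X'$, then $X_\pi$ is discrete, $u=1_{\{x\}}$ for a doubled point $x$ lies in $C_c(X_\pi,\Z)$, and its extension by zero is discontinuous at $x$ since $x$ is not isolated in $X$. Indeed, the whole reason $\Pi$ maps into the quotient $C_c(X_\pi,A)/C_c(X'_\pi,A)$ is that elements of $C_c(X_\pi,A)$ are generally \emph{not} restrictions of elements of $C_c(X,A)$; one can only hope to realize the class of $u$ modulo $C_c(X'_\pi,A)$, and the realizing function is not an extension of $u$. Your closing remark acknowledges exactly this obstacle, but saying that regularity "does the real work" is not a proof; the construction that does the work is missing.

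The paper's route, which you would need (or some equivalent), is: since $X_\pi$ is totally disconnected and $A$ is discrete, it suffices to hit the classes of the generators $a1_C$ with $C\subset X_\pi$ compact open. Using that $\pi:X_\pi\to X'_\pi$ is continuous, open and proper, both $\pi(C)$ and the saturation $\pi^{-1}(\pi(C))$ are compact open, so total disconnectedness of $X$ gives a compact open $U\subset X$ with $C\subset U$ and $U\cap\bigl(\pi^{-1}(\pi(C))\setminus C\bigr)=\emptyset$. One then applies Lemma \ref{welldefofPi} to $a1_U$, enlarges $L'$ to contain $\pi(C)$, chooses a compact open $V'\subset X'$ separating $L'\setminus\pi(C)$ from $\pi(C)$, and sets $W:=U\setminus\pi^{-1}(V')$; a pointwise check on the four regions ($X_\pi\setminus\pi^{-1}(L')$, $\pi^{-1}(L')\setminus\pi^{-1}(\pi(C))$, $C$, and $\pi^{-1}(\pi(C))\setminus C$) shows $(a1_W)_{L'}=a1_C$, i.e.\ $\Pi(a1_W)=[a1_C]$. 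The cutoff by $\pi^{-1}(V')$ is precisely what handles the region of small-diameter fibers that your extension-by-zero cannot control.
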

\begin{proof}
It is clear that $\Pi$ is a homomorphism. 
We show that $\Pi$ is surjective. 
Let $C\subset X_\pi$ be a compact open set and let $a\in A$. 
We define $a1_C\in C_c(X_\pi,A)$ by 
\[
a1_C(x):=\begin{cases}a&x\in C\\0&\text{otherwise.}\end{cases}
\]
It suffices to prove that 
the equivalence class of $a1_C$ is contained in the image of $\Pi$. 
Note that $\pi:X_\pi\to X'_\pi$ is continuous, open and proper 
by Proposition \ref{consequofregular}. 
It follows that $\pi(C)\subset X'_\pi$ is compact and open and 
that $\pi^{-1}(\pi(C))\subset X'_\pi$ is compact and open, too. 
Since $X$ is totally disconnected, 
there exists a compact open subset $U\subset X$ such that 
\[
C\subset U\quad\text{and}\quad 
U\cap(\pi^{-1}(\pi(C))\setminus C)=\emptyset. 
\]
By applying Lemma \ref{welldefofPi} to $a1_U\in C_c(X,A)$, 
we obtain a compact open set $L'\subset X'_\pi$ and 
a continuous function $g:X'_\pi\to A$ such that 
\[
a1_U(x)=g(\pi(x))\quad\forall x\in X_\pi\setminus\pi^{-1}(L'). 
\]
By replacing $L'$ with $L'\cup\pi(C)$, 
we may assume $\pi(C)\subset L'$. 
Since $X'$ is totally disconnected, 
there exists a compact open subset $V'\subset X'$ such that 
\[
\pi(C)\cap V'=\emptyset\quad\text{and}\quad 
L'\setminus\pi(C)\subset V'. 
\]
Set $W:=U\setminus\pi^{-1}(V')\subset X$. 
We would like to show that $a1_W\in C_c(X,A)$ meets the requirement. 
Define a continuous function $\tilde g:X'_\pi\to A$ by 
\[
\tilde g(x')
:=\begin{cases}0&x'\in V'\\ g(x')&\text{otherwise.}\end{cases}
\]
Let us verify 
\[
a1_W(x)=\tilde g(\pi(x))\quad\forall x\in X_\pi\setminus\pi^{-1}(L'). 
\tag{$*$}
\]
If $x$ is in $\pi^{-1}(V')$, 
then the both sides of $(*)$ are zero. 
If $x$ is not in $\pi^{-1}(V')$, then 
\[
a1_W(x)=a1_U(x)=g(\pi(x))=\tilde g(\pi(x)), 
\]
that is, $(*)$ holds true. 
In view of Definition \ref{defofreduction}, 
$\Pi(a1_W)$ is given by 
the function $(a1_W)_{L'}\in C_c(X_\pi,A)$. 
We prove $(a1_W)_{L'}=a1_C$. 
If $x$ is in $X_\pi\setminus\pi^{-1}(L')$, then 
both sides are clearly zero. 
If $x$ is in $\pi^{-1}(L')\setminus\pi^{-1}(\pi(C))$, then 
$\pi(x)\in L'\setminus\pi(C)\subset V'$, 
and so $x$ is not in $W$. 
Hence, again, both sides are zero. 
If $x\in C$, then $x\in U$ and $\pi(x)\in\pi(C)$. 
So, $x$ is in $W$, and $(a1_W)_{L'}(x)=a=a1_C(x)$. 
If $x$ is in $\pi^{-1}(\pi(C))\setminus C$, then 
$x$ is not in $U$, 
and hence $(a1_W)_{L'}(x)=0=a1_C(x)$. 
This completes the proof of the surjectivity of $\Pi$. 

Finally, we show that $\Ker\Pi$ equals $C_c(X',A)$. 
Let $f\in C_c(X',A)$. 
For any compact open subset $L'\subset X'_\pi$, 
it is easy to see that 
the function $f_{L'}$ belongs to $C_c(X'_\pi,A)$. 
Thus, $\Pi(f)=0$. 
Take $f\in\Ker\Pi$. 
Let $L'\subset X'_\pi$ and $g:X'_\pi\to A$ be 
as in Lemma \ref{welldefofPi}, 
i.e.\ $f(x)=g(\pi(x))$ holds for all $x\in X_\pi\setminus\pi^{-1}(L')$. 
On the other hand, $f\in\Ker\Pi$ implies $f_{L'}\in C_c(X'_\pi,A)$. 
Therefore, for any $x,y\in X_\pi$ with $\pi(x)=\pi(y)$, 
one has $f(x)=f(y)$. 
Consequently, $f$ belongs to $C_c(X',A)$. 
\end{proof}

Based on the proposition above, 
we may regard the reduction map $\Pi$ as an isomorphism 
from $C_c(X,A)/C_c(X',A)$ to $C_c(X_\pi,A)/C_c(X'_\pi,A)$. 

We conclude this section by showing that 
the reduction maps commute with local homeomorphisms. 

Let $\pi_i:(X_i,\d_i)\to(X'_i,\d'_i)$, 
$\phi:X_1\to X_2$ and $\phi':X'_1\to X'_2$ be 
as in Proposition \ref{regularVSlocalhomeo}. 
Assume that $\pi_1$ and $\pi_2$ are regular. 
Suppose further that 
the metric spaces $X_1,X_2,X'_1,X'_2$ are totally disconnected. 
By Proposition \ref{regularVSlocalhomeo}, 
both $\phi:(X_1)_{\pi_1}\to(X_2)_{\pi_2}$ and 
$\phi':(X'_1)_{\pi_1}\to(X'_2)_{\pi_2}$ are local homeomorphisms. 
Let $A$ be a discrete abelian group. 
From the homomorphisms 
\[
\phi_*:C_c(X_1,A)\to C_c(X_2,A)
\]
and 
\[
\phi'_*:C_c(X'_1,A)\to C_c(X'_2,A), 
\]
we obtain a homomorphism 
\[
\sigma:C_c(X_1,A)/C_c(X'_1,A)\to C_c(X_2,A)/C_c(X'_2,A). 
\]
In the same way, from the homomorphisms 
\[
\phi_*:C_c((X_1)_{\pi_1},A)\to C_c((X_2)_{\pi_2},A)
\]
and 
\[
\phi'_*:C_c((X'_1)_{\pi_1},A)\to C_c((X'_2)_{\pi_2},A), 
\]
we obtain a homomorphism 
\[
\tau:C_c((X_1)_{\pi_1},A)/C_c((X'_1)_{\pi_1},A)
\to C_c((X_2)_{\pi_2},A)/C_c((X'_2)_{\pi_2},A). 
\]

\begin{proposition}\label{reductioncommute}
In the setting above, we have 
\[
\tau\circ\Pi_1=\Pi_2\circ\sigma, 
\]
where the reduction maps $\Pi_i$ are regarded as homomorphisms 
from $C_c(X_i,A)/C_c(X'_i,A)$ 
to $C_c((X_i)_{\pi_i},A)/C_c((X'_i)_{\pi_i},A)$. 
\end{proposition}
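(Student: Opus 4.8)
The plan is to verify the identity on a representative: fix $f\in C_c(X_1,A)$ and show that $\tau(\Pi_1(f))$ and $\Pi_2(\sigma(f+C_c(X'_1,A)))$ coincide in $C_c((X_2)_{\pi_2},A)/C_c((X'_2)_{\pi_2},A)$. Unwinding the definitions, $\tau(\Pi_1(f))$ is the class of $\phi_*(f_{K'})$ for any compact open $K'\subseteq(X'_1)_{\pi_1}$ that is large enough for $f$ in the sense of Lemma \ref{welldefofPi}, while, since $\Ker\Pi_2=C_c(X'_2,A)$ by Proposition \ref{reductionisiso} and $\sigma$ is induced by $\phi_*$, the second term is $\Pi_2(\phi_*f)$, i.e.\ the class of $(\phi_*f)_{L'}$ for any compact open $L'\subseteq(X'_2)_{\pi_2}$ large enough for $\phi_*f$. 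Thus the proposition reduces to choosing $K'$ and $L'$ compatibly so that the difference $\phi_*(f_{K'})-(\phi_*f)_{L'}$ lies in the image of $C_c((X'_2)_{\pi_2},A)$ inside $C_c((X_2)_{\pi_2},A)$.

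Before the computation I would record two facts. First, by Proposition \ref{regularVSlocalhomeo}, $\phi'$ restricts to a local homeomorphism $(X'_1)_{\pi_1}\to(X'_2)_{\pi_2}$, so $\phi'(K')$ is again compact open; and by Proposition \ref{consequofregular}, $\pi_2\colon(X_2)_{\pi_2}\to(X'_2)_{\pi_2}$ is a continuous, open, proper surjection, hence a quotient map, so that (as $A$ is discrete) a function $h\in C_c((X_2)_{\pi_2},A)$ comes from $C_c((X'_2)_{\pi_2},A)$ exactly when $h$ is constant on every fibre of $\pi_2$. Second, for every $y\in(X_2)_{\pi_2}$ the map $\pi_1$ restricts to a bijection $\{x\in X_1\mid\phi(x)=y\}\xrightarrow{\ \sim\ }\phi'^{-1}(\pi_2(y))$, and this common set is contained in $(X_1)_{\pi_1}$; this is immediate from $\pi_2\circ\phi=\phi'\circ\pi_1$, surjectivity of $\phi'$, and the hypothesis that $\phi$ is bijective on the fibres of $\pi_1$.

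Now the construction. Using Lemma \ref{welldefofPi}, pick a compact open $K'\subseteq(X'_1)_{\pi_1}$ and a continuous $g\colon(X'_1)_{\pi_1}\to A$ with $f(x)=g(\pi_1(x))$ for all $x\in(X_1)_{\pi_1}\setminus\pi_1^{-1}(K')$; applying the lemma to $\phi_*f$ yields a compact open $L'_0\subseteq(X'_2)_{\pi_2}$ of the same type, and I would then set $L':=L'_0\cup\phi'(K')$, which is compact open and still large enough for $\phi_*f$. With these choices I would compute $D:=\phi_*(f_{K'})-(\phi_*f)_{L'}\in C_c((X_2)_{\pi_2},A)$ and show that $D$ is constant on the fibres of $\pi_2$, hence lies in $C_c((X'_2)_{\pi_2},A)$ by the first observation, finishing the proof. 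Fixing $y\in(X_2)_{\pi_2}$ and $x'_2:=\pi_2(y)$, the fibrewise bijection identifies $\phi_*(f_{K'})(y)$ with the sum of the values of $f$ at the $\phi$-preimages of those $x'_1\in\phi'^{-1}(x'_2)$ lying in $K'$, and $(\phi_*f)(y)$ with the analogous sum over all of $\phi'^{-1}(x'_2)$ (all sums finite, since $f$, $f_{K'}$, $\phi_*f$ and $\phi_*(f_{K'})$ have compact support and $\phi$ is a local homeomorphism). If $x'_2\notin L'$ then $x'_2\notin\phi'(K')$, so no element of $\phi'^{-1}(x'_2)$ lies in $K'$ and $(\phi_*f)_{L'}(y)=0$, whence $D(y)=0$. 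If $x'_2\in L'$, then $(\phi_*f)_{L'}(y)=(\phi_*f)(y)$, and on the part $\phi'^{-1}(x'_2)\setminus K'$ of the fibre $f$ agrees with $g\circ\pi_1$, so a short cancellation gives $D(y)=-\sum_{x'_1\in\phi'^{-1}(x'_2)\setminus K'}g(x'_1)$; in both cases $D(y)$ depends only on $x'_2$.

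I expect the one point requiring care to be precisely the choice of $L'$: taking it merely large enough for $\phi_*f$ does not suffice, and one must enlarge it by $\phi'(K')$, since otherwise on $\phi'(K')\setminus L'_0$ the value $D(y)$ can genuinely depend on $y$ rather than only on $\pi_2(y)$ (there $f$ need not be governed by $g$, and different points of $\pi_2^{-1}(x'_2)$ may hit different preimages). The remaining ingredients — the fibrewise bijection, the quotient-map characterisation of the image of $C_c((X'_2)_{\pi_2},A)$, and the finiteness bookkeeping — are routine given the results already established.
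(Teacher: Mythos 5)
Your argument is correct, and it reaches the identity by a mechanism different from the paper's. The paper also reduces to comparing $\phi_*(f_{K'})$ with $\phi_*(f)_{L'}$, but it resolves the comparison by replacing \emph{both} cut-off sets with a compatible pair: it first proves a compactness claim (for compact $C'\subset(X'_2)_{\pi_2}$ and compact $D'\subset X'_1$, the set $(\phi')^{-1}(C')\cap D'$ is compact in $(X'_1)_{\pi_1}$ -- needed since $\phi'$ restricted to $(X'_1)_{\pi_1}$ is a local homeomorphism but not proper), then chooses $C',D'$ with $S\subset\pi_1^{-1}(D')$, $L'\subset C'$, $K'\subset E':=(\phi')^{-1}(C')\cap D'$, and shows the representatives $\phi_*(f_{E'})$ and $\phi_*(f)_{C'}$ are \emph{literally equal}, using only $\pi_2\circ\phi=\phi'\circ\pi_1$ and no fibrewise analysis. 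You instead keep $K'$, enlarge $L'$ by $\phi'(K')$ (compact open because $\phi'$ is open and continuous on $(X'_1)_{\pi_1}$ by Proposition \ref{regularVSlocalhomeo}), and prove that the difference of representatives is constant on $\pi_2$-fibres, hence lies in $\pi_2^*C_c((X'_2)_{\pi_2},A)$; this uses the fibrewise bijection hypothesis of Proposition \ref{regularVSlocalhomeo} together with the auxiliary function $g$ of Lemma \ref{welldefofPi}, and the fact that $\pi_2:(X_2)_{\pi_2}\to(X'_2)_{\pi_2}$ is an open proper quotient map (Proposition \ref{consequofregular}). What each buys: the paper's choice of $E'$ yields exact equality of representatives and sidesteps all fibre bookkeeping, at the price of the compactness lemma for $(\phi')^{-1}(C')\cap D'$; your route avoids that lemma entirely but requires the descent characterisation of the subgroup and the cancellation computation $D(y)=-\sum_{x'_1\in\phi'^{-1}(\pi_2(y))\setminus K'}g(x'_1)$, whose finiteness and independence of $y$ you correctly justify via $g(x'_1)=f(x_{x'_1})$ off $K'$. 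Your observation that enlarging $L'$ by $\phi'(K')$ is genuinely necessary (otherwise $D$ need not be fibrewise constant on $\phi'(K')\setminus L'_0$) is the right point of care, and plays the same structural role as the paper's insistence that $K'$ sit inside $(\phi')^{-1}(C')$.
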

\begin{proof}
As in the proof of Proposition \ref{regularVSlocalhomeo}, 
we write $Y_i:=(X_i)_{\pi_i}$ and $Y'_i:=(X'_i)_{\pi_i}$. 

First, we claim the following: 
for any compact subset $C'\subset Y'_2$ and 
any compact subset $D'\subset X'_1$, 
the set $(\phi')^{-1}(C')\cap D'$ is compact in $Y'_1$. 
Let $(y'_k)_k$ be a sequence in $(\phi')^{-1}(C')\cap D'$. 
By passing to a subsequence, 
we may assume that 
$(y'_k)_k$ converges to $y'\in D'$ in $X'_1$ and 
that $(\phi'(y'_k))_k$ converges to $z'\in C'$ in $Y'_2$. 
From the continuity of $\phi'$ and Lemma \ref{finertop}, 
one has $\phi'(y')=z'$. 
In particular, $y'$ belongs to $Y'_1$. 
Since $\phi':X'_1\to X'_2$ is a local homeomorphism, 
we can find an open neighborhood $U'\subset X'_1$ of $y'$ 
such that $\phi':U'\to\phi'(U')$ is a homeomorphism. 
Let $V'\subset Y'_1$ be an open neighborhood of $y'$. 
By Lemma \ref{finertop}, we may assume $V'\subset U'$. 
Then $\phi'(V')\subset Y'_2$ is also an open neighborhood of $\phi'(y')$, 
because $\phi':Y'_1\to Y'_2$ is a local homeomorphism 
by Proposition \ref{regularVSlocalhomeo}. 
It follows that 
for sufficiently large $k$ we have $\phi'(y'_k)\in\phi'(V')$, 
and hence $y'_k\in V'$. 
This means that $(y'_k)_k$ converges to $y'$ in $Y'_1$, 
which implies that $(\phi')^{-1}(C')\cap D'$ is compact in $Y'_1$. 

Take $f\in C_c(X_1,A)$. 
The support $S:=\{x\in X_1\mid f(x)\neq0\}$ of $f$ is 
compact and open in $X_1$. 
The image of $\phi_*(f)\in C_c(X_2,A)$ by the reduction map 
is given by a function $\phi_*(f)_{L'}\in C_c(Y_2,A)$, 
where $L'$ is a suitable compact open subset of $Y'_2$ 
(see Lemma \ref{welldefofPi} and Definition \ref{defofreduction}). 
In other words, 
$\Pi_2\circ\sigma$ sends $f+C_c(X'_1,A)$ 
to $\phi_*(f)_{L'}+C_c(Y'_2,A)$. 
On the other hand, the image of $f$ by the reduction map is 
given by $f_{K'}\in C_c(Y_1,A)$, 
where $K'$ is a suitable compact open subset of $Y'_1$ 
(see Lemma \ref{welldefofPi} and Definition \ref{defofreduction}). 
In other words, 
$\tau\circ\Pi_1$ sends $f+C_c(X'_1,A)$ 
to $\phi_*(f_{K'})+C_c(Y'_2,A)$. 

Choose a compact open subset $C'\subset Y'_2$ and 
a compact open subset $D'\subset X'_1$ so that 
\[
S\subset\pi_1^{-1}(D'),\quad L'\subset C'
\quad\text{and}\quad 
K'\subset(\phi')^{-1}(C')\cap D'. 
\]
Clearly $E':=(\phi')^{-1}(C')\cap D'$ is clopen in $Y'_1$. 
Thanks to the claim we have proved above, 
$E'$ is compact in $Y'_1$. 
As $K'\subset E'$, one has 
\[
\phi_*(f_{K'})+C_c(Y'_2,A)=\phi_*(f_{E'})+C_c(Y'_2,A). 
\]
Similarly, by $L'\subset C'$, 
\[
\phi_*(f)_{L'}+C_c(Y'_2,A)=\phi_*(f)_{C'}+C_c(Y'_2,A). 
\]
Now, for every $y\in Y_2$ we have 
\[
\phi_*(f)_{C'}(y)
=\sum_{\substack{y\in\pi_2^{-1}(C') \\ y=\phi(x)}}f(x)
\]
and 
\[
\phi_*(f_{E'})(y)
=\sum_{x\in\phi^{-1}(y)}f_{E'}(x)
=\sum_{\substack{y=\phi(x) \\ x\in\pi_1^{-1}(E')}}f(x)
=\sum_{\substack{y=\phi(x) \\ x\in\phi^{-1}(\pi_2^{-1}(C'))}}f(x), 
\]
where the last equality follows from 
$E'=(\phi')^{-1}(C')\cap D'$ and $S\subset\pi_1^{-1}(D')$. 
Therefore we obtain $\phi_*(f)_{C'}=\phi_*(f_{E'})$, 
which completes the proof. 
\end{proof}

\subsection{Factor groupoid situation}

In this subsection, 
we apply the results obtained in the last subsection 
to \'etale groupoids arising from the factor groupoid situation. 
The precise setting is given as follows. 

\begin{setting}\label{factor}
Let $\G$ and $\G'$ be \'etale groupoids and 
let $\pi:\G\to\G'$ be a continuous homomorphism. 
We assume the following. 
\begin{itemize}
\item $\pi$ is surjective. 
\item $\pi$ is proper. 
\item For any $x\in\G^{(0)}$, $\pi$ induces a bijection 
between $r^{-1}(x)$ to $r^{-1}(\pi(x))$. 
\item Both $\G$ and $\G'$ are totally disconnected. 
\end{itemize}
\end{setting}

\begin{remark}
In \cite[Section 6]{Pu21Munster}, 
the groupoids $\G$ and $\G'$ are not assumed to be 
\'etale or totally disconnected. 
Thus, the results of \cite[Section 6]{Pu21Munster} can be applied to 
actions of continuous groups on connected spaces. 
In this article, we restrict our attention to 
\'etale groupoids on totally disconnected spaces. 
\end{remark}

When $(X_i,\d_i)$, $i=1,2,\dots,n$ are metric spaces, 
we use the $\ell^1$-metric 
for the product space $X_1\times X_2\times\dots\times X_n$. 
That is, the metric of the product space is given by 
\[
\d((x_i)_i,(y_i)_i):=\sum_{i=1}^n\d_i(x_i,y_i). 
\]
The following lemma says that 
it suffices to check the regularity of $\pi:\G^{(0)}\to\G'^{(0)}$ 
in order to deduce 
that of $\pi:\G\to\G'$ or $\pi^{(n)}:\G^{(n)}\to\G'^{(n)}$. 

\begin{lemma}\label{pi^nregular}
Let $\pi:\G\to\G'$ be as in Setting \ref{factor}. 
The following are equivalent. 
\begin{enumerate}
\item $\pi:\G^{(0)}\to\G'^{(0)}$ is regular. 
\item $\pi:\G\to\G'$ is regular. 
\item For some $n\in\N\setminus\{1\}$, 
$\pi^{(n)}:\G^{(n)}\to\G'^{(n)}$ is regular. 
\end{enumerate}
\end{lemma}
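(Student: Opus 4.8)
The plan is to prove the chain of equivalences by exploiting the product structure of $\G^{(n)}$ together with Proposition \ref{regularVSlocalhomeo}, which transports regularity across local homeomorphisms that are fibrewise bijective over the base. The key observation is that the face maps and degeneracy maps of the nerve are local homeomorphisms, and that $\pi^{(n)}$ is built out of $\pi$ in a way compatible with them. So the structure of the argument is: (2)$\iff$(3) directly from Proposition \ref{regularVSlocalhomeo}, and (1)$\iff$(2) by a separate but similar argument relating $\pi$ on $\G$ to $\pi$ on $\G^{(0)}$ via the range (or source) map.

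First I would establish (1)$\implies$(2). The range map $r:\G\to\G^{(0)}$ is a surjective local homeomorphism, and likewise $r':\G'\to\G'^{(0)}$, and the square
\[
\xymatrix@M=10pt{
\G \ar[r]^r \ar[d]_\pi & \G^{(0)} \ar[d]^\pi \\
\G' \ar[r]^{r'} & \G'^{(0)}
}
\]
commutes since $\pi$ is a homomorphism. The hypothesis in Setting \ref{factor} that $\pi$ restricts to a bijection $r^{-1}(x)\to r^{-1}(\pi(x))$ for every $x\in\G^{(0)}$ is exactly the fibrewise-bijectivity condition required in Proposition \ref{regularVSlocalhomeo} (with $\phi=r$, $\phi'=r'$, $\pi_1=\pi:\G\to\G'$, $\pi_2=\pi:\G^{(0)}\to\G'^{(0)}$): the fibre of $r$ over $x'\in\G'^{(0)}$ pulled back along $\pi$ is $\pi^{-1}(r'^{-1}(x'))=\pi^{-1}(s^{-1}$-type set$)$, and one checks $r$ maps $\pi^{-1}(x')$-fibres bijectively. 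Hence Proposition \ref{regularVSlocalhomeo} gives the equivalence of regularity of $\pi:\G\to\G'$ and $\pi:\G^{(0)}\to\G'^{(0)}$, yielding (1)$\iff$(2). I should double-check that $\pi:\G\to\G'$ and $\pi:\G^{(0)}\to\G'^{(0)}$ are both proper surjections onto locally compact metric spaces so that Proposition \ref{regularVSlocalhomeo} applies; surjectivity and properness of $\pi$ on $\G$ are in Setting \ref{factor}, and on $\G^{(0)}$ they follow since $r$ is a proper surjection onto $\G^{(0)}$ when $\G$ is \'etale.

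Next, for (2)$\iff$(3), I would similarly realize $\G^{(n)}$ as an iterated fibre product. The map $\G^{(n)}\to\G$ sending $(g_1,\dots,g_n)\mapsto g_1$ (equivalently, an appropriate projection) is a local homeomorphism onto an open subset, but to apply Proposition \ref{regularVSlocalhomeo} cleanly I would rather use that $\G^{(n)}$ is homeomorphic (via the map $(g_1,\dots,g_n)\mapsto(r(g_1),g_1,g_2,\dots,g_n)$ unpacked inductively) to an open subset of $\G^n$ cut out by the composability relations, and that the forgetful/face maps provide the needed local homeomorphisms with the fibrewise-bijection property inherited from the single-variable case. Concretely: the range map $\G^{(n)}\to\G^{(0)}$, $(g_1,\dots,g_n)\mapsto r(g_1)$, is a local homeomorphism, and $\pi^{(n)}$ covers $\pi:\G^{(0)}\to\G'^{(0)}$ through it; moreover for fixed $x\in\G^{(0)}$ the set of composable $n$-strings with $r(g_1)=x$ is in bijection, via $\pi^{(n)}$, with those over $\pi(x)$ — this is because $r^{-1}(x)\to r^{-1}(\pi(x))$ is bijective and composability is preserved, so one builds the bijection on fibres coordinate by coordinate. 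Then Proposition \ref{regularVSlocalhomeo} applied with $\pi_1=\pi^{(n)}$, $\pi_2=\pi:\G^{(0)}\to\G'^{(0)}$, $\phi=\phi'=$ the appropriate range maps gives regularity of $\pi^{(n)}$ equivalent to regularity of $\pi$ on $\G^{(0)}$, for every $n\geq 1$ (and trivially for $n=0$). Combined with (1)$\iff$(2) this closes the cycle: in particular regularity for some $n\geq 2$ is equivalent to (1), hence to (2) and to regularity for all $n$.

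The main obstacle I anticipate is the bookkeeping needed to verify the fibrewise-bijectivity hypothesis of Proposition \ref{regularVSlocalhomeo} for $\pi^{(n)}$ over $\G^{(0)}$ — i.e.\ that $\pi^{(n)}$ restricts to a bijection between the fibre of the range map $\G^{(n)}\to\G^{(0)}$ over $x$ and the corresponding fibre over $\pi(x)$. One has to argue that an $n$-string $(g_1,\dots,g_n)$ with $r(g_1)=x$ is uniquely reconstructed from $(\pi(g_1),\dots,\pi(g_n))$ and $x$: given $\pi(g_1)$ and $r(g_1)=x$, the bijection $r^{-1}(x)\cong r^{-1}(\pi(x))$ recovers $g_1$; then $s(g_1)$ is determined, and $r(g_2)=s(g_1)$, so $g_2$ is recovered from $\pi(g_2)$ by the bijection over $s(g_1)$; inductively all $g_i$ are recovered, and conversely any composable string downstairs lifts. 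This induction is straightforward but must be stated carefully. A secondary point is to confirm that all the range maps appearing are genuine local homeomorphisms between locally compact (metrizable, second countable) Hausdorff spaces so the hypotheses of Proposition \ref{regularVSlocalhomeo} are literally met; this is immediate from $\G$ being \'etale, but worth a sentence.
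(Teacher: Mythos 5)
Your proof is correct and follows essentially the same route as the paper: both equivalences are obtained by applying Proposition \ref{regularVSlocalhomeo}, with (1)$\iff$(2) coming from the range maps $r:\G\to\G^{(0)}$, $r:\G'\to\G'^{(0)}$ and the fibrewise bijectivity supplied (after a one-line check) by the hypothesis of Setting \ref{factor}, exactly as in the paper. The only cosmetic difference is in the $n\geq2$ case: the paper compares $\pi^{(n)}$ with $\pi:\G\to\G'$ via the first-coordinate projection $\G^{(n)}\to\G$, whereas you compare it with $\pi:\G^{(0)}\to\G'^{(0)}$ via the range map $(g_1,\dots,g_n)\mapsto r(g_1)$; your unique-lifting induction verifies precisely the needed fibrewise condition, so either choice closes the argument.
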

\begin{proof}
The range maps $r:\G\to\G^{(0)}$ and $r:\G'\to\G'^{(0)}$ are 
local homeomorphisms, 
and so the equivalence (1)$\iff$(2) follows 
from Proposition \ref{regularVSlocalhomeo}. 
The projection $(g_1,g_2,\dots,g_n)\mapsto g_1$ is 
a local homeomorphism from $\G^{(n)}$ to $\G$, 
The same is true for $\G'^{(n)}\to\G'$. 
Hence the equivalence (2)$\iff$(3) follows 
from Proposition \ref{regularVSlocalhomeo}, too. 
\end{proof}

When $\pi:\G\to\G'$ is as in Setting \ref{factor}, 
it is easy to see that 
$\G_\pi$ and $\G'_\pi$ are subgroupoids of $\G$ and $\G'$, 
respectively. 
Our next task is to show that they are \'etale groupoids 
under the topologies induced by the metrics $\d_\pi$ and $\d'_\pi$. 

\begin{lemma}
Let $\pi:\G\to\G'$ be as in Setting \ref{factor}. 
Suppose that $\pi$ is regular. 
Then 
\[
\mathcal{M}:=\{(g,h')
\mid g\in\G_\pi,\ h'\in\G'_\pi,\ s(\pi(g)){=}r(h')\}
\]
is a closed subset of $\G_\pi\times\G'_\pi$ 
and there exists a continuous map $m:\mathcal{M}\to\G_\pi$ such that 
$\pi(g^{-1}m(g,h'))=h'$ holds for any $(g,h')\in\mathcal{M}$. 
\end{lemma}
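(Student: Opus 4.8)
The plan is: first to check that $\mathcal{M}$ is closed; then to define $m$ pointwise using the fibrewise bijectivity assumed in Setting \ref{factor}; and finally to prove that $m$ is continuous by a sequential argument that splits the metric $\d_\pi=\d+\d'_\pi\circ(\pi\times\pi)$ into its two summands. Closedness is the quickest part: by Lemma \ref{finertop} the inclusions $\G_\pi\hookrightarrow\G$ and $\G'_\pi\hookrightarrow\G'$ are continuous, so the map $\Phi\colon\G_\pi\times\G'_\pi\to\G'^{(0)}\times\G'^{(0)}$, $\Phi(g,h'):=(s(\pi(g)),r(h'))$, is continuous; since $\G'^{(0)}$ is Hausdorff, the diagonal is closed, and $\mathcal{M}=\Phi^{-1}(\text{diagonal})$ is closed in $\G_\pi\times\G'_\pi$.

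To define $m$, take $(g,h')\in\mathcal{M}$. The product $\pi(g)h'$ is defined in $\G'$ and has range $r(\pi(g))=\pi(r(g))$, so, since $\pi$ restricts to a bijection $r^{-1}(r(g))\to r^{-1}(\pi(r(g)))$, there is a unique element $m(g,h')\in r^{-1}(r(g))$ with $\pi(m(g,h'))=\pi(g)h'$. Then $\pi\bigl(g^{-1}m(g,h')\bigr)=\pi(g)^{-1}\pi(g)h'=h'$, as required. Since $g\in\G_\pi$ we have $\pi(g)\in\G'_\pi$, and $h'\in\G'_\pi$, and $\G'_\pi$ is a subgroupoid, so $\pi(m(g,h'))=\pi(g)h'\in\G'_\pi$ and hence $m(g,h')\in\G_\pi$. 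Thus $m$ is a well-defined map $\mathcal{M}\to\G_\pi$, and only its continuity remains.

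For continuity I would use sequences, $\mathcal{M}$ being metrizable. Let $(g_k,h'_k)\to(g,h')$ in $\mathcal{M}$ and set $b_k:=m(g_k,h'_k)$, $b:=m(g,h')$, $w_k:=\pi(b_k)=\pi(g_k)h'_k$, $w:=\pi(b)=\pi(g)h'$. Lemma \ref{finertop} and continuity of $\pi$, of $s$, and of the multiplication of $\G'$ give $w_k\to w$ in $\G'$; as $\pi$ is proper, $\pi^{-1}\bigl(\{w_j\}_j\cup\{w\}\bigr)$ is compact and contains every $b_k$, and any subsequential limit $\tilde b$ of $(b_k)$ satisfies $\pi(\tilde b)=w$ and $r(\tilde b)=r(g)$ (the latter since $r(b_k)=r(g_k)\to r(g)$), hence $\tilde b=b$ by the uniqueness in the definition of $m$; so $b_k\to b$ in $\G$. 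Because $\d_\pi(b_k,b)=\d(b_k,b)+\dH\bigl(\pi^{-1}(w_k),\pi^{-1}(w)\bigr)$, it then suffices to prove $\dH\bigl(\pi^{-1}(w_k),\pi^{-1}(w)\bigr)\to0$. One of the two defining suprema, namely $\sup_{z\in\pi^{-1}(w_k)}\inf_{b'\in\pi^{-1}(w)}\d(z,b')$, tends to $0$ by properness alone, because any subsequential limit of a choice $z_k\in\pi^{-1}(w_k)$ lies in $\pi^{-1}(w)$.

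The other supremum, $\sup_{b'\in\pi^{-1}(w)}\inf_{z\in\pi^{-1}(w_k)}\d(z,b')$, is the crux, and the only place where the hypotheses get used in an essential way. For a fixed $b^*\in\pi^{-1}(w)$ I would factor $b^*=a\beta$, with $a$ the unique preimage of $\pi(g)$ lying in $r^{-1}(r(b^*))$ and $\beta:=a^{-1}b^*$, so that $a\in\pi^{-1}(\pi(g))$, $\beta\in\pi^{-1}(h')$ and $s(a)=r(\beta)$. Since $g_k\to g$ in $\G_\pi$ forces $\dH\bigl(\pi^{-1}(\pi(g_k)),\pi^{-1}(\pi(g))\bigr)\to0$, choose $a_k\in\pi^{-1}(\pi(g_k))$ with $\d(a_k,a)\to0$, and let $\beta_k$ be the unique element of $\pi^{-1}(h'_k)$ with $r(\beta_k)=s(a_k)$ (this exists because $\pi(s(a_k))=s(\pi(g_k))=r(h'_k)$). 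A properness-and-bijectivity argument identical in shape to the one for $(b_k)$ shows $\beta_k\to\beta$ in $\G$ (any subsequential limit $\beta''$ lies in the compact set $\pi^{-1}(\{h'_j\}_j\cup\{h'\})$ and has $\pi(\beta'')=h'$, $r(\beta'')=s(a)$, hence $\beta''=\beta$); therefore $a_k\beta_k\to a\beta=b^*$ with $a_k\beta_k\in\pi^{-1}(w_k)$, and a short compactness argument using compactness of $\pi^{-1}(w)$ upgrades this to uniformity in $b^*$. The essential inputs are precisely the fibrewise bijectivity of Setting \ref{factor} — which lets one split an element of $\pi^{-1}(w)$ and reassemble it over $w_k$ — together with the regularity of $\pi$, which is exactly what turns $g_k\to g$ in $\G_\pi$ into Hausdorff convergence of the fibres $\pi^{-1}(\pi(g_k))$ and so supplies the approximants $a_k$; everything else is routine bookkeeping with $\d_\pi$, $\d'_\pi$ and properness.
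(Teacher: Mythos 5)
Your argument is correct, and on the continuity of $m$ it takes a genuinely different route from the paper. The paper proves continuity topologically: it picks a compact open $\G'$-bisection $U$ around $\pi(m(g,h'))$ and a neighborhood $V\subset\pi^{-1}(U)$ of $m(g,h')$ in $\G_\pi$, then uses the openness of $\pi:\G_\pi\to\G'_\pi$ (Proposition \ref{consequofregular}) and the fact that $r:\G_\pi\to\G_\pi^{(0)}$ is a local homeomorphism (Proposition \ref{regularVSlocalhomeo}) to produce $a_k\in V$ with $r(a_k)=r(g_k)$, and concludes $a_k=g_km(g_k,h'_k)g_k^{-1}\cdots$, more precisely $a_k=g_kh_k$, by injectivity of $r$ and of the bisection $U$ together with the fibrewise bijectivity of Setting \ref{factor}; so regularity enters through those two propositions, and total disconnectedness through the existence of the compact open bisection. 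You instead work directly with the metric, splitting $\d_\pi(b_k,b)$ into $\d(b_k,b)$ and $\dH(\pi^{-1}(w_k),\pi^{-1}(w))$: the first summand and one half of the Hausdorff distance follow from properness plus the uniqueness in the fibrewise bijection, and the other half from the decomposition $b^*=a\beta$ with reassembly over $w_k$, using only properness, fibrewise bijectivity, continuity of the groupoid operations, and the definition of $\d_\pi$ (note that converting $g_k\to g$ in $\G_\pi$ into $\dH(\pi^{-1}(\pi(g_k)),\pi^{-1}(\pi(g)))\to0$ is literally the definition of $\d_\pi$, not an application of regularity, so your proof in fact bypasses the regularity machinery and the bisection argument altogether). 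What the paper's route buys is brevity, since Propositions \ref{consequofregular} and \ref{regularVSlocalhomeo} are already available; what yours buys is a more elementary, self-contained argument that does not use total disconnectedness. The two steps you leave as sketches are fine but worth a line each: the ``uniformity in $b^*$'' upgrade follows because the functions $b^*\mapsto\inf_{z\in\pi^{-1}(w_k)}\d(z,b^*)$ are $1$-Lipschitz and tend to $0$ pointwise on the compact set $\pi^{-1}(w)$, hence uniformly; and the subsequential-limit arguments should note explicitly that the relevant sequences live in the compact sets $\pi^{-1}(\{w_j\}_j\cup\{w\})$ and $\pi^{-1}(\{h'_j\}_j\cup\{h'\})$ supplied by properness.
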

\begin{proof}
The set 
\[
\mathcal{M}_0:=\{(g,h')\mid g\in\G,\ h'\in\G',\ s(\pi(g)){=}r(h')\}
\]
is clearly closed in $\G\times\G'$ and 
$\mathcal{M}$ is the intersection of 
$\mathcal{M}_0$ and $\G_\pi\times\G'_\pi$. 
Hence $\mathcal{M}$ is closed in $\G_\pi\times\G'_\pi$. 
Take $(g,h')\in\mathcal{M}$. 
By assumption, there exists a unique element $h\in\G_\pi$ 
such that $\pi(h)=h'$ and $s(g)=r(h)$. 
Define $m(g,h')=gh$. 
Clearly we have $\pi(g^{-1}m(g,h'))=h'$. 
Let us show that $m$ is continuous. 
Suppose that 
a sequence $((g_k,h'_k))_k$ converges to $(g,h')$ in $\mathcal{M}$. 
Let $h_k:=g_k^{-1}m(g_k,h'_k)$. 
We can find a compact open $\G'$-bisection $U\subset\G'$ 
containing $\pi(m(g,h'))$. 
Let $V\subset\G_\pi$ be an open neighborhood of $m(g,h')$ in $\G_\pi$. 
By Lemma \ref{finertop}, we may assume $V\subset\pi^{-1}(U)$. 
By Proposition \ref{consequofregular}, $\pi:\G_\pi\to\G'_\pi$ is open, 
and so $\pi(V)$ is an open neighborhood of $\pi(m(g,h'))=\pi(g)h'$ 
in $\G'_\pi$. 
By Proposition \ref{regularVSlocalhomeo}, 
the range map $r:\G_\pi\to\G_\pi^{(0)}$ is a local homeomorphism. 
Therefore, $r(V)$ is an open neighborhood of $r(m(g,h'))=r(g)$. 
Since $(g_k)_k$ converges to $g$ in $\G_\pi$, 
for sufficiently large $k$, we get $r(g_k)\in r(V)$. 
Thus, there exists $a_k\in V$ such that $r(g_k)=r(a_k)$. 
As the multiplication on $\G'$ is continuous, 
for sufficiently large $k$, one has $\pi(g_kh_k)=\pi(g_k)h'_k\in U$. 
It follows from $\pi(a_k)\in\pi(V)\subset U$ that 
$\pi(a_k)$ equals $\pi(g_kh_k)$, 
because $U$ is a $\G'$-bisection. 
This, together with $r(g_kh_k)=r(g_k)=r(a_k)$, implies $a_k=g_kh_k$. 
Hence, for sufficiently large $k$, $g_kh_k$ is in $V$, 
which means that $m(g_k,h'_k)=g_kh_k$ converges to $m(g,h')$. 
\end{proof}

\begin{proposition}\label{Gpiisetale}
Let $\pi:\G\to\G'$ be as in Setting \ref{factor}. 
Suppose that $\pi$ is regular. 
Then $\G_\pi$ and $\G'_\pi$ are \'etale groupoids. 
\end{proposition}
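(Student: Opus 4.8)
The plan is to check, one property at a time, that $\G_\pi$ and $\G'_\pi$ are \'etale groupoids: that they are subgroupoids of $\G$ and $\G'$, that with the metrics $\d_\pi$ and $\d'_\pi$ they are second countable locally compact Hausdorff spaces, that their range maps are local homeomorphisms, and that multiplication and inversion are continuous. The purely algebraic point, already noted before the proposition, rests on the fiberwise bijectivity in Setting \ref{factor}: the maps $g\mapsto r(g)$ and $g\mapsto s(g)$ restrict to bijections of $\pi^{-1}(g')$ onto $\pi^{-1}(r(g'))$ and $\pi^{-1}(s(g'))$, so $\#\pi^{-1}(g')=\#\pi^{-1}(r(g'))=\#\pi^{-1}(s(g'))$ for every $g'\in\G'$; hence $g'\in\G'_\pi$ iff $r(g')\in\G'_\pi$ iff $s(g')\in\G'_\pi$, and $\G'_\pi$, and therefore $\G_\pi=\pi^{-1}(\G'_\pi)$, is closed under the groupoid operations. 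The same remark gives $\pi^{-1}(\G'^{(0)})=\G^{(0)}$, so the unit space $\G_\pi^{(0)}=\G_\pi\cap\G^{(0)}$ is exactly $(\G^{(0)})_{\pi|_{\G^{(0)}}}$ with its metric, and likewise for $\G'$.

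For the topology I would quote Section 4.1. Since $\pi\colon\G\to\G'$ is a regular continuous proper surjection, Proposition \ref{consequofregular} makes $(\G_\pi,\d_\pi)$ and $(\G'_\pi,\d'_\pi)$ locally compact metric spaces and $\pi\colon\G_\pi\to\G'_\pi$ a continuous, proper, open surjection. For second countability, cover $\G'$ by countably many compact sets $K_n$ (possible since $\G'$ is locally compact and second countable); by Lemma \ref{Kdeltaiscpt} each $K_n\cap\{g'\in\G'_\pi\mid\diam\pi^{-1}(g')\geq1/m\}$ is compact in $\G'_\pi$, and these sets exhaust $\G'_\pi$, so $\G'_\pi$ is $\sigma$-compact, hence separable, hence second countable; as $\pi$ is proper, $\G_\pi=\pi^{-1}(\G'_\pi)$ is then second countable too. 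That $r\colon\G_\pi\to\G_\pi^{(0)}$ and $r\colon\G'_\pi\to(\G'_\pi)^{(0)}$ are local homeomorphisms is precisely the conclusion of Proposition \ref{regularVSlocalhomeo} applied with $\pi_1=\pi\colon\G\to\G'$, $\pi_2=\pi|_{\G^{(0)}}$ and $\phi=\phi'=r$: the compatibility $\pi_2\circ r=r\circ\pi$ is the homomorphism property, the bijectivity of $r\colon\pi^{-1}(x')\to\pi_2^{-1}(r(x'))$ is Setting \ref{factor}, and $\pi_2$ is regular by Lemma \ref{pi^nregular} (this application is already invoked in the proof of the lemma immediately preceding the proposition). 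Applying Proposition \ref{regularVSlocalhomeo} once more with $\pi_1=\pi_2=\pi$ and $\phi=\phi'$ the inversion map of $\G$ (a homeomorphism, hence a local homeomorphism) shows that inversion restricts to a local homeomorphism, in particular a continuous map, on each of $\G_\pi$ and $\G'_\pi$.

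It remains to treat continuity of multiplication, and this is where the lemma preceding the proposition is used. For $\G_\pi$, I claim its product equals the composite $(g,h)\mapsto(g,\pi(h))\mapsto m(g,\pi(h))$: the first map is continuous (since $\pi\colon\G_\pi\to\G'_\pi$ is) and sends $\G_\pi^{(2)}$ into $\mathcal M$, and if $(g,h)\in\G_\pi^{(2)}$ and $w:=m(g,\pi(h))\in\G_\pi$, then $g^{-1}w\in\G_\pi$ satisfies $\pi(g^{-1}w)=\pi(h)$ and $r(g^{-1}w)=s(g)=r(h)$, so by injectivity of $\pi$ on $r^{-1}(s(g))$ (Setting \ref{factor}) one gets $g^{-1}w=h$, i.e.\ $w=gh$. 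Thus multiplication on $\G_\pi$ is continuous. For $\G'_\pi$, note $\mathcal M=(\pi\times\id)^{-1}\bigl((\G'_\pi)^{(2)}\bigr)$, so the restriction $(\pi\times\id)|_{\mathcal M}\colon\mathcal M\to(\G'_\pi)^{(2)}$ is obtained from the open surjection $\pi\times\id\colon\G_\pi\times\G'_\pi\to\G'_\pi\times\G'_\pi$ by restricting to the preimage of $(\G'_\pi)^{(2)}$, hence is itself an open surjection, so a quotient map; since the product map $(\G'_\pi)^{(2)}\to\G'_\pi$ precomposed with it sends $(g,h')$ to $\pi(g)h'=\pi(m(g,h'))$, i.e.\ equals the continuous map $\pi\circ m$, the universal property of quotient maps makes the product on $\G'_\pi$ continuous.

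I expect the multiplication step, and within it the case of $\G'_\pi$, to be the main obstacle: the preceding lemma only supplies a continuous $m$ into $\G_\pi$, so one has to transport it along the quotient map $(\pi\times\id)|_{\mathcal M}$ and verify that the relevant maps agree identically. The Hausdorff property is automatic from the metrics, and all the remaining items are bookkeeping with the results of Section 4.1.
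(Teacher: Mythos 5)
Your proof is correct, and it diverges from the paper's argument at the one genuinely nontrivial point, the continuity of multiplication. The paper also begins with Proposition \ref{consequofregular} (local compactness, $\pi:\G_\pi\to\G'_\pi$ continuous proper open) and Proposition \ref{regularVSlocalhomeo} (range maps are local homeomorphisms), and also builds the multiplication on the continuous map $m:\mathcal{M}\to\G_\pi$ of the preceding lemma; but from there it proceeds metrically: $m$ induces a continuous map $m_{\mathrm{H}}$ on compact subsets with respect to the Hausdorff metric, the identity $m_{\mathrm{H}}(\pi^{-1}(g')\times\{h'\})=\pi^{-1}(g'h')$ is checked, and a sequence estimate $\dH(\pi^{-1}(g'_k)\times\{h'_k\},\pi^{-1}(g')\times\{h'\})\leq\dH(\pi^{-1}(g'_k),\pi^{-1}(g'))+\d'(h'_k,h')$ converts convergence in $(\G'_\pi)^{(2)}$ into $\d'_\pi$-convergence of the products; the continuity of multiplication on $\G_\pi$ is then deduced from the $\G'_\pi$ case via the definition $\d_\pi=\d+\d'_\pi\circ\pi$. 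You instead observe that $\mathcal{M}=(\pi\times\id)^{-1}\bigl((\G'_\pi)^{(2)}\bigr)$, so $(\pi\times\id)|_{\mathcal M}$ is a continuous open surjection (hence a quotient map), and the product on $(\G'_\pi)^{(2)}$ factors through it as $\pi\circ m$; for $\G_\pi$ you write the product directly as $(g,h)\mapsto m(g,\pi(h))$, with the identification justified by the fiberwise injectivity in Setting \ref{factor}. Your route is purely topological (no Hausdorff-metric estimates, no sequences) and arguably more transparent, at the cost of the small extra verifications that the restriction of an open map to a saturated preimage is open and that the two maps agree pointwise; the paper's route stays entirely inside the metric framework of Section 4.1 it has already set up. You are also more explicit than the paper about the routine points it leaves implicit: closure of $\G_\pi$ and $\G'_\pi$ under the groupoid operations, identification of the unit spaces, second countability (via $\sigma$-compactness from Lemma \ref{Kdeltaiscpt}), and continuity of inversion (which you get from Proposition \ref{regularVSlocalhomeo} applied to the inversion maps, where the paper simply calls it obvious). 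No gaps.
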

\begin{proof}
By Proposition \ref{consequofregular}, 
$\G_\pi$ and $\G'_\pi$ are locally compact metric spaces. 
By Proposition \ref{regularVSlocalhomeo}, 
the range maps $r:\G_\pi\to\G_\pi^{(0)}$ and $r:\G'_\pi\to(\G'_\pi)^{(0)}$ 
are local homeomorphisms. 
It remains for us to show that 
the groupoid operations are continuous. 
Let us consider the multiplication on $\G'_\pi$. 
Let $m:\mathcal{M}\to\G_\pi$ be as in the lemma above. 
As $m$ is continuous, it gives rise to a continuous map $m_{\textrm{H}}$ 
from compact subsets of $\mathcal{M}$ to compact subsets of $\G_\pi$. 
For any $(g',h')\in(\G'_\pi)^{(2)}$, it is easy to see that 
\[
m_{\textrm{H}}(\pi^{-1}(g')\times\{h'\})=\pi^{-1}(g'h'). 
\]
Suppose that 
a sequence $((g'_k,h'_k))_k$ converges to $(g',h')$ in $(\G'_\pi)^{(2)}$. 
It is not so hard to see that the inequality 
\[
\dH(\pi^{-1}(g'_k)\times\{h'_k\},\pi^{-1}(g')\times\{h'\})
\leq\dH(\pi^{-1}(g'_k),\pi^{-1}(g'))+\d'(h'_k,h')
\]
holds, where $\dH$ in the left-hand side stands for 
the Hausdorff metric on $\mathcal{M}$ and 
$\dH$ in the right-hand side stands for the Hausdorff metric on $\G$. 
Since $((g'_k,h'_k))_k$ converges to $(g',h')$, we have 
\[
\lim_{k\to\infty}\dH(\pi^{-1}(g'_k),\pi^{-1}(g'))=0
\quad\text{and}\quad 
\lim_{k\to\infty}\d'(h'_k,h')=0, 
\]
and so 
\[
\lim_{k\to\infty}
\dH(\pi^{-1}(g'_k)\times\{h'_k\},\pi^{-1}(g')\times\{h'\})=0. 
\]
From the continuity of $m_{\textrm{H}}$, we obtain 
$\dH(\pi^{-1}(g'_kh'_k),\pi^{-1}(g'h'))\to0$ as $k\to\infty$. 
Hence, $(g'_kh'_k)_k$ converges to $g'h'$ in $\G'_\pi$, as desired. 
The continuity of the inverse operation is obvious. 
Finally, the continuity of the groupoid operations in $\G_\pi$ 
easily follows from the arguments for $\G'_\pi$. 
\end{proof}

Clearly, 
the space $\G_\pi^{(n)}$ of composable strings of $n$ elements 
equals the space $(\G^{(n)})_{\pi^{(n)}}$ 
constructed from the regular map $\pi^{(n)}:\G^{(n)}\to\G'^{(n)}$ 
as sets. 
The following technical lemma ensures 
that their topologies also agree. 

\begin{lemma}
Let $\pi:\G\to\G'$ be as in Setting \ref{factor}. 
Suppose that $\pi$ is regular. 
\begin{enumerate}
\item For any $n\in\N\setminus\{1\}$, 
the spaces $\G_\pi^{(n)}$ and $(\G^{(n)})_{\pi^{(n)}}$ are 
canonically homeomorphic. 
\item For any $n\in\N\setminus\{1\}$, 
the spaces $(\G'_\pi)^{(n)}$ and $(\G'^{(n)})_{\pi^{(n)}}$ are 
canonically homeomorphic. 
\end{enumerate}
\end{lemma}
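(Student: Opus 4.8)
The plan is to prove, for every $n\geq2$, that $\G_\pi^{(n)}$ and $(\G^{(n)})_{\pi^{(n)}}$ have the same underlying set and that the identity map between them is a homeomorphism; part (2) will be handled by the same argument applied to the primed groupoids $\G'_\pi$ and $(\G'^{(n)})_{\pi^{(n)}}$. The first point is to identify the underlying sets. For $g'\in\G'$ write $\pi^{-1}(g')$ for the fibre. Setting \ref{factor} gives that $\pi\colon r^{-1}(x)\to r^{-1}(\pi(x))$ is a bijection for every unit $x$, and conjugating with the inversion map one deduces that $\pi\colon s^{-1}(x)\to s^{-1}(\pi(x))$ is a bijection too. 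Using these two facts together with induction along a composable string, one checks that for $\xi'=(g'_1,\dots,g'_n)\in\G'^{(n)}$ and each $i$ the coordinate projection $\mathrm{pr}_i$ restricts to a bijection $(\pi^{(n)})^{-1}(\xi')\to\pi^{-1}(g'_i)$: given one coordinate of a string in $(\pi^{(n)})^{-1}(\xi')$, all the other coordinates are forced by the range/source bijections. In particular $\#(\pi^{(n)})^{-1}(\xi')=\#\pi^{-1}(g'_i)$ is independent of $i$, whence $\xi'\in(\G'^{(n)})_{\pi^{(n)}}$ if and only if $g'_i\in\G'_\pi$ for every $i$, i.e.\ if and only if $\xi'\in(\G'_\pi)^{(n)}$; applying the same reasoning to $\pi^{(n)}(\xi)$ for $\xi\in\G^{(n)}$ identifies the underlying set of $(\G^{(n)})_{\pi^{(n)}}$ with that of $\G_\pi^{(n)}$.

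Next I would show that the identity map $(\G^{(n)})_{\pi^{(n)}}\to\G_\pi^{(n)}$, and likewise $(\G'^{(n)})_{\pi^{(n)}}\to(\G'_\pi)^{(n)}$, is continuous. With the $\ell^1$-product metrics in force, each coordinate projection $\mathrm{pr}_i\colon\G^{(n)}\to\G$ is $1$-Lipschitz, and a $1$-Lipschitz map does not increase the Hausdorff distance between compact sets; since $\mathrm{pr}_i((\pi^{(n)})^{-1}(\pi^{(n)}\xi))=\pi^{-1}(\pi(g_i))$ by the previous step, this gives $\d_\pi(\mathrm{pr}_i\xi,\mathrm{pr}_i\eta)\leq\d_{\pi^{(n)}}(\xi,\eta)$, so $\mathrm{pr}_i\colon(\G^{(n)})_{\pi^{(n)}}\to\G_\pi$ is continuous. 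As $\G_\pi^{(n)}$ carries the subspace topology from $(\G_\pi)^n$, continuity of all the $\mathrm{pr}_i$ yields continuity of the identity map. The primed version is entirely analogous, with $\d'_\pi$ in place of $\d_\pi$.

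Finally I would upgrade this continuous bijection to a homeomorphism. The first-coordinate projections $p\colon\G^{(n)}\to\G$ and $p'\colon\G'^{(n)}\to\G'$ are surjective local homeomorphisms with $\pi\circ p=p'\circ\pi^{(n)}$, and by the parametrization fact above $p$ restricts to a bijection $(\pi^{(n)})^{-1}(\xi')\to\pi^{-1}(p'(\xi'))$ for every $\xi'$. Since $\pi$ is regular --- hence so is $\pi^{(n)}$ by Lemma \ref{pi^nregular} --- Proposition \ref{regularVSlocalhomeo} applies and gives that $p\colon(\G^{(n)})_{\pi^{(n)}}\to\G_\pi$ and $p'\colon(\G'^{(n)})_{\pi^{(n)}}\to\G'_\pi$ are local homeomorphisms. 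On the other hand $\G_\pi$ and $\G'_\pi$ are \'etale groupoids by Proposition \ref{Gpiisetale}, so the first-coordinate projections $\G_\pi^{(n)}\to\G_\pi$ and $(\G'_\pi)^{(n)}\to\G'_\pi$ are local homeomorphisms by the usual bisection argument for an \'etale groupoid. The identity map thus commutes with two local homeomorphisms over the common base $\G_\pi$ (resp.\ $\G'_\pi$), and a continuous bijection $f\colon A\to B$ with local homeomorphisms $p_A\colon A\to C$, $p_B\colon B\to C$ satisfying $p_B\circ f=p_A$ is automatically a homeomorphism: near any point, $f$ agrees with $(p_B|_W)^{-1}\circ p_A$ on a sufficiently small neighbourhood, so $f$ is a local homeomorphism, hence a homeomorphism since it is bijective. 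This proves both (1) and (2). The delicate step, in my view, is the first one: extracting the source-fibre bijection and the ``parametrized by any one coordinate'' property from Setting \ref{factor}, since these simultaneously pin down the underlying sets and supply the fibre-bijection hypothesis needed to invoke Proposition \ref{regularVSlocalhomeo}; everything afterwards is routine metric and point-set topology.
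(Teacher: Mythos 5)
Your argument is correct and is essentially the paper's own proof: the paper likewise records the set equality, invokes Lemma \ref{pi^nregular} for regularity of $\pi^{(n)}$, and applies Proposition \ref{regularVSlocalhomeo} to a coordinate-type projection (it uses $\xi\mapsto r(g_1)$ onto $\G^{(0)}$ rather than your $\xi\mapsto g_1$ onto $\G$) together with Proposition \ref{Gpiisetale} to compare the two topologies over a common base. The only difference is that you spell out steps the paper treats as clear --- the fibre parametrization yielding the set identification, the continuity of the identity map via the Lipschitz estimate on Hausdorff distances, and the patching of local inverses --- so this is a faithful filling-in of the same route rather than a different one.
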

\begin{proof}
Notice that $\pi^{(n)}$ is regular by Lemma \ref{pi^nregular}. 
We show (1). 
By definition, 
\[
\G_\pi^{(n)}=\{(g_1,g_2,\dots,g_n)\in\G_\pi^n\mid s(g_i)=r(g_{i+1})\}
\]
and 
\[
(\G^{(n)})_{\pi^{(n)}}=\{(g_1,g_2,\dots,g_n)\in\G^{(n)}
\mid\#(\pi^{(n)})^{-1}(g_1,g_2,\dots,g_n)>1\}
\]
agree as sets. 
We have to show that they share the same topology. 
The topology on $(\G^{(n)})_{\pi^{(n)}}$ 
comes from the regularity of $\pi^{(n)}:\G^{(n)}\to\G'^{(n)}$. 
The map 
\[
\phi:(g_1,g_2,\dots,g_n)\mapsto r(g_1)
\]
is a local homeomorphism from $\G^{(n)}$ to $\G^{(0)}$, 
and the map 
\[
\phi':(g'_1,g'_2,\dots,g'_n)\mapsto r(g'_1)
\]
is a local homeomorphism from $\G'^{(n)}$ to $\G'^{(0)}$. 
By applying Proposition \ref{regularVSlocalhomeo} to $\phi$ and $\phi'$, 
we can conclude that 
$\phi:(\G^{(n)})_{\pi^{(n)}}\to\G_\pi^{(0)}$ and 
$\phi':(\G'^{(n)})_{\pi^{(n)}}\to(\G'_\pi)^{(0)}$ are local homeomorphisms. 
On the other hand, 
the space $\G_\pi^{(n)}$ is equipped with the relative topology 
from the product space $\G_\pi^n$, and the map 
\[
\phi:(g_1,g_2,\dots,g_n)\mapsto r(g_1)
\]
gives a local homeomorphism from $\G_\pi^{(n)}$ to $\G_\pi^{(0)}$. 
It follows that 
$\G_\pi^{(n)}$ and $(\G^{(n)})_{\pi^{(n)}}$ share the same topology. 
One can prove (2) in the same way. 
\end{proof}

Now, we are in a position to prove that 
$\G\to\G'$ and $\G_\pi\to\G'_\pi$ have 
isomorphic relative homology groups. 
In the proof, 
the reduction map introduced in Section 4.1 plays a crucial role. 

\begin{proposition}\label{iso_factor}
Let $\pi:\G\to\G'$ be as in Setting \ref{factor}. 
Suppose that $\pi$ is regular. 
Let $A$ be a discrete abelian group. 
We denote by $\Pi_n$ the reduction map 
arising from $\pi^{(n)}:\G^{(n)}\to\G'^{(n)}$. 
\begin{enumerate}
\item $\Pi_n$ is an isomorphism 
from $C_c(\G^{(n)},A)/C_c(\G'^{(n)},A)$ 
to $C_c(\G_\pi^{(n)},A)/C_c((\G'_\pi)^{(n)},A)$. 
\item The family $\Pi_*$ of the reduction maps $(\Pi_n)_n$ gives 
an isomorphism between the chain complexes 
$\mathcal{C}(\G,A)/\mathcal{C}(\G',A)$ and 
$\mathcal{C}(\G_\pi,A)/\mathcal{C}(\G'_\pi,A)$. 
Thus, we have the following diagram: 
\[
\xymatrix@M=10pt{
0 \ar[r] & \mathcal{C}(\G',A) \ar[r]^{\pi^*} & \mathcal{C}(\G,A) \ar[r] 
& \mathcal{C}(\G,A)/\mathcal{C}(\G',A) \ar[r] \ar[d]^{\Pi_*}_\cong & 0 \\
0 \ar[r] & \mathcal{C}(\G'_\pi,A) \ar[r]_{\pi^*} 
& \mathcal{C}(\G_\pi,A) \ar[r] 
& \mathcal{C}(\G_\pi,A)/\mathcal{C}(\G'_\pi,A) \ar[r] & 0. 
}
\]
\end{enumerate}
\end{proposition}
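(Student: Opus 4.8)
The plan is to obtain part~(1) by applying Proposition~\ref{reductionisiso} degreewise, and part~(2) by applying Proposition~\ref{reductioncommute} to the face maps of the nerves of $\G$ and $\G'$; the two rows of the diagram are then just the defining presentations of the quotient complexes. For part~(1), fix $n\in\N\cup\{0\}$. By Lemma~\ref{pi^nregular} the map $\pi^{(n)}:\G^{(n)}\to\G'^{(n)}$ is a regular continuous proper surjection (for $n=1$ this is precisely the hypothesis that $\pi$ is regular, and the other cases follow from it by the lemma), and $\G^{(n)},\G'^{(n)}$ are totally disconnected because $\G,\G'$ are. Proposition~\ref{reductionisiso} then says that $\Pi_n$ carries $C_c(\G^{(n)},A)$ onto $C_c((\G^{(n)})_{\pi^{(n)}},A)/C_c((\G'^{(n)})_{\pi^{(n)}},A)$ with kernel exactly $C_c(\G'^{(n)},A)$, hence descends to an isomorphism out of $C_c(\G^{(n)},A)/C_c(\G'^{(n)},A)$. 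The lemma immediately preceding this proposition identifies $(\G^{(n)})_{\pi^{(n)}}$ with $\G_\pi^{(n)}$ and $(\G'^{(n)})_{\pi^{(n)}}$ with $(\G'_\pi)^{(n)}$ as topological spaces (with the cases $n=0,1$ being trivial), and this converts $\Pi_n$ into the asserted isomorphism.

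For part~(2), since $\partial_n=\sum_{i=0}^n(-1)^id_{i*}$ it is enough to show that $\Pi_*$ commutes with each induced face operator $d_{i*}$ on the quotient complexes. I would apply Proposition~\ref{reductioncommute} with $\pi_1=\pi^{(n)}$, $\pi_2=\pi^{(n-1)}$, $\phi=d_i:\G^{(n)}\to\G^{(n-1)}$ and $\phi'=d_i:\G'^{(n)}\to\G'^{(n-1)}$. One must check the hypotheses of Proposition~\ref{regularVSlocalhomeo}: the face maps are surjective local homeomorphisms (local homeomorphy is recorded in Section~2, and surjectivity follows because each $d_i$ admits a degeneracy section $s_j$); $\pi^{(n-1)}\circ d_i=d_i\circ\pi^{(n)}$ because $\pi$ is a homomorphism; $\pi^{(n)}$ and $\pi^{(n-1)}$ are regular by Lemma~\ref{pi^nregular}; and all four spaces are totally disconnected. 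The substantive point is that $d_i$ must restrict to a bijection $(\pi^{(n)})^{-1}(\xi')\to(\pi^{(n-1)})^{-1}(d_i(\xi'))$ for every $\xi'\in\G'^{(n)}$.

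To verify this, note that by the fibre-bijectivity hypothesis of Setting~\ref{factor} a string in $(\pi^{(n)})^{-1}(g'_1,\dots,g'_n)$ is completely determined by the range of its first entry, which is an element of $\pi^{-1}(r(g'_1))$. The inner face maps $d_i$ with $1\le i\le n-1$ and the last face map $d_n$ leave that range unchanged, so they are fibrewise bijective. For $d_0$ --- which for $n=1$ is the source map --- one additionally uses that $\pi$ is bijective on each source fibre (obtained by applying the inverse map to the range-fibre hypothesis), which shows that $g\mapsto s(g)$ is a bijection $\pi^{-1}(g'_1)\to\pi^{-1}(s(g'_1))$ and hence that $d_0$ is fibrewise bijective. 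Proposition~\ref{reductioncommute} then gives $\tau\circ\Pi_n=\Pi_{n-1}\circ\sigma$, where $\sigma$ and $\tau$ are exactly the maps induced by $d_{i*}$ on the two quotient complexes; summing over $i$ with signs shows $\Pi_*$ is a chain map, and since it is an isomorphism in each degree by part~(1) it is an isomorphism of chain complexes. Finally, both rows of the diagram are short exact: $\pi^*$ is injective because pullback along the surjections $\pi^{(n)}$ and $\pi^{(n)}|_{\G_\pi^{(n)}}:\G_\pi^{(n)}\to(\G'_\pi)^{(n)}$ is injective, and $\mathcal{C}(\G,A)/\mathcal{C}(\G',A)$ and $\mathcal{C}(\G_\pi,A)/\mathcal{C}(\G'_\pi,A)$ are the cokernels of $\pi^*$ by definition.

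I expect the main obstacle to be the fibrewise-bijectivity check for $d_0$ (and for the source map in degree one), together with the bookkeeping needed to make the combinatorial face maps agree with the maps $\phi,\phi'$ of Proposition~\ref{reductioncommute} under the identifications $\G_\pi^{(n)}\cong(\G^{(n)})_{\pi^{(n)}}$ and $(\G'_\pi)^{(n)}\cong(\G'^{(n)})_{\pi^{(n)}}$; the remaining steps are a routine assembly of results proved earlier in this section.
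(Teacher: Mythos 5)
Your proposal is correct and follows exactly the paper's route: part (1) is deduced degreewise from Proposition \ref{reductionisiso} (via Lemma \ref{pi^nregular} and the identification $\G_\pi^{(n)}\cong(\G^{(n)})_{\pi^{(n)}}$), and part (2) from Proposition \ref{reductioncommute} applied to the face maps, which is precisely what the paper's two-line proof invokes. The only difference is that you spell out the hypotheses of Proposition \ref{reductioncommute} — in particular the fibrewise bijectivity of the $d_i$, including the $d_0$ case via bijectivity of $\pi$ on source fibres — which the paper leaves implicit, and your verification of these points is accurate.
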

\begin{proof}
(1) is a direct consequence of Proposition \ref{reductionisiso}. 
In order to see (2), 
it suffices to verify that 
the reduction maps $\Pi_n$ commute with differentials, 
and this follows from Proposition \ref{reductioncommute}. 
\end{proof}

We let $H_*(\G/\G',A)$ (resp.\ $H_*(\G_\pi/\G'_\pi,A)$) 
symbolically denote the homology groups of 
the chain complex $\mathcal{C}(\G,A)/\mathcal{C}(\G',A)$ 
(resp.\ $\mathcal{C}(\G_\pi,A)/\mathcal{C}(\G'_\pi,A)$). 

\begin{theorem}\label{LES_factor}
Let $\pi:\G\to\G'$ be as in Setting \ref{factor}. 
Suppose that $\pi$ is regular. 
Let $A$ be a discrete abelian group. 
Then we have the following diagram: 
\[
\xymatrix@M=10pt{
\dots \ar[r] & H_n(\G',A) \ar[r]^{H_n^*(\pi)} & H_n(\G,A) \ar[r] 
& H_n(\G/\G',A) \ar[r] \ar@{=}[d] & \dots \\
\dots \ar[r] & H_n(\G'_\pi,A) \ar[r]^{H_n^*(\pi)} & H_n(\G_\pi,A) \ar[r] 
& H_n(\G_\pi/\G'_\pi,A) \ar[r] & \dots, 
}
\]
where the two horizontal sequences are exact. 
\end{theorem}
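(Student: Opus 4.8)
The plan is to feed the commutative square of short exact sequences of chain complexes supplied by Proposition~\ref{iso_factor} into the standard long exact homology sequence machinery. First I would check that both rows of that square are genuinely short exact. Since $\pi^{(n)}\colon\G^{(n)}\to\G'^{(n)}$ is a continuous proper surjection (Setting~\ref{factor} together with Lemma~\ref{pi^nregular}), the pull-back $\pi^*\colon C_c(\G'^{(n)},A)\to C_c(\G^{(n)},A)$ is injective and commutes with the boundary maps $\partial_n$, so $\mathcal{C}(\G',A)$ is identified via $\pi^*$ with a subcomplex of $\mathcal{C}(\G,A)$ having quotient $\mathcal{C}(\G,A)/\mathcal{C}(\G',A)$; the identical remark applies to $\pi\colon\G_\pi\to\G'_\pi$, which by Proposition~\ref{Gpiisetale} and Proposition~\ref{consequofregular} is a continuous proper surjective homomorphism between \'etale groupoids. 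Thus both rows of the diagram in Proposition~\ref{iso_factor} are short exact sequences of chain complexes.

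Next I would apply the zig-zag (snake) lemma to each of the two rows. For the top row this produces the exact sequence
\[
\cdots\longrightarrow H_n(\G',A)\xrightarrow{\,H_n^*(\pi)\,}H_n(\G,A)\longrightarrow H_n(\G/\G',A)\xrightarrow{\,\partial\,}H_{n-1}(\G',A)\longrightarrow\cdots,
\]
where $H_n^*(\pi)$ is by definition the map induced on homology by $\pi^*$ and $\partial$ is the connecting homomorphism; running the same argument on the bottom row gives the analogous exact sequence with $\G_\pi,\G'_\pi$ in place of $\G,\G'$. This establishes exactness of the two horizontal rows displayed in the theorem.

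Finally, Proposition~\ref{iso_factor}(2) asserts that $\Pi_*$ is an isomorphism of chain complexes from $\mathcal{C}(\G,A)/\mathcal{C}(\G',A)$ to $\mathcal{C}(\G_\pi,A)/\mathcal{C}(\G'_\pi,A)$, hence induces isomorphisms $H_n(\G/\G',A)\xrightarrow{\cong}H_n(\G_\pi/\G'_\pi,A)$ for every $n$; using these isomorphisms as the vertical identifications yields precisely the diagram in the statement. I do not anticipate any genuine obstacle in the present argument: all the real content has already been absorbed into Proposition~\ref{iso_factor} (and, behind it, into Propositions~\ref{Gpiisetale} and~\ref{consequofregular} and the analysis of the reduction map in Section~4.1), and what remains here is a purely formal application of homological algebra. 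If one wants the full ladder to commute rather than just the indicated vertical equality, one invokes naturality of the zig-zag lemma for the morphism of short exact sequences in Proposition~\ref{iso_factor}; but this is not part of the assertion. The one point that deserves an explicit line is the injectivity of $\pi^*$ on chains, which makes the two rows genuinely short exact — and this is immediate from the surjectivity of $\pi^{(n)}$.
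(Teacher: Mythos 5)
Your proposal is correct and matches the paper's (implicit) argument exactly: Theorem \ref{LES_factor} is stated without a separate proof precisely because it is the standard long exact homology sequence applied to the two short exact sequences of chain complexes in Proposition \ref{iso_factor}, with the vertical identification induced by the chain-complex isomorphism $\Pi_*$. Your added remark on the injectivity of $\pi^*$ (from surjectivity of $\pi^{(n)}$) is the only detail the paper leaves tacit, and it is handled correctly.
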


\section{Examples: SFT groupoids}

\subsection{SFT groupoids}

In this subsection, we briefly recall the notion of SFT groupoids. 
The reader may refer to \cite{Ma12PLMS,Ma15crelle} 
for a more complete background. 

Let $(V,E)$ be a finite directed graph, 
where $V$ is a finite set of vertices 
and $E$ is a finite set of edges. 
For $e\in E$, $i_E(e)$ denotes the initial vertex of $e$ and 
$t_E(e)$ denotes the terminal vertex of $e$. 
Let $B:=(B(\xi,\eta))_{\xi,\eta\in V}$ be 
the adjacency matrix of $(V,E)$, that is, 
\[
B(\xi,\eta):=\#\{e\in E\mid i_E(e)=\xi,\ t_E(e)=\eta\}. 
\]
We assume that $B$ is irreducible 
(i.e.\ for all $\xi,\eta\in V$ 
there exists $n\in\N$ such that $B^n(\xi,\eta)>0$) 
and that $B$ is not a permutation matrix. 
Define 
\[
X_{(V,E)}:=\{(x_k)_{k\in\N}\in E^\N
\mid t(x_k)=i(x_{k+1})\quad\forall k\in\N\}. 
\]
With the product topology, $X_{(V,E)}$ is a Cantor set. 
Define a surjective continuous map $\sigma:X_{(V,E)}\to X_{(V,E)}$ by 
\[
\sigma(x)_k:=x_{k+1}\quad k\in\N,\ x=(x_k)_k\in X_{(V,E)}. 
\]
In other words, $\sigma$ is the (one-sided) shift on $X_{(V,E)}$.  
It is easy to see that $\sigma$ is a local homeomorphism. 
The dynamical system $(X_{(V,E)},\sigma)$ is called 
the one-sided irreducible shift of finite type (SFT) 
associated with the graph $(V,E)$ (or the matrix $B$). 

The \'etale groupoid $\G_{(V,E)}$ is given by 
\[
\G_{(V,E)}:=\{(x,n,y)\in X_{(V,E)}\times\Z\times X_{(V,E)}\mid
\exists k,l\in\N,\ n=k{-}l,\ \sigma^k(x)=\sigma^l(y)\}. 
\]
The topology of $\G_{(V,E)}$ is generated by the sets 
\[
\{(x,k{-}l,y)\in\G_{(V,E)}\mid x\in P,\ y\in Q,\ \sigma^k(x)=\sigma^l(y)\}, 
\]
where $P,Q\subset X_{(V,E)}$ are open and $k,l\in\N$. 
Two elements $(x,n,y)$ and $(x',n',y')$ in $\G_{(V,E)}$ are composable 
if and only if $y=x'$, and the multiplication and the inverse are 
\[
(x,n,y)\cdot(y,n',y')=(x,n{+}n',y'),\quad (x,n,y)^{-1}=(y,-n,x). 
\]
We identify $X_{(V,E)}$ with the unit space $\G_{(V,E)}^{(0)}$ 
via $x\mapsto(x,0,x)$. 
We call $\G_{(V,E)}$ the SFT groupoid 
associated with the graph $(V,E)$. 

The homology groups of the SFT groupoid $\G_{(V,E)}$ was 
computed in \cite{Ma12PLMS}. 

\begin{theorem}[{\cite[Theorem 4.14]{Ma12PLMS}}]
Let $(V,E)$, $B$ and $\G_{(V,E)}$ be as above. 
Then one has 
\[
H_n(\G_{(V,E)})\cong\begin{cases}\Coker(I-B^t)&n=0\\
\Ker(I-B^t)&n=1\\0&n\geq2, \end{cases}
\]
where the matrix $B$ acts on the abelian group $\Z^V$ by multiplication. 
\end{theorem}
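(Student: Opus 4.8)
The plan is to realise $\G_{(V,E)}$ as a $\Z$-extension of its AF core, so that a Pimsner--Voiculescu-type long exact sequence reduces the computation to an algebraic manipulation of an inductive limit. The first step is to analyse the open subgroupoid
\[
\mathcal{R}:=\{(x,0,y)\in\G_{(V,E)}\mid\exists k\in\N,\ \sigma^k(x)=\sigma^k(y)\},
\]
which satisfies $\mathcal{R}^{(0)}=\G_{(V,E)}^{(0)}=X_{(V,E)}$ and is the increasing union of the compact open elementary subgroupoids $\mathcal{R}_k:=\{(x,0,y)\mid\sigma^k(x)=\sigma^k(y)\}$; thus $\mathcal{R}$ is an AF groupoid. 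By the known homology of AF groupoids, $H_n(\mathcal{R})=0$ for $n\ge1$ and $H_0(\mathcal{R})=\varinjlim_k H_0(\mathcal{R}_k)$. Identifying $H_0(\mathcal{R}_k)\cong C(X_{(V,E)},\Z)$ via the orbit map $\sigma^k$, the inclusions $\mathcal{R}_k\hookrightarrow\mathcal{R}_{k+1}$ induce the pushforward $\sigma_*$ along the shift, so $H_0(\mathcal{R})\cong\varinjlim\bigl(C(X_{(V,E)},\Z),\sigma_*\bigr)$. The finite-rank subgroup $\Z^V\subset C(X_{(V,E)},\Z)$ spanned by the characteristic functions $1_{X_\mu}$ of the vertex cylinders $X_\mu:=\{x\mid i_E(x_1)=\mu\}$ is $\sigma_*$-invariant with $\sigma_*$ acting on it by $B^t$, and it is cofinal in the inductive system (the characteristic function of any cylinder set is carried into $\Z^V$ by a power of $\sigma_*$). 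Hence $H_0(\mathcal{R})\cong\varinjlim(\Z^V,B^t)=:G$.

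Next, the continuous cocycle $c\colon\G_{(V,E)}\to\Z$, $(x,n,y)\mapsto n$, exhibits $\mathcal{R}=c^{-1}(\{0\})$ as an open subgroupoid of $\G_{(V,E)}$; the inclusion $\mathcal{C}(\mathcal{R},A)\subset\mathcal{C}(\G_{(V,E)},A)$ is a subcomplex, giving a short exact sequence of chain complexes as in Proposition~\ref{iso_sub}, and hence a long exact homology sequence. (The subgroupoid part of Theorem~\ref{LES_sub} is vacuous here, since $r(\Delta)=\G_{(V,E)}^{(0)}$ forces $\H=\G_{(V,E)}$ and $\H'=\mathcal{R}$; the genuine input is the $\Z$-grading carried by $c$.) I would then show that the relative complex $\mathcal{C}(\G_{(V,E)},A)/\mathcal{C}(\mathcal{R},A)=(C_c(\G_{(V,E)}^{(n)}\setminus\mathcal{R}^{(n)},A))_n$ is the mapping cone of the endomorphism $\id-\delta_*$ of $\mathcal{C}(\mathcal{R},A)$ coming from the $\Z$-grading (i.e.\ induced by the shift), which by a standard diagram chase converts the above into the Pimsner--Voiculescu exact sequence
\[
\cdots\longrightarrow H_n(\mathcal{R})\xrightarrow{\ \id-\delta_*\ }H_n(\mathcal{R})\longrightarrow H_n(\G_{(V,E)})\longrightarrow H_{n-1}(\mathcal{R})\longrightarrow\cdots.
\]
Since $H_n(\mathcal{R})=0$ for $n\ge1$, this yields $H_n(\G_{(V,E)})=0$ for $n\ge2$ together with an exact sequence $0\to H_1(\G_{(V,E)})\to G\xrightarrow{\id-\delta_*}G\to H_0(\G_{(V,E)})\to0$, so $H_1(\G_{(V,E)})\cong\Ker(\id_G-\delta_*)$ and $H_0(\G_{(V,E)})\cong\Coker(\id_G-\delta_*)$.

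It remains to identify $\delta_*$ on $G=\varinjlim(\Z^V,B^t)$ and to transfer $\Ker$ and $\Coker$ back to $\Z^V$. Following the shift through the identifications of the first step shows that $\delta_*\circ\iota_k=\iota_k\circ B^t$ for the structure maps $\iota_k\colon\Z^V\to G$, i.e.\ $\delta_*=\varinjlim(B^t)$ is the canonical automorphism of $G$, and therefore $\id_G-\delta_*=\varinjlim(I-B^t)$ (the system-endomorphism $I-B^t$ commutes with the connecting maps $B^t$). Since filtered colimits of abelian groups are exact, taking the colimit commutes with $\Ker$ and $\Coker$; and because $B^t$ restricts to the identity on $\Ker(I-B^t)$ and induces the identity on $\Coker(I-B^t)$, these colimits are stationary, so $\Ker(\id_G-\delta_*)\cong\Ker(I-B^t)$ and $\Coker(\id_G-\delta_*)\cong\Coker(I-B^t)$. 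Together with the second step this gives precisely $H_0(\G_{(V,E)})\cong\Coker(I-B^t)$, $H_1(\G_{(V,E)})\cong\Ker(I-B^t)$, and $H_n(\G_{(V,E)})=0$ for $n\ge2$.

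The main obstacle is the second step: establishing the Pimsner--Voiculescu sequence in the stated form and, in tandem, identifying the automorphism $\delta_*$. The delicate point is that, $\sigma$ being non-injective, this $\Z$-action on $H_*(\mathcal{R})$ is not implemented by honest conjugation by a bisection but must be extracted from the $\Z$-graded (Hilbert-bimodule) structure of $\G_{(V,E)}$ over $\mathcal{R}$; equivalently, recognising $\mathcal{C}(\G_{(V,E)},A)/\mathcal{C}(\mathcal{R},A)$ as a mapping cone requires a careful description of the spaces $\G_{(V,E)}^{(n)}\setminus\mathcal{R}^{(n)}$. The auxiliary inputs --- the homology of AF groupoids, continuity of homology under inductive limits, and the identification $H_0(\mathcal{R}_k)\cong C(X_{(V,E)},\Z)$ with connecting maps $\sigma_*$ (in particular the appearance of $B^t$ rather than $B$) --- are standard but not contained in the present excerpt, and would be cited from \cite{Ma12PLMS,CM00crelle}.
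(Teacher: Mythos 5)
First, note that the paper does not prove this theorem: it is quoted from \cite[Theorem 4.14]{Ma12PLMS}, so the comparison is with Matui's original argument. Your outer strategy coincides with it in spirit (reduce to the AF core, whose $H_0$ is the dimension group $\varinjlim(\Z^V,B^t)$ with vanishing higher homology, then run a Pimsner--Voiculescu-type sequence for the canonical cocycle $c$), and your peripheral computations are fine: the identification $H_0(\mathcal{R}_k)\cong C(X_{(V,E)},\Z)$ with connecting maps $\sigma_*$, the appearance of $B^t$, cofinality of $\Z^V$, and the passage from $\Ker(\id-\varinjlim B^t)$ and $\Coker(\id-\varinjlim B^t)$ on the limit back to $\Ker(I-B^t)$ and $\Coker(I-B^t)$ (using that $B^t$ acts as the identity on both) are all correct.

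The genuine gap is exactly the step you yourself call the main obstacle, and it cannot be discharged by ``a standard diagram chase.'' You assert that the relative complex $\mathcal{C}(\G_{(V,E)},A)/\mathcal{C}(\mathcal{R},A)$ is the mapping cone of $\id-\delta_*$ on $\mathcal{C}(\mathcal{R},A)$, where $\delta_*$ is induced by the shift. No argument is given, and this identification is not evident: the spaces $\G_{(V,E)}^{(n)}\setminus\mathcal{R}^{(n)}$ decompose according to the pattern of cocycle values of the $n$ entries in a complicated way, and I see no canonical chain-level map exhibiting the quotient as such a cone; since $\sigma$ is non-injective, $\delta_*$ is merely an endomorphism of $\mathcal{C}(\mathcal{R},A)$, not an automorphism, and the claim that the connecting maps of the resulting long exact sequence are precisely $\id-\delta_*$ is the entire content of the theorem, not a formality. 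The established proof avoids this: Matui works with the skew product $\G_{(V,E)}\times_c\Z$, which is an AF groupoid carrying an \emph{honest} $\Z$-action by automorphisms, and obtains the exact sequence $\cdots\to H_n(\G\times_c\Z)\to H_n(\G\times_c\Z)\to H_n(\G)\to H_{n-1}(\G\times_c\Z)\to\cdots$ from the Crainic--Moerdijk spectral sequence for the cocycle (which degenerates because $\Z$ has cohomological dimension one); the relative complex of the pair $(\G_{(V,E)},\mathcal{R})$ never enters. To repair your write-up you should either prove the mapping-cone identification in detail (including checking that the induced map is $\id-\delta_*$ and not, say, $\id-\delta_*^{-1}$ on homology, a sign/direction issue your sketch does not address), or replace that step by the skew-product argument and then identify $H_*(\G\times_c\Z)$ with $H_*(\mathcal{R})$ via Kakutani equivalence. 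A minor additional inaccuracy: your parenthetical that Theorem \ref{LES_sub} is ``vacuous'' with $\H=\G_{(V,E)}$ is not quite right, since in Setting \ref{sub} the groupoids $\H$ and $\H'$ are retopologized (quotient and disjoint-union topologies) and regularity of the inclusion would have to be checked; this does not affect your argument, which only uses the short exact sequence of complexes, but it should not be stated as an application of that theorem.
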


See \cite{Ma16Adv,FKPS19Munster} for further developments.

\subsection{Examples}

\begin{example}\label{exoffactorSFT}
Let $(V,E)$ be a finite directed graph and
let $B:=(B(\xi,\eta))_{\xi,\eta\in V}$ be the adjacency matrix of $(V,E)$. 
We assume that $B$ is irreducible 
and that $B$ is not a permutation matrix. 
We also identify $B$ with the induced homomorphism $\Z^V\to\Z^V$. 

We construct new finite directed graphs $(\V,\E)$ and $(\W,\F)$ 
as follows. 
Set 
\[
\V:=V,\quad \W:=V\times\Z_2
\]
\[
\E:=E\sqcup\W,\quad \F:=\E\times\Z_2. 
\]
Define $i_\E:\E\to\V$ and $t_\E:\E\to\V$ by 
\[
i_\E(e):=i_E(e),\quad i_\E(v,p):=v,\quad 
t_\E(e):=t_E(e),\quad t_\E(v,p):=v
\]
for every $e\in E$, $v\in V$ and $p\in\Z_2$. 
Then, the adjacency matrix of $(\V,\E)$ is $B+2I$, 
where $I$ denotes the identity matrix. 
We define $i_\F:\F\to\W$ and $t_\F:\F\to\W$ by 
\[
i_\F(e,q):=(i_\E(e),q),\quad i_\F(v,p,q):=(v,q)
\]
\[
t_\F(e,q):=(t_\E(e),q),\quad t_\F(v,p,q):=(v,p{+}q). 
\]
for every $e\in E$, $v\in V$ and $p,q\in\Z_2$. 
Then, the adjacency matrix of $(\W,\F)$ can be written as 
\[
\begin{bmatrix}
B+I & I \\ I & B+I
\end{bmatrix}. 
\]

Now, by sending $(e,q)\in\F$ to $e\in\E$ and 
$(v,p,q)\in\F$ to $(v,q)\in\E$, 
we obtain a graph homomorphism $\rho:(\W,\F)\to(\V,\E)$. 
We remark that $\rho$ has the following property: 
for any $\omega\in\W$, 
the map $\rho:t_{\F}^{-1}(\omega)\to t_{\E}^{-1}(\rho(\omega))$ 
is bijective. 
(This property is known as ``left-covering''. 
Some relevant arguments can be found in 
\cite[Proposition 1]{BKM85PAMS} and \cite[Corollary 3.9]{Pu00JLMS}.) 
Let $\pi:\G_{(\W,\F)}\to\G_{(\V,\E)}$ be the groupoid homomorphism 
induced by $\rho$. 
Put $\G:=\G_{(\W,\F)}$ and $\G':=\G_{(\V,\E)}$. 
It is not so hard to see that $\pi$ is surjective and proper. 
Moreover, we have 
\[
(\G'_\pi)^{(0)}=\left\{(x_k)_k\in X_{(\V,\E)}\mid x_k\in E
\text{ for all sufficiently large $k$}\right\}, 
\]
and $\pi$ is two to one on $(\G'_\pi)^{(0)}$. 
By using Lemma \ref{pi^nregular}, one can verify that $\pi$ is regular. 
Define a compact open subset $Y\subset(\G'_\pi)^{(0)}$ by 
\[
Y:=\left\{(x_k)_k\in(\G'_\pi)^{(0)}
\mid x_k\in E\quad\forall k\in\N\right\}. 
\]
Then $Y$ is full and $\G'_\pi|Y$ is isomorphic to $\G_{(V,E)}$. 
Thus the \'etale groupoid $\G'_\pi$ is Kakutani equivalent to $\G_{(V,E)}$, 
and so 
\[
H_n(\G'_\pi)\cong\begin{cases}\Coker(I-B^t)&n=0\\
\Ker(I-B^t)&n=1\\0&n\geq2. \end{cases}
\]
The \'etale groupoid $\G_\pi$ is isomorphic to 
the direct product of two copies of $\G'_\pi$, and 
the homomorphisms 
$H^*_n(\pi):H_n(\G'_\pi)\to H_n(\G_\pi)=H_n(\G'_\pi)\oplus H_n(\G'_\pi)$ 
are given by $c\mapsto(c,c)$. 
Hence we get $H_n(\G_\pi/\G'_\pi)\cong H_n(\G'_\pi)$, 
where $H_n(\G_\pi/\G'_\pi)$ is as in Theorem \ref{LES_factor}. 
Therefore, Theorem \ref{LES_factor} yields the following exact sequence: 
\[
\xymatrix{
0 \ar[r] & \Ker(B^t+I) \ar[r] & 
{\Ker\begin{bmatrix}B^t&I\\I&B^t\end{bmatrix}} \ar[r] & \Ker(B^t-I) \ar[d] \\
0 & \Coker(B^t-I) \ar[l] & 
{\Coker\begin{bmatrix}B^t&I\\I&B^t\end{bmatrix}} \ar[l] & 
\Coker(B^t+I) \ar[l]
}
\]
\end{example}

\begin{example}
Let $(V,E)$, $(\V,\E)$ and $(\W,\F)$ be as in Example \ref{exoffactorSFT}. 
We consider a graph homomorphism $\rho:(\W,\F)\to(\V,\E)$ 
different from the previous one. 
Namely, we associate $(e,q)\in\F$ with $e\in\E$ and 
$(v,p,q)\in\F$ with $(v,p{+}q)\in\E$. 
We remark that $\rho$ has the following property: 
for any $\omega\in\W$, 
the map $\rho:i_{\F}^{-1}(\omega)\to i_{\E}^{-1}(\rho(\omega))$ 
is bijective. 
(This property is known as ``right-covering''. 
Compare with the map $\rho$ appearing in the previous example.)
Let $\tilde\rho:X_{(\W,\F)}\to X_{(\V,\E)}$ denote 
the induced continuous map between the infinite path spaces. 
It is easy to see that $\tilde\rho$ is surjective and two to one. 
We define $\tau:\F\to\Z_2$ by $\tau(\ep,q):=q$ 
for $\ep\in\E$ and $q\in\Z_2$. 
Then 
\[
(f_n)_n\mapsto\left(\tau(f_1),\tilde\rho((f_n)_n)\right)
\]
defines a homeomorphism $X_{(\W,\F)}\to\Z_2\times X_{(\V,\E)}$, 
which gives rise to a continuous injective homomorphism 
$\iota:\G_{(\W,\F)}\to\mathcal{K}\times\G_{(\V,\E)}$, 
where $\mathcal{K}:=\Z_2\times\Z_2$ is the trivial groupoid on $\Z_2$. 
Put $\G:=\mathcal{K}\times\G_{(\V,\E)}$ and $\G':=\G_{(\W,\F)}$. 
Identifying $\G'$ with $\iota(\G')$, 
we think of $\G'$ as a subgroupoid of $\G$. 
Clearly $\G'$ is open in $\G$, and 
\[
r(\G\setminus\G')=\Z_2\times\left\{(x_k)_k\in X_{(\V,\E)}\mid x_k\in E
\text{ for all sufficiently large $k$}\right\}. 
\]
One can verify that the inclusion $\G'\subset\G$ is regular. 
By Theorem \ref{Hisetale}, 
$\H:=\G|r(\G\setminus\G')$ and $\H':=\G'|r(\G\setminus\G')$ become 
\'etale groupoids. 
Define a compact open subset $Y\subset\H^{(0)}=r(\G\setminus\G')$ by 
\[
Y:=\{0\}\times\left\{(x_k)_k\in X_{(\V,\E)}
\mid x_k\in E\quad\forall k\in\N\right\}. 
\]
Then $Y$ is full and $\H|Y$ is isomorphic to $\G_{(V,E)}$, 
i.e. the \'etale groupoid $\H$ is Kakutani equivalent to $\G_{(V,E)}$. 
Also, 
$\H'$ is Kakutani equivalent to the product of two copies of $\G_{(V,E)}$. 
The homomorphisms $H_n(\H')\to H_n(\H)$ 
induced by the inclusion $\H'\subset\H$ are given 
by $(a,b)\mapsto a+b$. 
Hence we get $H_n(\H/\H')\cong H_{n-1}(\H)$, 
where $H_n(\H/\H')$ is as in Theorem \ref{LES_sub}. 
Therefore, Theorem \ref{LES_sub} yields the following exact sequence: 
\[
\xymatrix{
0 \ar[r] & \Ker(B^t-I) \ar[r] & 
{\Ker\begin{bmatrix}B^t&I\\I&B^t\end{bmatrix}} \ar[r] & \Ker(B^t+I) \ar[d] \\
0 & \Coker(B^t+I) \ar[l] & 
{\Coker\begin{bmatrix}B^t&I\\I&B^t\end{bmatrix}} \ar[l] & 
\Coker(B^t-I) \ar[l] 
}
\]
\end{example}

\section{Examples: hyperplane groupoids}

\subsection{Hyperplane groupoids}

In this subsection, we recall the notion of hyperplane systems 
from \cite{Pu10CMP} and introduce hyperplane groupoids. 

When $P\subset\R^N$ is an affine hyperplane of co-dimension one, 
it divides the space $\R^N$ into two closed half-spaces 
whose intersection is $P$. 
By an orientation of $P$, 
we mean that we have a fixed choice of labeling these as $P^0$ and $P^1$. 

\begin{definition}[{\cite[Definition 3.1]{Pu10CMP}}]
A hyperplane system is a triple $\mathcal{A}=(\R^N,L,\mathcal{P})$, 
where $L\subset\R^N$ is a finitely generated subgroup and 
$\mathcal{P}$ is a countable collection of 
co-dimension one oriented affine hyperplanes in $\R^N$ 
which is invariant under the action of $L$. 
That is, for each $P$ in $\mathcal{P}$ and $s$ in $L$, 
the translation $P+s$ is also in $\mathcal{P}$ 
and $(P+s)^i=P^i+s$ for $i=0,1$. 
\end{definition}

\begin{theorem}[{\cite[Section 3]{Pu10CMP}}]\label{hypergroupoid}
From a hyperplane system $(\R^N,L,\mathcal{P})$, 
we can construct a locally compact metrizable space $X$, 
an action $\phi:L\curvearrowright X$ and 
a continuous proper surjection $q:X\to\R^N$ satisfying the following. 
\begin{enumerate}
\item $q:X\to\R^N$ is a factor map, 
i.e.\ $q(\phi_s(x))=q(x)+s$ holds for all $x\in X$ and $s\in L$. 
\item For every $P\in\mathcal{P}$ and $i=0,1$, 
we let $[P^i]\subset X$denote the closure of $q^{-1}(P^i\setminus P)$. 
Then $[P^0]$ and $[P^1]$ do not have intersection, 
and their union equals $X$. 
\item The collection 
\[
\left\{[P^i]\mid P\in\mathcal{P},\ i=0,1\right\}
\cup\left\{q^{-1}(U)\mid\text{$U\subset\R^N$ is open in $\R^N$}\right\}
\]
is a subbasis of the topology of $X$. 
\end{enumerate}
\end{theorem}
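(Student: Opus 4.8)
The plan is to realise $X$ as the closure of a graph inside the product of $\R^N$ with a Cantor set indexing the two ``sides'' of each hyperplane, following \cite[Section 3]{Pu10CMP}. Enumerate $\mathcal{P}$ (it is countable) and put $\Omega:=\{0,1\}^{\mathcal{P}}$ with the product topology, a compact metrizable space. Since $\bigcup\mathcal{P}$ is a countable union of affine hyperplanes, it has measure zero, so $X_0:=\R^N\setminus\bigcup\mathcal{P}$ is dense in $\R^N$. For $P\in\mathcal{P}$ the sets $P^0\setminus P$ and $P^1\setminus P$ are the two open half-spaces bounded by $P$, and every $y\notin P$ lies in exactly one of them; let $e_P(y)\in\{0,1\}$ record which. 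Define $\iota:X_0\to\R^N\times\Omega$ by $\iota(y):=(y,(e_P(y))_{P\in\mathcal{P}})$, set $X:=\overline{\iota(X_0)}$ (closure in $\R^N\times\Omega$), and let $q:X\to\R^N$ be the restriction of the first-coordinate projection.

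First I would dispatch the elementary assertions. The space $\R^N\times\Omega$ is locally compact Hausdorff and metrizable, hence so is its closed subspace $X$, and $q$ is continuous as a restriction of a projection. For compact $K\subset\R^N$ one has $q^{-1}(K)=X\cap(K\times\Omega)$, a closed subset of the compact set $K\times\Omega$, hence compact; thus $q$ is proper, so in particular closed, and its image is closed and contains the dense set $X_0$, so $q$ is surjective. For the action, let $L$ act on $\R^N$ by translation and on $\Omega$ by $(s\cdot\omega)_P:=\omega_{P-s}$; the invariance $(P+s)^i=P^i+s$ gives $e_P(y+s)=e_{P-s}(y)$, so $\iota$ is equivariant for the diagonal $L$-action on $\R^N\times\Omega$. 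This action is by homeomorphisms and preserves $\iota(X_0)$ since $\mathcal{P}$, and hence $X_0$, is $L$-invariant; therefore it preserves $X=\overline{\iota(X_0)}$, yielding the action $\phi$, and $q(\phi_s(x))=q(x)+s$ is immediate, which is (1).

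The substantive points are (2) and (3). For $P\in\mathcal{P}$ and $i=0,1$, the set $Z_P^i:=\{(y,\omega)\in\R^N\times\Omega\mid\omega_P=i\}$ is clopen, with $Z_P^0\sqcup Z_P^1=\R^N\times\Omega$, so $X\cap Z_P^0$ and $X\cap Z_P^1$ are complementary clopen subsets of $X$. I would then prove $[P^i]=X\cap Z_P^i$. If a point of $X$ lies over $P^i\setminus P$, it is a limit of points $\iota(y_k)$ with $y_k\in X_0$ converging to it; since $P^i\setminus P$ is open in $\R^N$, eventually $y_k\in P^i\setminus P$, hence $e_P(y_k)=i$, forcing the $\Omega$-coordinate to satisfy $\omega_P=i$; taking closures gives $[P^i]\subseteq X\cap Z_P^i$. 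Conversely, if $(y,\omega)\in X\cap Z_P^i$ is the limit of such $\iota(y_k)$, then $e_P(y_k)\to i$ in the discrete set $\{0,1\}$, so $e_P(y_k)=i$, i.e.\ $y_k\in P^i\setminus P$, eventually, whence $(y,\omega)\in\overline{q^{-1}(P^i\setminus P)}=[P^i]$. Property (2) follows at once. For (3), a subbasis of $\R^N\times\Omega$ is given by the sets $U\times\Omega$ ($U\subseteq\R^N$ open) together with the sets $Z_P^i$; intersecting with $X$ produces the subbasis $\{q^{-1}(U)\}\cup\{X\cap Z_P^i\}=\{q^{-1}(U)\}\cup\{[P^i]\}$ of $X$, which is exactly (3).

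The hard part will be the identity $[P^i]=X\cap Z_P^i$ underlying (2): one inclusion rests on $P^i\setminus P$ being open in $\R^N$ and the other on the discreteness of $\{0,1\}$, and this is the only place where the geometry of the hyperplanes genuinely enters. Everything else is routine bookkeeping about closures and subbases in a metrizable product space.
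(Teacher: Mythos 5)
Your construction is correct and all three properties are verified soundly: the identity $[P^i]=X\cap Z_P^i$ is argued properly in both directions (openness of $P^i\setminus P$ for one inclusion, discreteness of $\{0,1\}$ for the other), and (3) then follows because intersecting a subbasis of $\R^N\times\Omega$ with the subspace $X$ yields a subbasis of $X$. Your route is, however, genuinely different from the one the paper sketches (following \cite[Section 3]{Pu10CMP}): there $X$ is defined as the Gelfand spectrum of the abelian $C^*$-subalgebra $\mathfrak{A}\subset L^\infty(\R^N)$ generated by $C_0(\R^N)$ together with the functions $f1_{P^0}$, and the stated properties are read off from the generators, while (1) comes from the translation-invariance of $\mathfrak{A}$ under $L$. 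You instead take the closure of the graph of the side-indicator map $\iota:X_0\to\R^N\times\{0,1\}^{\mathcal{P}}$ over the dense, $L$-invariant set $X_0=\R^N\setminus\bigcup\mathcal{P}$. The two spaces are naturally homeomorphic: every element of $\mathfrak{A}$ is continuous off $\bigcup\mathcal{P}$, which is meager and of measure zero, so restriction embeds $\mathfrak{A}$ into $C_b(X_0)$, and the spectrum of the resulting algebra is precisely the closure of the image of $X_0$ under evaluation of the generators, i.e.\ your $X$. Your approach buys a completely elementary, measure-theory-free argument in which the topology of $X$ (and the clopenness of the sets $[P^i]$) is visible from the outset; the $C^*$-picture buys the immediate identification $C_0(X)\cong\mathfrak{A}$ needed for the operator-algebraic applications. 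Two small points you should make explicit: $X_0$ is $L$-invariant (immediate from the $L$-invariance of $\mathcal{P}$), which is what guarantees that the diagonal action preserves $\iota(X_0)$ and hence its closure; and the metrizability of $\R^N\times\Omega$ is what licenses your use of sequences when passing to closures.
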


We would like to briefly describe how to construct the space $X$ 
in the theorem above.
Let $C_0(\R^N)$ denote the $C^*$-algebra of 
continuous functions on $\R^N$ vanishing at infinity. 
We regard $C_0(\R^N)$ as a subalgebra of 
the $C^*$-algebra $L^\infty(\R^N)$ consisting of 
essentially bounded measurable functions on $\R^N$. 
Let $\mathfrak{A}$ be the (abelian) $C^*$-algebra generated by 
\[
C_0(\R^N)\cup
\left\{f1_{P^0}\mid f\in C_0(\R^N),\ P\in\mathcal{P}\right\}
\]
in $L^\infty(\R^N)$. 
Then the space $X$ is defined as the Gelfand spectrum of $\mathfrak{A}$, 
that is, $\mathfrak{A}\cong C_0(X)$. 
Since $\mathfrak{A}$ contains $C_0(\R^N)$, 
there exists a continuous proper surjection $q:X\to\R^N$. 

\begin{definition}
Let $(\R^N,L,\mathcal{P})$ be a hyperplane system and 
let $\phi:L\curvearrowright X$ be as in the theorem above. 
We call the transformation groupoid $L\ltimes X$ 
the hyperplane groupoid arising from $(\R^N,L,\mathcal{P})$. 
\end{definition}

Notice that the space $X$ need not be totally disconnected. 
In fact, when $\mathcal{P}$ is empty, it is just $\R^N$. 
In the following argument, however, 
we are interested only in the case that $X$ is totally disconnected. 

The following is an example of hyperplane systems on $\R$. 
It is known that such one-dimensional systems are related to 
tiling spaces on $\R$. 
See \cite[Example 46]{FHK02CMP} for instances. 

\begin{example}\label{Denjoy}
Let $L\subset\R$ be the subgroup generated by $1$ and $\sqrt{2}$. 
Set 
\[
\mathcal{P}:=\left\{\{s\}\mid s\in L\right\}, 
\]
and $\{s\}^0:=(-\infty,s]$ and $\{s\}^1:=[s,\infty)$. 
Notice that a co-dimension one hyperplane of $\R$ is just a point. 
By the theorem above, 
we obtain $\phi:L\curvearrowright X$ and $q:X\to\R$. 
The quotient space of $X$ by the $\Z$-action generated by $\phi_1$ 
is a Cantor set, on which $\phi_{\sqrt{2}}$ naturally acts. 
In other words, 
the restriction of the hyperplane groupoid $\G:=L\ltimes X$ 
to $[\{0\}^1]\cap[\{1\}^0]$ is isomorphic to a Cantor minimal system 
arising from a Denjoy homeomorphism on the circle 
whose rotation number is $\sqrt{2}-1$. 
(Here, a homeomorphism on the circle is called a Denjoy homeomorphism 
when it has an irrational rotation number and 
is not conjugate to a rotation. 
Such a homeomorphism has a unique closed invariant set, 
which is homeomorphic to a Cantor set. 
We refer the reader to \cite{PSS86JOP} for the details.) 
So, $\G$ is Kakutani equivalent to a Cantor minimal system 
associated with a Denjoy homeomorphism. 
In particular, one has 
\[
H_n(\G)\cong\begin{cases}\Z^2&n=0\\\Z&n=1\\0&n\geq2. \end{cases}
\]
We also remark that this Kakutani equivalence implies 
the strong Morita equivalence between the $C^*$-algebras 
$C_0(X)\rtimes L$ and $C_0(X/L_1)\rtimes L_{\sqrt{2}}$, 
where $L_1:=\langle 1\rangle$ and 
$L_{\sqrt{2}}:=\langle\sqrt{2}\rangle$ 
(see \cite[Remark 3.10, 3.12]{Pu10CMP}). 
By using the notation of Theorem \ref{hypergroupoid} (2), 
the two generators of $H_0(\G)$ are given by the clopen subsets 
\[
[\{0\}^1]\cap[\{1\}^0]
\quad\text{and}\quad[\{0\}^1]\cap[\{\sqrt{2}\}^0]. 
\]
We denote the equivalence classes of them in $H_0(\G)$ 
by $\alpha$ and $\beta$. 
Thus, $H_0(\G)=\Z\alpha+\Z\beta$. 
\end{example}

\subsection{Example: the octagonal tiling}

In this subsection, we compute the homology groups of 
the hyperplane groupoid corresponding to the octagonal tiling. 
This computation is parallel to 
the arguments in \cite[Section 6]{Pu10CMP}, 
in which the $K$-groups of the $C^*$-algebras of 
the octagonal tiling are computed. 

First, 
we define a hyperplane system $\mathcal{A}=(\R^2,L,\mathcal{P})$ as follows. 
For $j=0,1,2,3$, we let $e_j:=(\cos j\pi/4,\sin j\pi/4)\in\R^2$ and 
let $L\subset\R^2$ be the abelian group generated by $(e_j)_j$. 
The group $L$ is isomorphic to $\Z^4$ and dense in $\R^2$. 
For $j=0,1,2,3$, 
we let $P_j\subset\R^2$ be the one-dimensional subspace spanned by $e_j$, 
and give the orientation as in Figure 1. 
\begin{figure}
\centering
\includegraphics[pagebox=cropbox,clip]{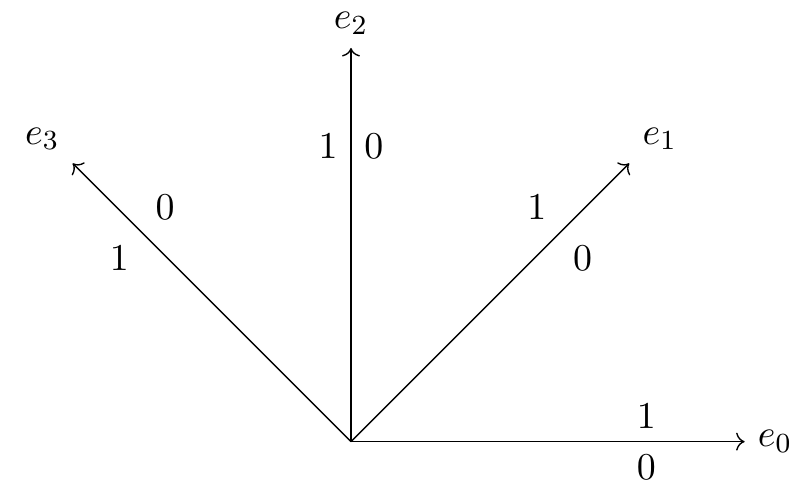}
\caption{the orientation of $P_j$}
\end{figure}

Set 
\[
\mathcal{P}_j:=\{P_j+s\mid s\in L\}, 
\]
and give the orientation for $P_j+s$ in $\mathcal{P}_j$ 
by $(P_j+s)^i=P_j^i+s$, $i=0,1$. 
Finally, we put $\mathcal{P}:=\bigcup_{j=0}^3\mathcal{P}_j$ and 
obtain the hyperplane system $\mathcal{A}:=(\R^2,L,\mathcal{P})$. 
Let $\G=L\ltimes X$ be the hyperplane groupoid arising from $\mathcal{A}$ 
and let $q:X\to\R^2$ be the factor map. 
This hyperplane system is equivalent to the octagonal tiling 
in an appropriate sense. 
See \cite{FHK02Memoirs,FHK02CMP,Pu10CMP} and the references therein. 
Our aim in this subsection is to compute the homology groups of $\G$. 

To this end, we need two hyperplane systems 
\[
\mathcal{A}_0:=(\R^2,L,\mathcal{P}_1\cup\mathcal{P}_2)
\]
and 
\[
\mathcal{A}_1:=(\R^2,L,\mathcal{P}_1\cup\mathcal{P}_2\cup\mathcal{P}_3). 
\]
The homology groups of the hyperplane groupoid of $\mathcal{A}_0$ 
are easily computed (Step 1). 
Applying the results in Section 4, 
we determine the homology groups of $\mathcal{A}_1$ in Step 2. 
Then, by using the results in Section 4 again, 
we compute the homology groups $H_n(\G)$ in Step 3. 
This is our strategy. 

For $i=0,1$, by Theorem \ref{hypergroupoid}, 
we obtain an action $\phi_i:L\curvearrowright X_i$ and 
a factor map $q_i:X_i\to\R^2$ such that the diagram below is commutative. 
Let $\G_i:=L\ltimes X_i$ be the hyperplane groupoid. 
\[
\xymatrix@M=10pt{
X \ar[rrd]_-q \ar[r] & X_1 \ar[rd]^-{q_1} \ar[r] & X_0 \ar[d]^-{q_0} \\
& & \R^2
}
\]

\paragraph{Step 1}

First, we look at $\mathcal{A}_0=(\R^2,L,\mathcal{P}_1\cup\mathcal{P}_2)$ 
and compute $H_n(\G_0)$. 
The abelian group $L\cong\Z^4$ is freely generated 
by $e_1$, $e_2$, $e_0{+}e_2$ and $e_1{+}e_3$. 
Consider the quotient space $\tilde X_0$ of $X_0$ 
by the $\Z^2$-action induced by $e_1,e_2$. 
Then, as in Example \ref{Denjoy}, 
$\tilde X_0$ is a Cantor set 
on which $\langle e_0{+}e_2,e_1{+}e_3\rangle\cong\Z^2$ acts. 
It is easy to see that 
this dynamical system is conjugate to the product of 
two copies of the Denjoy system described in Example \ref{Denjoy}. 
Therefore, one gets 
\[
H_n(\G_0)\cong\begin{cases}\Z^4&n=0\\\Z^4&n=1\\\Z&n=2\\0&n\geq3. \end{cases}
\]
By using the notation of Example \ref{Denjoy}, 
the generators of $H_0(\G_0)=\Z^2\otimes\Z^2$ can be written by 
$\alpha\otimes\alpha$, $\alpha\otimes\beta$, $\beta\otimes\alpha$ 
and $\beta\otimes\beta$.

\paragraph{Step 2}

Next, we discuss the hyperplane system 
$\mathcal{A}_1=(\R^2,L,\mathcal{P}_1\cup\mathcal{P}_2\cup\mathcal{P}_3)$. 
There exists a factor map $p:X_1\to X_0$, 
which gives rise to a continuous homomorphism $\pi:\G_1\to\G_0$. 
Obviously $\pi$ is surjective and proper. 
For any $y\in X_0$, one has $\#p^{-1}(y)$ is $1$ or $2$. 
When $\#p^{-1}(y)=2$, 
there exists a unique $P\in\mathcal{P}_3$ such that 
$p^{-1}(y)=\{x_0,x_1\}$ and $x_i\in[P^i]$ for $i=0,1$. 
Suppose that a sequence $(y_n)_n$ converges to $y$ in $X_0$. 
Then, for any $P'\in\mathcal{P}_3\setminus\{P\}$, we can see that 
$p^{-1}(y_n)$ and $x_i$ are the same side of $P'$ eventually. 
This implies that $p:X_1\to X_0$ is regular, 
and so is $\pi:\G_1\to\G_0$ by Lemma \ref{pi^nregular}. 
The reader may refer to \cite[Proposition 3.5]{Pu10CMP} 
for the topologies of $X_0$ and $X_1$ 
(see also the maps $i^j_P$ introduced in \cite[Section 4]{Pu10CMP}). 
Now, we would like to apply the results of Section 4 for $\pi$. 
Let us look at the space $(\G_0)_\pi^{(0)}$. 
Under the identification of $\G_0^{(0)}$ with $X_0$, 
this space can be written as 
\[
Y_0:=\{x\in X_0\mid \#p^{-1}(x)=2\}. 
\]
Clearly $q_0(Y_0)\subset\R^2$ is equal to 
\[
\bigcup_{P\in\mathcal{P}_3}P=\bigcup_{s\in L}(P_3+s), 
\]
and $Y_0$ (with the topology described in Definition \ref{defofregular}) 
is homeomorphic to the product of $Y_0\cap q_0^{-1}(P_3)$ and 
a countable discrete set. 
Put $\theta:=1/\sqrt{2}$ and 
\[
C:=\{l+m\theta\in\R\mid l,m\in\Z\}. 
\]
When $z$ is in $P_3\setminus\{ce_3\mid c\in C\}$, 
$q_0^{-1}(z)$ is a singleton and belongs to $Y_0$. 
When $z$ is in $\{ce_3\mid c\in C\}$, 
there exists $s_j\in L$ such that $z\in P_j+s_j$ 
for each $j=1,2$, 
and $q_0^{-1}(z)$ consists of four points 
$x_{0,0},x_{0,1},x_{1,0},x_{1,1}$ 
satisfying $x_{k,l}\in[(P_1+s_1)^k]\cap[(P_2+s_2)^l]$. 
Among these points, $x_{0,0}$ and $x_{1,1}$ are in $Y_0$, 
and the others are not. 

Now, the map $p:X_1\to X_0$ is two-to-one on $Y_0$, 
and the restriction of the \'etale groupoid $(\G_0)_\pi$ 
to $Y_0\cap q_0^{-1}(P_3)$ is isomorphic 
to the hyperplane groupoid arising from the hyperplane system 
\[
\mathcal{B}:=(\R,M,\mathcal{Q}), 
\]
where $M:=\langle1,2\theta\rangle\cong\Z^2$ and 
$\mathcal{Q}:=\{\{c\}\mid c\in C\}$. 
Accordingly, $(\G_0)_\pi$ is Kakutani equivalent 
to the hyperplane groupoid of $\mathcal{B}$. 
We remark that 
this procedure of constructing $\mathcal{B}$ is called 
the reduction of hyperplane systems in \cite[Section 4]{Pu10CMP}. 
In a similar fashion to Example \ref{Denjoy}, 
one has 
\[
H_n((\G_0)_\pi)\cong\begin{cases}\Z^3&n=0\\\Z&n=1\\0&n\geq2. \end{cases}
\]
Note that $C=\Z+\Z\theta$ splits into two orbits under the action of $M$, 
namely $\Z+2\Z\theta$ and $\Z+(2\Z{+}1)\theta$. 
Hence $H_0$ of $\mathcal{B}$ is of rank three, 
and the three generators are given by the clopen subsets 
\[
[\{0\}^1]\cap[\{1\}^0],\quad [\{0\}^1]\cap[\{\theta\}^0]
\quad\text{and}\quad[\{0\}^1]\cap[\{2\theta\}^0]. 
\]
The \'etale groupoid $(\G_1)_\pi$ is 
the product of two copies of $(\G_0)_\pi$ 
and the homomorphisms $H_n((\G_0)_\pi)\to H_n((\G_1)_\pi)$ 
are given by $a\mapsto(a,a)$. 
Therefore, Theorem \ref{LES_factor} yields the following exact sequence: 
\[
\xymatrix{
0 \ar[r] & H_2(\G_0) \ar[r] & 
H_2(\G_1) \ar[r] & 0 \ar[d] \\
 & \Z \ar[d]_-\partial & 
H_1(\G_1) \ar[l] & H_1(\G_0) \ar[l] \\
 & H_0(\G_0) \ar[r] & 
H_0(\G_1) \ar[r] & 
\Z^3 \ar[r] & 0. 
}
\]
In order to determine $H_n(\G_1)$, 
it suffices to know the map $\partial:\Z\to H_0(\G_0)\cong\Z^4$. 
Roughly speaking, 
the generator $a$ of $H_1((\G_0)_\pi)\cong\Z$ is given by 
the $2\theta$ translation of the unit interval modulo $\Z$. 
See Figure 2. 
\begin{figure}
\centering
\medskip
\includegraphics[pagebox=cropbox,clip]{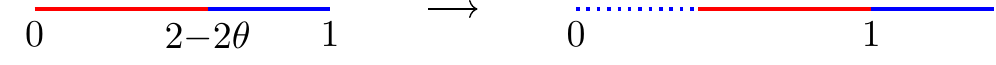}
\caption{$2\theta$ translation modulo $\Z$}
\end{figure}
Then, its `lift' to $C_c(\G_1,\Z)$ is 
the $(2\theta{-}1)e_3$ translation of the triangle 
described in Figure 3. 
\begin{figure}[h]
\centering
\medskip
\includegraphics[pagebox=cropbox,clip]{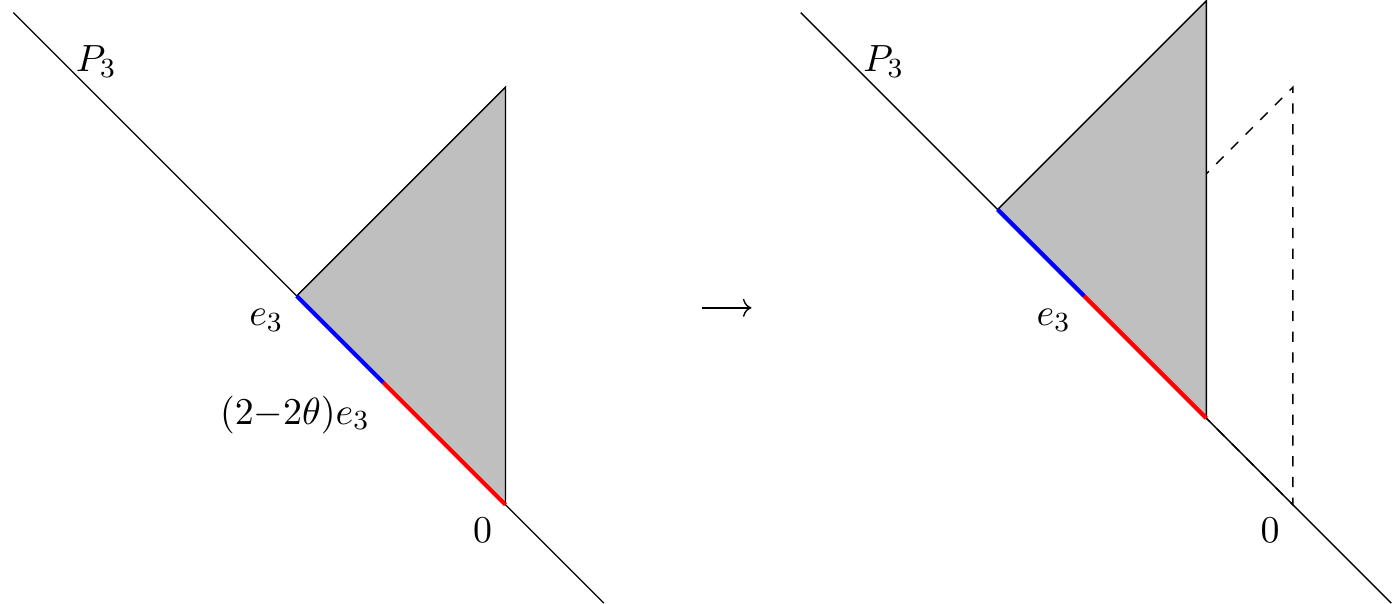}
\caption{$(2\theta{-}1)e_3$ translation of triangle}
\end{figure}
Here, the gray triangle in the left side of Figure 3 represents 
a clopen subset of $\G_1^{(0)}$, which can be written as 
\[
[P_3^0]\cap[P_2^1]\cap[P_1^0+e_3]
\]
by using the notation of Theorem \ref{hypergroupoid} (2). 
Now, $\partial(a)\in H_0(\G_0)$ corresponds to 
the `difference' of the two gray triangles. 
Figure 4 illustrates it. 
\begin{figure}[h]
\centering
\medskip
\includegraphics[pagebox=cropbox,clip]{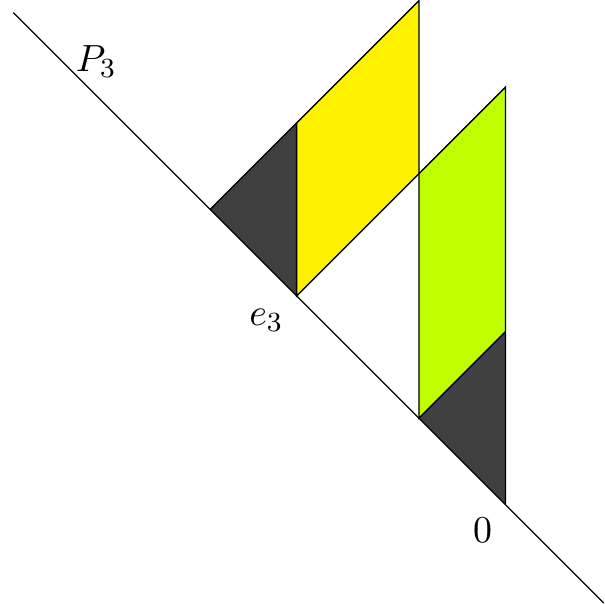}
\caption{`difference' of two triangles}
\end{figure}
The two black triangles cancel each other by the $e_3$ translation. 
Hence $\partial(a)\in H_0(\G_0)$ is equal to 
the equivalence class of the yellow parallelogram 
minus that of the green parallelogram. 
Notice that all the sides of these parallelograms are 
parallel to either $P_1$ or $P_2$, 
which means that they are well-defined as clopen subsets of $\G_0^{(0)}$. 
The yellow one is actually a rhombus, whose sides are $2-2\theta$. 
It follows that 
the equivalence class in $H_0(\G)$ of the yellow rhombus is 
\begin{align*}
& \left(2\alpha-\beta\right)\otimes\left(2\alpha-\beta\right)\\
&=4\left(\alpha\otimes\alpha\right)-2\left(\alpha\otimes\beta\right)
-2\left(\beta\otimes\alpha\right)+\beta\otimes\beta. 
\end{align*}
The sides of the green one are $-1+2\theta$ and $2(-1+2\theta)$, 
and so its equivalence class is 
\begin{align*}
& \left(-\alpha+\beta\right)\otimes\left(-2\alpha+2\beta\right)\\
&=2\left(\alpha\otimes\alpha\right)-2\left(\alpha\otimes\beta\right)
-2\left(\beta\otimes\alpha\right)
+2\left(\beta\otimes\beta\right). 
\end{align*}
Consequently, 
\[
\partial(a)=2\left(\alpha\otimes\alpha\right)-\beta\otimes\beta, 
\]
which implies that $H_0(\G_0)/\Im\partial$ is isomorphic to $\Z^3$. 
This, together with the computation in Step 1, yields the following: 
\[
H_n(\G_1)\cong\begin{cases}\Z^6&n=0\\\Z^4&n=1\\\Z&n=2\\0&n\geq3. \end{cases}
\]

\paragraph{Step 3}

Finally, we discuss the hyperplane system 
$\mathcal{A}=(\R^2,L,\mathcal{P})$. 
There exists a factor map $p:X\to X_1$, 
which gives rise to a continuous homomorphism $\pi:\G\to\G_1$. 
It is routine to check that $\pi$ is surjective, proper and regular. 
We would like to apply the results of Section 4 for $\pi$. 
Let $\mathcal{B}=(\R,M,\mathcal{Q})$ be as in Step 2. 
In the same way as Step 2, 
one can see that 
the \'etale groupoid $(\G_1)_\pi$ is Kakutani equivalent 
to the hyperplane groupoid arising from $\mathcal{B}$. 
Furthermore, the \'etale groupoid $\G_\pi$ is 
the product of two copies of $(\G_1)_\pi$ 
and the homomorphisms $H_n((\G_1)_\pi)\to H_n(\G_\pi)$ 
are given by $a\mapsto(a,a)$. 
Therefore, Theorem \ref{LES_factor} yields the following exact sequence: 
\[
\xymatrix{
0 \ar[r] & H_2(\G_1) \ar[r] & 
H_2(\G) \ar[r] & 0 \ar[d] \\
 & \Z \ar[d]_-\partial & 
H_1(\G) \ar[l] & H_1(\G_1) \ar[l] \\
 & H_0(\G_1) \ar[r] & 
H_0(\G) \ar[r] & 
\Z^3 \ar[r] & 0. 
}
\]
In order to determine $H_n(\G)$, 
it suffices to know the map $\partial:\Z\to H_0(\G_1)$. 
In the same way as Step 2, 
we let $a\in H_1((\G_1)_\pi)\cong\Z$ be the generator. 
Consider the clopen subset 
\[
[P_0^1]\cap[P_1^0]\cap[P_3^1+e_0]
\]
of $\G^{(0)}=X$, which corresponds to the triangle 
with vertices $(0,0)$, $(1,0)$ and $(1/2,1/2)$. 
We look at the $(2\theta{-}1)e_0$ translation of this triangle. 
As shown in Figure 5, 
the two black triangles cancel each other by the $e_0$ translation. 
\begin{figure}
\centering
\includegraphics[pagebox=cropbox,clip]{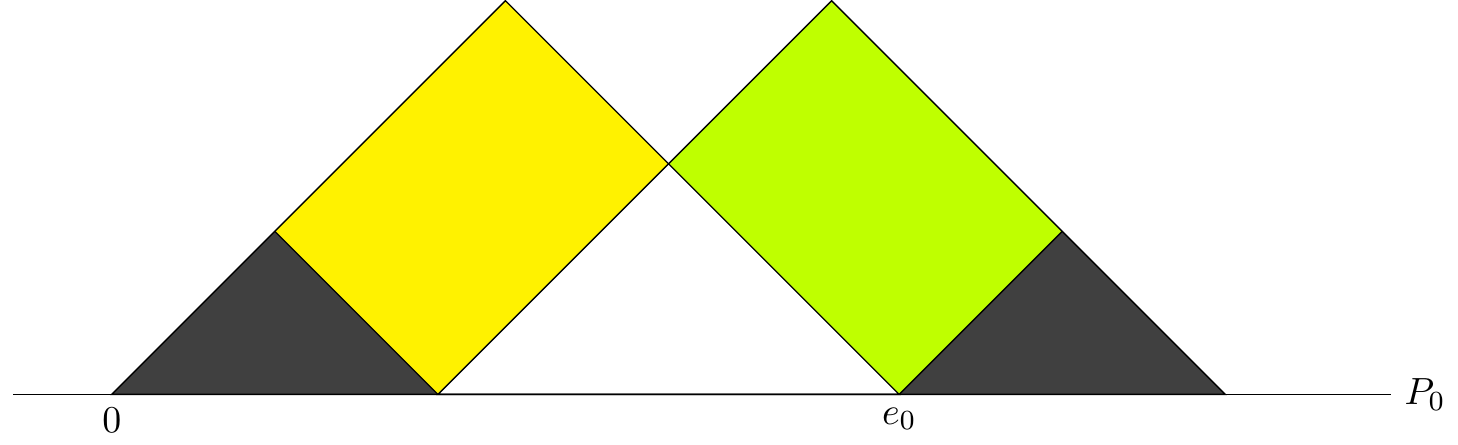}
\caption{`difference' of two triangles}
\end{figure}
Hence $\partial(a)\in H_0(\G_1)$ is equal to 
the equivalence class of the yellow rectangle 
minus that of the green rectangle. 
Now, this element is zero in $H_0(\G_1)$, 
which can be seen from Figure 6. 
\begin{figure}
\centering
\includegraphics[pagebox=cropbox,clip]{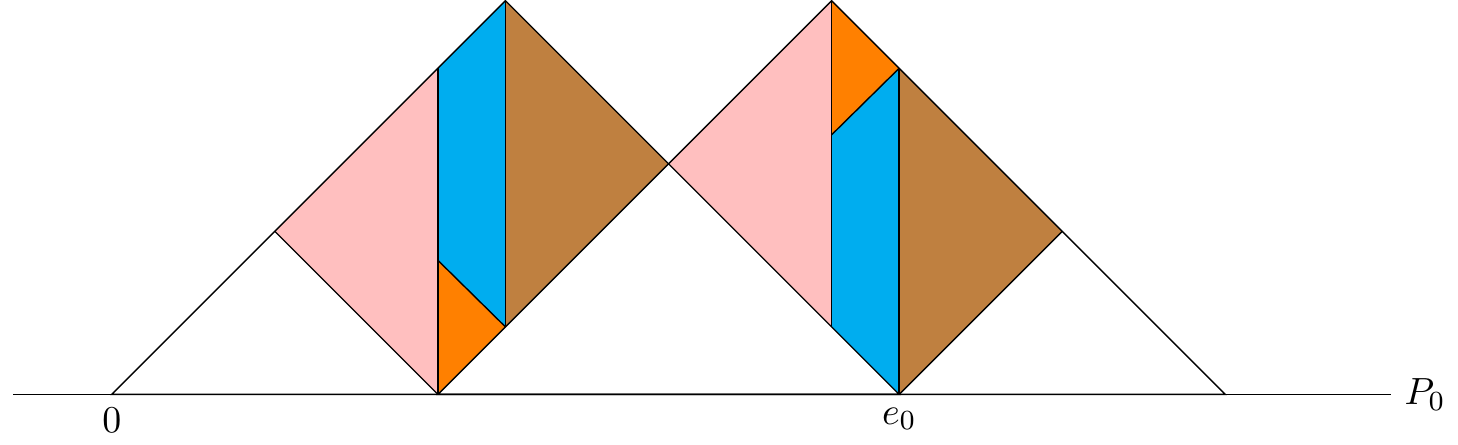}
\caption{a puzzle}
\end{figure}
This, together with the computation in Step 2, yields the following: 
\[
H_n(\G)\cong\begin{cases}\Z^9&n=0\\\Z^5&n=1\\\Z&n=2\\0&n\geq3. \end{cases}
\]
This computation result agrees with that given in 
\cite[Example 6.4]{FHK02CMP}, \cite[Section 10]{KP00CRM}.

\subsection{Example: the Penrose tiling}

In this subsection, we compute the homology groups of 
the hyperplane groupoid corresponding to the Penrose tiling. 

First, 
we define a hyperplane system $\mathcal{A}=(\R^2,L,\mathcal{P})$ as follows. 
For $j=0,1,2,3,4$, we let $e_j:=(\cos j\pi/5,\sin j\pi/5)\in\R^2$ and 
let $L\subset\R^2$ be the abelian group generated by $(e_j)_j$. 
Notice that $e_0-e_1+e_2-e_3+e_4=0$. 
The group $L$ is isomorphic to $\Z^4$ and dense in $\R^2$. 
For $j=0,1,2,3,4$, 
we let $P_j\subset\R^2$ be the one-dimensional subspace spanned by $e_j$, 
and give the orientation as in Figure 7. 
\begin{figure}[h]
\centering
\medskip
\includegraphics[pagebox=cropbox,clip]{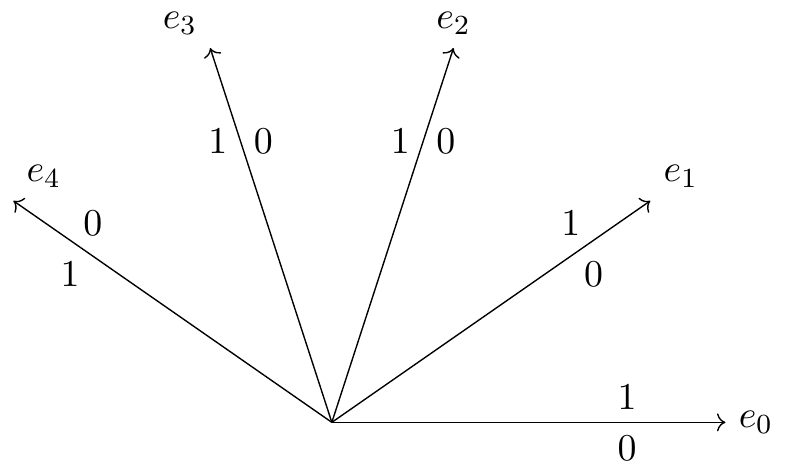}
\caption{the orientation of $P_j$}
\end{figure}

In the same way as Section 6.2, we set 
\[
\mathcal{P}_j:=\{P_j+s\mid s\in L\}, 
\]
and give the orientation for $P_j+s$ in $\mathcal{P}_j$ 
by $(P_j+s)^i=P_j^i+s$, $i=0,1$. 
Finally, we put $\mathcal{P}:=\bigcup_{j=0}^4\mathcal{P}_j$ and 
obtain the hyperplane system $\mathcal{A}:=(\R^2,L,\mathcal{P})$. 
Let $\G=L\ltimes X$ be the hyperplane groupoid arising from $\mathcal{A}$ 
and let $q:X\to\R^2$ be the factor map. 
This hyperplane system is equivalent to the Penrose tiling 
in an appropriate sense. 
See \cite{FHK02Memoirs,FHK02CMP,Pu10CMP} and the references therein. 
Our aim in this subsection is to compute the homology groups of $\G$. 

To this end, we need two hyperplane systems 
\[
\mathcal{A}_0:=(\R^2,L,\mathcal{P}_1\cup\mathcal{P}_3)
\]
and 
\[
\mathcal{A}_1:=(\R^2,L,
\mathcal{P}_1\cup\mathcal{P}_2\cup\mathcal{P}_3\cup\mathcal{P}_4). 
\]
In the same way as the octagonal tiling, 
by using the results in Section 4 twice, 
we will compute the homology groups. 
For $i=0,1$, by Theorem \ref{hypergroupoid}, 
we obtain an action $\phi_i:L\curvearrowright X_i$ and 
a factor map $q_i:X_i\to\R^2$. 
Let $\G_i:=L\ltimes X_i$ be the hyperplane groupoid. 

Put $\theta:=(\sqrt{5}-1)/2$. 
Define a one-dimensional hyperplane system $\mathcal{B}:=(\R,M,Q)$ 
by $M:=\langle1,\theta\rangle$ and $Q:=\{\{c\}\mid c\in M\}$. 
Let $\H$ be the hyperplane groupoid of $\mathcal{B}$. 
In the same way as Example \ref{Denjoy}, we have 
\[
H_n(\H)\cong\begin{cases}\Z^2&n=0\\\Z&n=1\\0&n\geq2. \end{cases}
\]
By using the notation of Theorem \ref{hypergroupoid} (2), 
the two generators of $H_0(\H)$ are given by the clopen subsets 
\[
[\{0\}^1]\cap[\{1\}^0]
\quad\text{and}\quad[\{0\}^1]\cap[\{\theta\}^0]. 
\]
We denote the equivalence classes of them in $H_0(\H)$ 
by $\alpha$ and $\beta$. 
Thus, $H_0(\H)=\Z\alpha+\Z\beta$.

\paragraph{Step 1}

First, we look at $\mathcal{A}_0=(\R^2,L,\mathcal{P}_1\cup\mathcal{P}_3)$ 
and compute $H_n(\G_0)$. 
The abelian group $L\cong\Z^4$ is freely generated 
by $e_1$, $e_3$, $e_0{+}e_2$ and $e_2{+}e_4$, 
and $e_0{+}e_2=(1{+}\theta)e_1$ and $e_2{+}e_4=(1{+}\theta)e_3$. 
Then, it is not so hard to see that 
$\G_0$ is isomorphic to $\H\otimes\H$. 
Therefore, one gets 
\[
H_n(\G_0)\cong\begin{cases}\Z^4&n=0\\\Z^4&n=1\\\Z&n=2\\0&n\geq3, \end{cases}
\]
and the generators of $H_0(\G_0)=\Z^2\otimes\Z^2$ can be written by 
$\alpha\otimes\alpha$, $\alpha\otimes\beta$, $\beta\otimes\alpha$ 
and $\beta\otimes\beta$.

\paragraph{Step 2}

Next, we discuss the hyperplane system 
$\mathcal{A}_1
=(\R^2,L,\mathcal{P}_1\cup\mathcal{P}_2
\cup\mathcal{P}_3\cup\mathcal{P}_4)$. 
There exists a factor map $p:X_1\to X_0$, 
which gives rise to a continuous homomorphism $\pi:\G_1\to\G_0$. 
In exactly the same way as the Octagonal tiling, 
one can check that $\pi$ is surjective, proper and regular. 
We would like to apply the results of Section 4 for $\pi$. 

The \'etale groupoid $(\G_0)_\pi$ is Kakutani equivalent to $\H\oplus\H$, 
where the first direct summand corresponds to the line $P_2$ and 
the second corresponds to $P_4$. 
The map $p:X_1\to X_0$ is two-to-one on the unit space of $(\G_0)_\pi$, 
and $(\G_1)_\pi$ is Kakutani equivalent to 
$\H\oplus\H\oplus\H\oplus\H$. 
The homomorphisms $H_n((\G_0)_\pi)\to H_n((\G_1)_\pi)$ 
are given by $(a,b)\mapsto(a,a,b,b)$. 
Therefore, Theorem \ref{LES_factor} yields the following exact sequence: 
\[
\xymatrix{
0 \ar[r] & H_2(\G_0) \ar[r] & 
H_2(\G_1) \ar[r] & 0 \ar[d] \\
 & \Z^2 \ar[d]_-\partial & 
H_1(\G_1) \ar[l] & H_1(\G_0) \ar[l] \\
 & H_0(\G_0) \ar[r] & 
H_0(\G_1) \ar[r] & 
\Z^4 \ar[r] & 0. 
}
\]
In order to determine $H_n(\G_1)$, 
it suffices to know the map $\partial:\Z^2\to H_0(\G_0)\cong\Z^4$. 
Roughly speaking, 
the generator $a$ of $H_1(\H)\cong\Z$ is given by 
the $\theta$ translation of the unit interval modulo $\Z$. 
In the same way as the previous subsection, 
the images of $(a,0)$ and $(0,a)$ under $\partial$ are 
given by `difference' of triangles, 
which is illustrated in Figure 8. 
\begin{figure}
\centering
\includegraphics[pagebox=cropbox,clip]{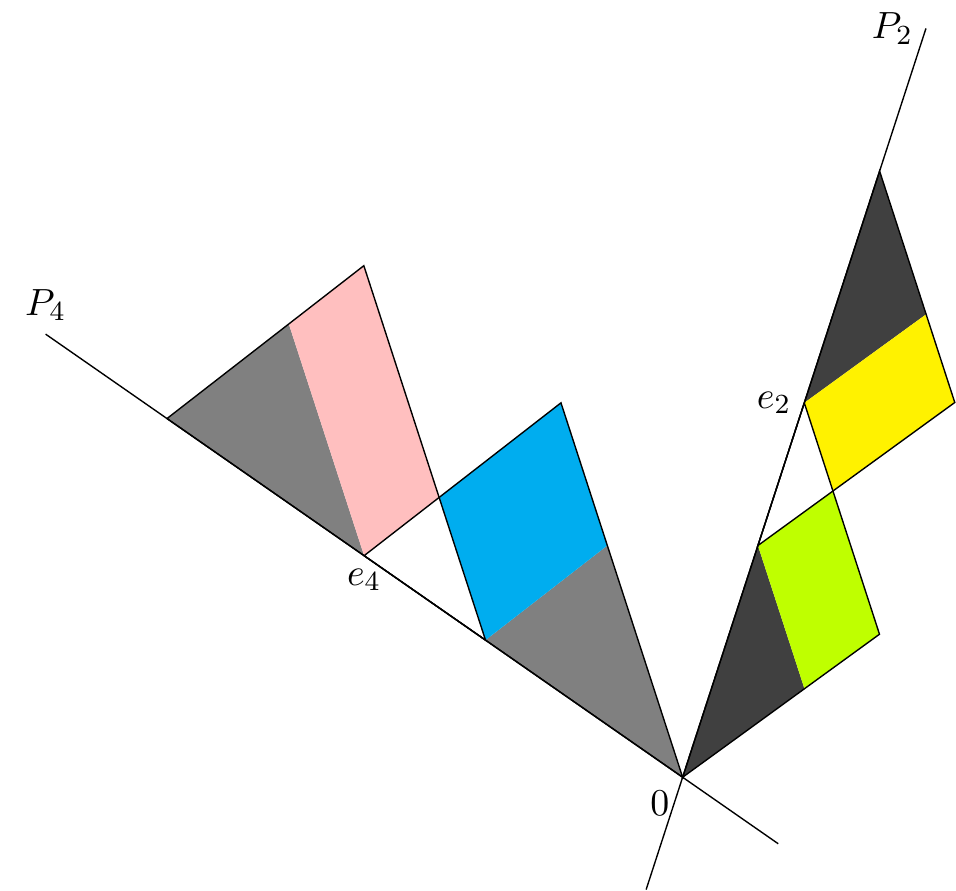}
\caption{`difference' of triangles}
\end{figure}
By looking at the sides of the parallelograms, one gets 
\begin{align*}
\partial(a,0)
&=(\alpha-\beta)\otimes(-\alpha+2\beta)
-(-\alpha+2\beta)\otimes(\alpha-\beta)\\
&=\alpha\otimes\beta-\beta\otimes\alpha
\end{align*}
and 
\begin{align*}
\partial(0,a)
&=(-\alpha+2\beta)\otimes\beta
-(\alpha-\beta)\otimes(\alpha-\beta)\\
&=-\alpha\otimes\alpha+\beta\otimes\alpha+\beta\otimes\beta. 
\end{align*}
Hence the cokernel of $\partial$ is $\Z^2$. 
This, together with the computation in Step 1, yields the following: 
\[
H_n(\G_1)\cong\begin{cases}\Z^6&n=0\\\Z^4&n=1\\\Z&n=2\\0&n\geq3. \end{cases}
\]

\paragraph{Step 3}

Finally, we discuss the hyperplane system 
$\mathcal{A}=(\R^2,L,\mathcal{P})$. 
There exists a factor map $p:X\to X_1$, 
which gives rise to a continuous homomorphism $\pi:\G\to\G_1$. 
It is routine to check that $\pi$ is surjective, proper and regular. 
We would like to apply the results of Section 4 for $\pi$. 
The \'etale groupoid $(\G_1)_\pi$ is Kakutani equivalent to $\H$. 
The map $p:X\to X_1$ is two-to-one on the unit space of $(\G_1)_\pi$, and so 
$(\G_1)_\pi$ is Kakutani equivalent to $\H\oplus\H$. 
The homomorphisms $H_n((\G_1)_\pi)\to H_n(\G)$ is given by $c\mapsto(c,c)$. 
Therefore, Theorem \ref{LES_factor} yields the following exact sequence: 
\[
\xymatrix{
0 \ar[r] & H_2(\G_1) \ar[r] & 
H_2(\G) \ar[r] & 0 \ar[d] \\
 & \Z \ar[d]_-\partial & 
H_1(\G) \ar[l] & H_1(\G_1) \ar[l] \\
 & H_0(\G_1) \ar[r] & 
H_0(\G) \ar[r] & 
\Z^2 \ar[r] & 0. 
}
\]
Let us show that $\partial:\Z\to H_0(\G_1)$ is trivial. 
Once this is done, we can determine $H_n(\G)$. 
The image of the generator of $H_0(\H)\cong\Z$ under $\partial$ 
is described in Figure 9. 
\begin{figure}
\centering
\includegraphics[pagebox=cropbox,clip]{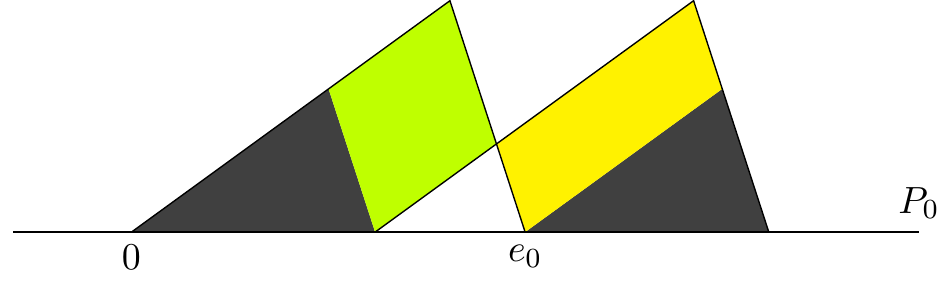}
\caption{`difference' of triangles}
\end{figure}
We would like to show that the equivalence class of 
the yellow parallelogram minus the green parallelogram is zero in $H_0(\G_1)$. 
This can be done in a similar fashion to Figure 6, 
by solving a puzzle. 
However, we take a different approach. 
Let us examine the equivalence class of that element in $H_0(\G_0)$. 
By looking at the sides of the parallelograms, one gets 
\begin{align*}
& \beta\otimes(-\alpha+2\beta)
-(\alpha-\beta)\otimes(\alpha-\beta)\\
&=-\alpha\otimes\alpha+\alpha\otimes\beta+\beta\otimes\beta. 
\end{align*}
This is equal to $\partial(a,0)+\partial(0,a)$ that appeared in Step 2, 
and so its class in $H_0(\G_1)$ is zero. 
Consequently, we have 
\[
H_n(\G_1)\cong\begin{cases}\Z^8&n=0\\\Z^5&n=1\\\Z&n=2\\0&n\geq3. \end{cases}
\]
This computation result agrees with that given 
in \cite[Section 10.4]{AP98ETDS}.

\section*{Acknowledgement}

The author thanks the anonymous referees 
for their very detailed reading and 
for giving several valuable comments 
that significantly helped to improve the previous version of this paper.

\newcommand{\noopsort}[1]{}

\end{document}